\documentclass[11pt,oneside,reqno]{amsart}

\usepackage[margin=1in]{geometry}
\usepackage{amsmath,amssymb,amsthm,textcomp}
\usepackage{amsfonts,graphicx}
\usepackage{amsaddr}
\usepackage[mathscr]{eucal}
\usepackage{pgfplots}
\pgfplotsset{compat=1.15}
\usepackage{subfig,tikz}
\usepackage{array} 
\usepackage{booktabs}
\usepackage{multicol}
\usepackage{multirow}
\usepackage[utf8]{inputenc}
\usepackage{bm}

\pagestyle{plain}
\usepackage{color}
 
\usepackage{float}
\usepackage{hyperref}
 \hypersetup{colorlinks,citecolor=blue}
\hypersetup{colorlinks=true,linkcolor=red,filecolor=magenta,    urlcolor=blue}

\usepackage{pslatex}
 \usepackage{array}
\usepackage[square,numbers]{natbib}
\bibliographystyle{plainnat}


\allowdisplaybreaks[1]
\theoremstyle{definition}

\interdisplaylinepenalty=0




\newcommand{\ncom}{\newcommand}

\ncom{\integ}[4]{\int_{#1}^{#2}\,{#3}\,d{#4}}
\ncom{\vspan}[1]{{{\rm\,span}\{ #1 \}}}
\ncom{\dm}[1]{ {\displaystyle{#1} } }
\ncom{\ri}[1]{{#1} \index{#1}}

\newtheorem{theorem}{\bf Theorem}[section]
\newtheorem{remark}{\bf Remark}[section]
\newtheorem{proposition}{Proposition}[section]
\newtheorem{lemma}{Lemma}[section]
\newtheorem{corollary}{Corollary}[section]

\newtheoremstyle
{remarkstyle}
{}
{11pt}
{}
{}
{\bfseries}
{:}
{     }
{\thmname{#1} \thmnumber{#2} }

\theoremstyle{remarkstyle}



\def\t{\tau}

\DeclareFontFamily{U}{BOONDOX-calo}{\skewchar\font=45 }
\DeclareFontShape{U}{BOONDOX-calo}{m}{n}{
  <-> s*[1.05] BOONDOX-r-calo}{}
\DeclareFontShape{U}{BOONDOX-calo}{b}{n}{
  <-> s*[1.05] BOONDOX-b-calo}{}
\DeclareMathAlphabet{\mathcalboondox}{U}{BOONDOX-calo}{m}{n}
\SetMathAlphabet{\mathcalboondox}{bold}{U}{BOONDOX-calo}{b}{n}
\DeclareMathAlphabet{\mathbcalboondox}{U}{BOONDOX-calo}{b}{n}

\begin{document}
\color{black}       
\title{Martingale Characterizations of Non-Homogeneous Counting Processes and Their Fractional Variants}



\author{Kartik Tathe$^1$ and Sayan Ghosh$^2$}
\address{Department of Mathematics, Birla Institute of Technology and Science, Pilani, Hyderabad Campus, Jawahar Nagar, Kapra Mandal, Medchal District, Telangana 500078, India. }
\thanks{$^2$Corresponding author}
\email{$^1$kartikvtathe@gmail.com,$^2$sayan@hyderabad.bits-pilani.ac.in}





\subjclass[2020]{Primary: 60G22, 60G55; Secondary: 60G20, 60G51}
\keywords{non-homogeneous counting process, martingale characterization, subordination, Poisson process}
\begin{abstract}
This paper investigates the martingale characterizations of non-homogeneous counting processes and their fractional generalizations. We show that a weighted sum of non-homogeneous Poisson processes (NPPs) is the non-homogeneous generalized counting process (NGCP). Both the compensated and exponential forms of martingale characterization for NGCP are obtained, and are shown to be equivalent. Moreover, we provide martingale characterizations for various time-changed variants of the NGCP and their Skellam versions using stable and/or inverse stable subordinators.
\end{abstract}

\maketitle


\section{Introduction}
Counting processes play a fundamental role in stochastic modeling and have wide-ranging applications in queueing theory, reliability analysis, finance, and insurance mathematics. Among these, the Poisson process stands out as the most prominent and well-studied example. A non-negative integer-valued stochastic process $\{N(t)\}_{t\geq 0}$ satisfying $N(t_1) \leq N(t_2)$ for $t_1 < t_2$ with independent and stationary increments is called a Poisson process with rate $\lambda>0$ if $N(t)$ has a Poisson distribution with parameter $\lambda t$. If we replace $\lambda$ with an appropriate function of time $\lambda(t)$, we obtain the non-homogeneous Poisson process (NPP). Some applications of NPP include modeling call arrivals in telecommunication networks, traffic flow and traffic accidents and software failures which may occur at different rates during various time periods. 

Fractional and generalized extensions of the Poisson process have been widely explored in the literature to describe systems exhibiting non-Markovian features and over-dispersion. \citet{LASKIN2003201} introduced the time fractional Poisson process (TFPP) by replacing the ordinary derivative in the Kolmogorov-Feller equations governing the process with a Riemann-Liouville fractional derivative, while \citet{Meerschaert2011} obtained the same process by time changing (replacing the time variable) the Poisson process with an independent inverse stable subordinator. This fractional modification allows for non-stationary and dependent increments. Later, \citet{Leonenko2017} introduced a non-homogeneous version of TFPP. Allowing flexibility of various jump sizes (multiple arrivals) with different intensities, \citet{Crescenzo2016} introduced the generalized fractional counting process (GFCP). The non-homogeneous extension called the NGFCP and its special case NGCP were subsequently investigated by \citet{Kataria2025}. \citet{Orsingher2012} introduced the space fractional Poisson process (SFPP) by introducing fractionality in the backward shift operator acting on the state space of the process. They further proposed the space-time fractional Poisson process (STFPP), which unifies the TFPP and the SFPP, by incorporating the Caputo fractional derivative into the governing equation of the probability generating function.

An important probabilistic approach to studying and characterizing counting processes is through their martingale properties. A martingale is a stochastic process for which the conditional expectation of a future state given the past states is equal to the present state. In other words, martingales can be used to model fair processes that evolve over time since the expected values of the states of the process at all time points are the same. \citet{Watanabe1964} provided a martingale characterization of the classical Poisson process based on the inter-arrival time approach. An equivalent formulation can also be obtained through stochastic calculus techniques (see, for example, \citet{Shreve2004}). For an extension of this characterization to the positive real half-line, we refer the reader to \citet{Bremaud1975}. \citet{Aletti2018} established a martingale characterization for the TFPP by time changing the Poisson process using the inverse stable subordinator. Recently, \citet{Dhillon2024} investigated martingale characterizations of the generalized counting process (GCP) and its time-changed variants, namely GFCP and the mixed fractional counting process (MFCP). In the present work, we obtain martingale characterizations for various non-homogeneous counting processes and their fractional generalizations with respect to space and/or time, thereby encompassing all such characterizations in the literature as special cases. Some major applications of martingales include analysis of betting games, financial markets, information theory, stochastic network calculus, queuing systems and branching processes. Thus establishing martingale characterizations enables one to use the flexible stochastic models discussed in this paper for numerous practical situations.  

The rest of the paper is organized as follows. In Section \ref{sec 2}, we present preliminary definitions and essential results relevant to our work. In Section \ref{sec 3}, we establish martingale characterizations of the NPP and the NGCP. We also demonstrate the equality between a weighted sum of NPPs and the NGCP along with the equivalence between the compensated and exponential forms of martingales associated with these processes. Section~\ref{sec 4} provides the martingale characterizations of various fractional extensions of the processes considered. Specifically, we examine the non-homogeneous time fractional, space fractional, space–time fractional, and mixed fractional variants of the NPP and NGCP. Furthermore, we derive the martingale characterizations of the Skellam-type processes corresponding to these fractional stochastic models.

\section{Preliminaries}\label{sec 2} In this section, we present some preliminary results and definitions which will be used later in the paper.
\subsection{Martingale}\label{def:martingale}
Let $\{X(t)\}_{t \ge 0}$ be a stochastic process adapted to a filtration $\{\mathcal{F}_t\}_{t \ge 0}$. 
The process $\{X(t)\}_{t\ge 0}$ is a martingale if
\begin{enumerate}
    \item $\mathbb{E}[|X(t)|] < \infty$ for $t \ge 0$;
    \item $\mathbb{E}[X(t) \mid \mathcal{F}_s] = X(s)$ for $0 \le s \le t$.
\end{enumerate}
\subsection{Non-homogeneous Poisson process}\label{def:nhpp}
Let $\{\mathcal{N}(t)\}_{t\geq 0}$ be a counting process with a deterministic and time-dependent intensity function 
$\lambda:[0,\infty)\to [0,\infty)$ and the corresponding cumulative rate function 
$\Lambda(t)=\int_{0}^{t}\lambda(u)\,du$ such that $\mathcal{N}(0)=0$. 
Then $\{\mathcal{N}(t)\}_{t\geq 0}$ is said to be a non-homogeneous Poisson process (NPP) if it has independent increments and its transition probabilities are given by
\begin{align*}
P(\mathcal{N}(t+h)=n\mid \mathcal{N}(t)=m) &=
\begin{cases}
1 - \lambda(t)h + o(h), & n=m, \\
\lambda(t)h + o(h), & n=m+1, \\
o(h), & n>m+1,
\end{cases}
\end{align*}
    where $o(h)\rightarrow 0$ as $h\rightarrow 0$.    
Equivalently, for $0 \le s \le t$, 
\begin{equation*}
\mathcal{N}(t+s) - \mathcal{N}(s) \sim \mathrm{Poisson}\!\left( \int_{s}^{t+s} \lambda(u)\,du \right).
\end{equation*}

\subsection{Non-homogeneous generalized counting process}\label{ngcp}
 Let $\{\mathcal{M}(t)\}_{t\geq 0}$ be a counting process with deterministic and time-dependent intensity functions $\lambda_j:[0,\infty)\to [0,\infty)$ for $ 1\leq j \leq k$ and cumulative rate functions $\Lambda_{j}(t)=\int_{0}^{t}\lambda_{j}(u)du$ such that $\mathcal{M}(0)=0$. Then $\{\mathcal{M}(t)\}_{t\geq 0}$ is a non-homogeneous generalized counting process (NGCP) if it has independent increments and its transition probabilities are given by (see \citet{Kataria2025})
\begin{align*}
P(\mathcal{M}(t+h)=n\mid\mathcal{M}(t)=m) &=
\begin{cases}
1 - \sum_{j=1}^{k}\lambda_j(t)h + o(h), & n=m, \\
\lambda_j(t)h + o(h), & n=m+j, j=1,2,\ldots,k, \\
o(h), & n>m+k,
\end{cases}
\end{align*}
where $o(h)\rightarrow 0$ as $h\rightarrow 0$.

\subsection{Stable subordinator and its inverse}\label{stable subordinator} A stable subordinator $\{D_{\alpha}(t)\}_{t\geq 0}$, $0<\alpha<1$, is a non-decreasing L\'{e}vy process. Its Laplace transform is given by $E(e^{-sD_\alpha(t)})=e^{-ts^\alpha}$, $s>0$ and the associated Bernstein function is $f(s)=s^\alpha$. Its first passage time $\{Y_\alpha(t)\}_{t\geq 0}$ is called the inverse stable subordinator and defined as  
\begin{equation*}
Y_\alpha(t):=\inf\{x\geq 0:D_{\alpha}(x)>t \}.
\end{equation*}
The mean and variance (see \citet{Leonenko2014}) of $Y_\alpha(t)$ are given by
\begin{align*}
\mathbb{E}(Y_{\alpha}(t))=\frac{t^\alpha}{\Gamma(\alpha+1)}, \quad
\mathbb{V}(Y_\alpha(t))=\left(\frac{2}{\Gamma(2\alpha+1)}-\frac{1}{\Gamma^2(\alpha+1)}\right)t^{2\alpha}.
\end{align*}

\subsection{Tempered stable subordinator}\label{TSS}
A tempered stable subordinator (TSS) $\{D_{\beta,\theta}(t)\}_{t\ge 0}$ with stability index $0<\beta<1$ and tempering paramter $\theta>0$ is obtained by exponentially tempering the distribution of $\beta$-stable subordinator $\{D_\beta(t)\}_{t\ge 0}$ (see \citet{Rosinski2007}). 
The mean and variance (see \citet{Gupta2020a}) of $D_{\beta,\theta}(t)$ are given by
\begin{equation*}
    \mathbb{E}\left[D_{\beta,\theta}(t)\right] = \beta\theta^{\beta-1}t,\quad\mathbb{V}\left[D_{\beta,\theta}(t)\right]=\beta(1-\beta)\theta^{\beta-2}t.
\end{equation*}

\subsection{Mixed stable subordinator and its inverse}\label{MSS} 
The mixed stable subordinator $\{L_{\alpha_1,\alpha_2}(t)\}_{t\ge0}$ (see \citet{Aletti2018}) is a non-decreasing Lévy process characterized by the following Laplace transform:
\begin{equation*}
    \mathbb{E}\left(e^{-sL_{\alpha_1,\alpha_2}(t)}\right)
    = e^{-t\left(C_1 s^{\alpha_1} + C_2 s^{\alpha_2}\right)}, \qquad s \ge 0,
\end{equation*}
where $C_1\ge 0,~C_2\ge0$, $C_1+C_2=1$ and the stability indices satisfy $\alpha_1<\alpha_2$. 
The inverse mixed stable subordinator $\{Y_{\alpha_1,\alpha_2}(t)\}_{t\ge0}$, 
also known as the first-passage time of $\{L_{\alpha_1,\alpha_2}(t)\}_{t\ge 0}$, 
is defined by
\begin{equation*}
    Y_{\alpha_1,\alpha_2}(t)
    := \inf\{s\ge0 : L_{\alpha_1,\alpha_2}(s) > t\}, \qquad t\ge0.
\end{equation*}
For $C_2=0$, $\{Y_{\alpha_1,\alpha_2}(t)\}_{t\ge 0}$ reduces to the inverse $\alpha_1$-stable subordinator $\{Y_{\alpha_1}(t)\}_{t\ge0}$ and a similar reduction holds for $C_1=0$.

\section{Martingale characterizations of non-homogeneous counting processes}\label{sec 3}
In this section, we obtain the martingale characterizations of NPP and NGCP. We show that the compensated and exponential martingale forms are equivalent for these processes. Also we provide a unique representation of the NGCP in terms of a weighted sum of the NPPs. 
\subsection{Martingale characterization of NPP}
The following result will be used to establish the martingale characterization of a NPP. 
\begin{lemma}\label{martingale lemma}
  Let $\{\mathcal{N}(t)\}_{t\ge 0}$ be a counting process with jump size $+1$ with rate $\lambda(t)$ such that $\mathcal{N}(0)=0$ and $\mathcal{F}_t=\sigma(\{\mathcal{N}(s)\},\,s\le t)$, $t\ge 0$ be a filtration generated by it. Then $\{X(\mathcal{N}(t),t)=\exp\{u\mathcal{N}(t)-(e^u-1)\Lambda(t)\}\}_{t\ge 0}$, where $\Lambda(t)=\int_{o}^{t}\lambda(u)du$ is a $\mathcal{F}_t$- martingale if $\{\mathcal{N}(t)-\Lambda(t)\}_{t\ge 0}$ is a $\mathcal{F}_t$-martingale.  
\end{lemma}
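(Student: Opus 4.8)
The plan is to verify the martingale property directly by computing $\mathbb{E}[X(\mathcal{N}(t),t)\mid\mathcal{F}_s]$ for $0\le s\le t$ and showing it equals $X(\mathcal{N}(s),s)$. First I would exploit the multiplicative structure of the exponential, writing
\[
X(\mathcal{N}(t),t)=X(\mathcal{N}(s),s)\,\exp\!\big\{u(\mathcal{N}(t)-\mathcal{N}(s))-(e^u-1)(\Lambda(t)-\Lambda(s))\big\}.
\]
Since $X(\mathcal{N}(s),s)$ is $\mathcal{F}_s$-measurable and $\Lambda$ is deterministic, the martingale identity collapses to the single conditional moment-generating identity
\[
\mathbb{E}\!\left[e^{u(\mathcal{N}(t)-\mathcal{N}(s))}\mid\mathcal{F}_s\right]=\exp\!\big\{(e^u-1)(\Lambda(t)-\Lambda(s))\big\}.
\]

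To establish this I would fix $s$, set $g(t)=\mathbb{E}[e^{u(\mathcal{N}(t)-\mathcal{N}(s))}\mid\mathcal{F}_s]$ for $t\ge s$, and derive a first-order linear ODE for $g$. Conditioning over the increment on $[t,t+h]$ via the tower property gives
\[
g(t+h)=\mathbb{E}\!\left[e^{u(\mathcal{N}(t)-\mathcal{N}(s))}\,\Phi(t,h)\mid\mathcal{F}_s\right],\qquad \Phi(t,h)=\mathbb{E}\!\left[e^{u(\mathcal{N}(t+h)-\mathcal{N}(t))}\mid\mathcal{F}_t\right].
\]
The crux is to show $\Phi(t,h)=1+(e^u-1)\lambda(t)h+o(h)$; this yields $g'(t)=(e^u-1)\lambda(t)g(t)$, and solving with $g(s)=1$ gives $g(t)=\exp\{(e^u-1)(\Lambda(t)-\Lambda(s))\}$, as required.

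The place where the hypothesis enters is in obtaining the infinitesimal form of $\Phi(t,h)$, and here I would combine two facts. Because $\{\mathcal{N}(t)\}_{t\ge 0}$ is a counting process with unit jumps, over a short interval the increment equals $0$ or $1$ up to a $o(h)$ term (the probability of two or more jumps being $o(h)$), so $\Phi(t,h)=P(\text{no jump}\mid\mathcal{F}_t)+e^u\,P(\text{one jump}\mid\mathcal{F}_t)+o(h)$. The assumed martingale property of $\{\mathcal{N}(t)-\Lambda(t)\}_{t\ge 0}$ supplies the compensator relation $\mathbb{E}[\mathcal{N}(t+h)-\mathcal{N}(t)\mid\mathcal{F}_t]=\Lambda(t+h)-\Lambda(t)=\lambda(t)h+o(h)$, which, again using orderliness, identifies $P(\text{one jump}\mid\mathcal{F}_t)=\lambda(t)h+o(h)$ and hence the claimed expansion of $\Phi(t,h)$.

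I expect the main obstacle to be the rigorous justification of this infinitesimal step rather than the surrounding algebra: one must argue carefully that the unit-jump/orderliness property transfers the conditional expectation of the increment to the conditional one-jump probability with only $o(h)$ error, and that the passage from the difference quotient to the ODE (equivalently, differentiating under the conditional expectation) is legitimate. The integrability condition $\mathbb{E}[|X(\mathcal{N}(t),t)|]<\infty$ required by the definition of a martingale must also be verified, but this follows immediately once the moment-generating identity above is in hand.
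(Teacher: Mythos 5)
There is a genuine gap, and it sits exactly at the step you yourself call the crux. Your plan hinges on the expansion $\Phi(t,h)=1+(e^u-1)\lambda(t)h+o(h)$, which you justify by asserting that unit jumps force $P\left(\mathcal{N}(t+h)-\mathcal{N}(t)\ge 2\mid\mathcal{F}_t\right)=o(h)$. That inference is invalid: simplicity (all jumps of size $+1$) does not imply orderliness. A simple point process can place two distinct points in an interval of length $h$ with probability of order $h$, so the lemma's hypotheses --- a unit-jump counting process whose compensated process is a martingale --- do not hand you this infinitesimal structure; extracting it from the martingale hypothesis is essentially the content of the Watanabe characterization itself, for which this lemma is a stepping stone (Theorem \ref{watanabe_npp}), so your route is close to circular. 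Even granting conditional orderliness, identifying $P(\text{one jump}\mid\mathcal{F}_t)=\lambda(t)h+o(h)$ from $\mathbb{E}[\mathcal{N}(t+h)-\mathcal{N}(t)\mid\mathcal{F}_t]=\lambda(t)h+o(h)$ requires $\mathbb{E}\left[(\mathcal{N}(t+h)-\mathcal{N}(t))\mathbf{1}_{\{\mathcal{N}(t+h)-\mathcal{N}(t)\ge 2\}}\mid\mathcal{F}_t\right]=o(h)$, which is strictly stronger than $P(\ge 2 \text{ jumps}\mid\mathcal{F}_t)=o(h)$; and for $u>0$ the corresponding term $\mathbb{E}\left[e^{u(\mathcal{N}(t+h)-\mathcal{N}(t))}\mathbf{1}_{\{\mathcal{N}(t+h)-\mathcal{N}(t)\ge 2\}}\mid\mathcal{F}_t\right]$ need not even be finite a priori, since no exponential moments of $\mathcal{N}$ are given. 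Finally, your $o(h)$ terms are conditional, hence random; passing them through $\mathbb{E}\left[e^{u(\mathcal{N}(t)-\mathcal{N}(s))}\,\Phi(t,h)\mid\mathcal{F}_s\right]$ and still retaining an $o(h)$ error, so that the difference quotient converges to the claimed ODE, needs a domination argument you have not supplied.

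The paper's proof avoids all of this by doing no infinitesimal analysis at all: it applies the It\^o--Doeblin formula for jump processes to $X(\mathcal{N}(t),t)$ and arrives at the representation
\[
X(\mathcal{N}(t),t)=1+(e^u-1)\int_0^t X(\mathcal{N}(s-),s)\left(d\mathcal{N}(s)-\lambda(s)\,ds\right),
\]
that is, a stochastic integral of a predictable (left-limit) integrand against the compensated process, which is a martingale by hypothesis; the martingale property of $X$ then follows from the standard result that such integrals of predictable processes against martingales are martingales (Theorem 11.4.5 of Shreve). If you wish to rescue your approach, you would first have to derive conditional orderliness and the accompanying moment bounds from the compensated-martingale hypothesis alone --- but once you have done that, you will have effectively proved the full characterization of Theorem \ref{watanabe_npp}, not merely this lemma.
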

\begin{proof}
    Let  $F(N(t),t)$ be a function of a jump process $\{N(t)\}_{t\ge 0}$ with jump size $+1$. Then
    \begin{align*}
        dF(N(t),t)&=\frac{\partial}{\partial N(t)}F(N(t),t)dN(t)+\frac{\partial}{\partial t}F(N(t),t)dt\\
        &=\left[F(N(t-)+1)-F(N(t-))\right]dN(t)+\frac{\partial}{\partial t}F(N(t),t)dt.
    \end{align*}
    It follows that
    \begin{align}
        dX(\mathcal{N}(t),t)=\left[X(\mathcal{N}(t-)+1,t)-X(\mathcal{N}(t-),t)\right]d\mathcal{N}(t)+\frac{\partial}{\partial t}X(\mathcal{N}(t),t)dt.\label{npp_watanabe1}
    \end{align}
    Now 
    \begin{align}
        X(\mathcal{N}(t-)+1,t)-X(\mathcal{N}(t-),t)&=e^{u\mathcal{N}(t-)+u-(e^u-1)\Lambda(t)}-e^{u\mathcal{N}(t-)-(e^u-1)\Lambda(t)}\nonumber\\
        &=(e^u-1)X(\mathcal{N}(t-),t)\label{npp_watanabe2}
    \end{align}
    and
    \begin{align}
        \frac{\partial}{\partial t}X(\mathcal{N}(t),t)&=X(\mathcal{N}(t),t)\frac{\partial}{\partial t}\left(-(e^u-1)\int_{0}^{\infty}\lambda(u)du\right)\nonumber\\
        &=-(e^u-1)\lambda(t)X(\mathcal{N}(t),t).\label{npp_watanabe3}
    \end{align}
    From \eqref{npp_watanabe1}, \eqref{npp_watanabe2} and \eqref{npp_watanabe3}, we get
    \begin{align}
         dX(\mathcal{N}(t),t)&=(e^u-1)X(\mathcal{N}(t-),t)d\mathcal{N}(t)-(e^u-1)\lambda(t)X(\mathcal{N}(t),t)dt\nonumber\\
         &=(e^u-1)X(\mathcal{N}(t-),t)d\mathcal{N}(t)-(e^u-1)\lambda(t)X(\mathcal{N}(t-),t)dt\nonumber\\
         &+(e^u-1)\lambda(t)X(\mathcal{N}(t-),t)dt-(e^u-1)\lambda(t)X(\mathcal{N}(t),t)dt\nonumber\\
         &=(e^u-1)X(\mathcal{N}(t-),t)\left(d\mathcal{N}(t)-\lambda(t)dt\right)+(e^u-1)\lambda(t)\left[X(\mathcal{N}(t-),t)-X(\mathcal{N}(t),t)\right]dt.
    \end{align}
    On integrating the above equation from $0$ to $t$, we get
    \begin{align}
       X(\mathcal{N}(t),t)-X(\mathcal{N}(0),0)&=(e^u-1)\int_{0}^{t}X(\mathcal{N}(s-),s)\left(d\mathcal{N}(s)-\lambda(s)ds\right)\nonumber\\
       &+(e^u-1)\int_{0}^{t}\lambda(s)\left[X(\mathcal{N}(s-),s)-X(\mathcal{N}(s),s)\right]ds.\nonumber
    \end{align}
    \begin{equation}
        \implies X(\mathcal{N}(t),t)=1+(e^u-1)\int_{0}^{t}X(\mathcal{N}(s-),s)\left(d\mathcal{N}(s)-\lambda(s)ds\right),\nonumber
    \end{equation}
    where the last step follows since $\{X(\mathcal{N}(s),s)\}_{t\ge 0}$ has finitely many jumps which implies $\int_{0}^{t}X(\mathcal{N}(s-),s)ds\thickapprox \int_{0}^{t}X(\mathcal{N}(s),s)ds$. Since $X(\mathcal{N}(t-),t)$ is predictable and $\{\mathcal{N}(t)-\Lambda(t)\}_{t\ge 0}$ is a $\mathcal{F}_t$-martingale, Theorem 11.4.5 of \citet{Shreve2004} guarantees that $\{X(\mathcal{N}(t),t)\}_{t\ge 0}$ is a $\mathcal{F}_t$-martingale (see example 11.5.2 of \citet{Shreve2004}).
\end{proof}
\citet{Watanabe1964} provided the following martingale characterization of a classical homogeneous Poisson process.

\begin{theorem}
Let $\{N(t)\}_{t \ge 0}$ be a $\{\mathcal{F}_t\}_{t \ge 0}$-adapted simple locally finite point process. Then 
$\{N(t) - \lambda t\}_{t \ge 0}$ is a $\{\mathcal{F}_t\}_{t \ge 0}$-martingale for some $\lambda > 0$ if and only if 
$\{N(t)\}_{t \ge 0}$ is a homogeneous Poisson process with intensity $\lambda > 0$.
\end{theorem}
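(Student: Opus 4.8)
The plan is to prove both implications, treating the forward direction (Poisson $\Rightarrow$ martingale) as routine and concentrating the work on the converse. For the forward direction, I would assume $\{N(t)\}_{t\ge 0}$ is a homogeneous Poisson process with intensity $\lambda$ and verify the two defining conditions of a martingale directly: integrability follows from $\mathbb{E}[N(t)]=\lambda t<\infty$, and the martingale identity follows from the independence and stationarity of the increments, since for $0\le s\le t$ one has $\mathbb{E}[N(t)-N(s)\mid\mathcal{F}_s]=\mathbb{E}[N(t)-N(s)]=\lambda(t-s)$, which rearranges to $\mathbb{E}[N(t)-\lambda t\mid\mathcal{F}_s]=N(s)-\lambda s$.

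For the converse I would invoke Lemma \ref{martingale lemma} specialized to the homogeneous case $\lambda(\cdot)\equiv\lambda$, so that $\Lambda(t)=\lambda t$; note that the simple locally finite point process hypothesis guarantees unit jumps, so the lemma applies. First I would observe that the lemma's conclusion persists for a purely imaginary exponent $u=i\xi$, $\xi\in\mathbb{R}$: the same stochastic-calculus computation carries over verbatim, and integrability is in fact immediate because $|e^{i\xi N(t)}|=1$. Hence, under the hypothesis that $\{N(t)-\lambda t\}_{t\ge 0}$ is an $\{\mathcal{F}_t\}$-martingale, the complex exponential process
\[
M_\xi(t)=\exp\{i\xi N(t)-(e^{i\xi}-1)\lambda t\}
\]
is also an $\{\mathcal{F}_t\}$-martingale.

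The key step is then to extract the finite-dimensional structure from the identity $\mathbb{E}[M_\xi(t)\mid\mathcal{F}_s]=M_\xi(s)$ for $0\le s\le t$. Cancelling the $\mathcal{F}_s$-measurable factor $e^{i\xi N(s)}$ together with the deterministic exponentials rearranges this to
\[
\mathbb{E}\!\left[e^{i\xi(N(t)-N(s))}\mid\mathcal{F}_s\right]=\exp\{(e^{i\xi}-1)\lambda(t-s)\}.
\]
Since the right-hand side is non-random, I would draw two conclusions simultaneously. First, the conditional characteristic function of the increment $N(t)-N(s)$ equals a deterministic function, which forces $N(t)-N(s)$ to be independent of $\mathcal{F}_s$; applying this successively along a partition $0=t_0<t_1<\cdots<t_n$ yields independence of increments. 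Second, the right-hand side is precisely the characteristic function of a Poisson law with parameter $\lambda(t-s)$, so by uniqueness of characteristic functions $N(t)-N(s)\sim\mathrm{Poisson}(\lambda(t-s))$. Independent, stationary, Poisson-distributed increments together with $N(0)=0$ characterize the homogeneous Poisson process of rate $\lambda$, completing the converse.

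I expect the main obstacle to be the conceptual justification that a single deterministic conditional characteristic function encodes the entire defining structure of the process — that it yields both independence from the past (via the equality of the conditional and unconditional characteristic functions) and the identification of the marginal law (via uniqueness of characteristic functions). The analytic passage of the lemma to imaginary $u$ is routine, since modulus-one integrands make integrability easier rather than harder, but I would state carefully that matching characteristic functions determines the distribution and that the conditional-equals-unconditional property of the characteristic function is equivalent to independence of the increment from $\mathcal{F}_s$.
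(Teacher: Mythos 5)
Your proposal is correct, and it follows essentially the same route that the paper uses: the paper itself states this homogeneous theorem without proof (citing Watanabe), but its proof of the non-homogeneous extension, Theorem \ref{watanabe_npp}, is exactly the specialization you describe — forward direction via independent increments, converse via Lemma \ref{martingale lemma} to produce an exponential martingale whose conditional-expectation identity identifies the law of the increments. Two refinements distinguish your write-up and are worth noting. First, you run the lemma with a purely imaginary exponent $u=i\xi$, so the exponential process has modulus bounded on compact time intervals and its martingale property (via the stochastic-integral representation with a bounded predictable integrand against the compensated process) requires no a priori control of $\mathbb{E}\left[e^{uN(t)}\right]$; the paper's real-exponential version implicitly needs this integrability, which is not available before the theorem is proved, so your variant closes a genuine technical lacuna. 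Second, you state explicitly that a deterministic conditional characteristic function forces the increment $N(t)-N(s)$ to be independent of $\mathcal{F}_s$, and you iterate this along a partition to obtain independent increments; the paper's argument stops at identifying the conditional and unconditional marginal laws and leaves the independence of increments — which is part of the definition of the process being characterized — implicit. Both refinements strengthen rather than alter the underlying argument.
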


We extend Watanabe's martingale characterization of a Poisson process to a NPP as shown below. 
\begin{theorem}\label{watanabe_npp}
Let $\{\mathcal{N}(t)\}_{t\ge 0}$ be a simple locally finite point process adapted to $\mathcal{F}_t$ where $\{\mathcal{F}_t=\sigma(\mathcal{N}(t))\}_{t\ge 0}$ is the natural filtration of $\{\mathcal{N}(t)\}_{t\ge 0}$. Then $\{\mathcal{N}(t)-\Lambda(t)\}_{t\ge 0}$ is a $\mathcal{F}_t$-martingale for some $\Lambda(t)>0$ if and only if $\{\mathcal{N}(t)\}_{t\ge 0}$ is a NPP with cumulative intensity function $\Lambda(t)$. 
\end{theorem}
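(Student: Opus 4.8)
The plan is to prove the two implications separately, treating the direction ``NPP $\Rightarrow$ compensated martingale'' as the routine part and reserving the converse for the exponential-martingale machinery already developed in Lemma \ref{martingale lemma}. Throughout, the hypothesis that $\{\mathcal{N}(t)\}_{t\ge 0}$ is a simple locally finite point process guarantees unit jumps, which is precisely what makes Lemma \ref{martingale lemma} applicable.

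First I would establish the forward direction. Assuming $\{\mathcal{N}(t)\}_{t\ge 0}$ is a NPP with cumulative intensity $\Lambda$, integrability follows from $\mathbb{E}|\mathcal{N}(t)-\Lambda(t)| \le \mathbb{E}\mathcal{N}(t) + \Lambda(t) = 2\Lambda(t) < \infty$. For $0\le s\le t$ I would decompose $\mathcal{N}(t)-\Lambda(t) = (\mathcal{N}(s)-\Lambda(s)) + (\mathcal{N}(t)-\mathcal{N}(s)) - (\Lambda(t)-\Lambda(s))$ and condition on $\mathcal{F}_s$. By the independent-increments property the increment $\mathcal{N}(t)-\mathcal{N}(s)$ is independent of $\mathcal{F}_s$ and is $\mathrm{Poisson}(\Lambda(t)-\Lambda(s))$, so its conditional expectation is the deterministic mean $\Lambda(t)-\Lambda(s)$; the two deterministic terms then cancel while the $\mathcal{F}_s$-measurable term is left unchanged, giving $\mathbb{E}[\mathcal{N}(t)-\Lambda(t)\mid\mathcal{F}_s] = \mathcal{N}(s)-\Lambda(s)$.

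For the converse, suppose $\{\mathcal{N}(t)-\Lambda(t)\}_{t\ge 0}$ is a $\mathcal{F}_t$-martingale. By Lemma \ref{martingale lemma}, the exponential process $X(\mathcal{N}(t),t)=\exp\{u\mathcal{N}(t)-(e^u-1)\Lambda(t)\}$ is then a $\mathcal{F}_t$-martingale for every $u\in\mathbb{R}$. Writing out the martingale identity $\mathbb{E}[X(\mathcal{N}(t),t)\mid\mathcal{F}_s]=X(\mathcal{N}(s),s)$ and cancelling the $\mathcal{F}_s$-measurable factor $e^{u\mathcal{N}(s)}$, I would obtain
\[
\mathbb{E}\left[e^{u(\mathcal{N}(t)-\mathcal{N}(s))}\mid\mathcal{F}_s\right] = \exp\{(e^u-1)(\Lambda(t)-\Lambda(s))\}.
\]
The right-hand side is deterministic, and this single identity yields both structural conclusions at once: since the conditional moment generating function of the increment does not depend on $\mathcal{F}_s$, the increment $\mathcal{N}(t)-\mathcal{N}(s)$ is independent of $\mathcal{F}_s$, and since the common value is exactly the moment generating function of a $\mathrm{Poisson}(\Lambda(t)-\Lambda(s))$ law, uniqueness of the transform forces $\mathcal{N}(t)-\mathcal{N}(s)\sim\mathrm{Poisson}(\Lambda(t)-\Lambda(s))$. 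Combined with $\mathcal{N}(0)=0$, this is precisely the definition of a NPP with cumulative intensity $\Lambda$.

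The main obstacle I anticipate is the rigorous extraction of independence and the Poisson law from the conditional moment generating function identity: one must argue that equality of the conditional transform to a deterministic function (valid for all $u$, hence on an interval about the origin where it determines the law) genuinely upgrades to independence of the increment from the entire $\sigma$-algebra $\mathcal{F}_s$, rather than merely matching unconditional moments. I would make this precise by testing the identity against arbitrary bounded $\mathcal{F}_s$-measurable random variables and then invoking the uniqueness theorem for moment generating functions to pin down the increment distribution.
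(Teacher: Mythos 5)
Your proposal is correct and follows essentially the same route as the paper: the forward direction by conditioning on $\mathcal{F}_s$ and using independent increments, and the converse by invoking Lemma \ref{martingale lemma} to get the exponential martingale, cancelling the $\mathcal{F}_s$-measurable factor, and reading off the conditional moment generating function of the increment. If anything, you are slightly more careful than the paper, which states $\mathcal{N}(t)-\mathcal{N}(s)\mid\mathcal{F}_s\sim\mathrm{Poi}(\Lambda(t)-\Lambda(s))$ and passes to the unconditional law without explicitly noting that the deterministic conditional transform also yields independence of the increment from $\mathcal{F}_s$ — the property needed to verify the independent-increments clause in the definition of a NPP.
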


\begin{proof}
Let $\{\mathcal{N}(t)\}_{t\ge 0}$ be a NPP with cumulative intensity function $\Lambda(t)>0$. Then for $0\le s<t$, we have
\begin{equation*}
\mathbb{E}(\mathcal{N}(t)-\Lambda(t)\big|\mathcal{F}_s)=\mathbb{E}(\mathcal{N}(t)-\mathcal{N}(s)\big|\mathcal{F}_s)+\mathcal{N}(s)-\Lambda(t)=\mathbb{E}(\mathcal{N}(t)-\mathcal{N}(s))+\mathcal{N}(s)-\Lambda(t)=\mathcal{N}(s)-\Lambda(s),
\end{equation*}
which implies $\{\mathcal{N}(t)-\Lambda(t)\}_{t\ge 0}$ is a $\mathcal{F}_t$-martingale. Next suppose $\{\mathcal{N}(t)-\Lambda(t)\}_{t\ge 0}$ is a $\mathcal{F}_t$-martingale. 
By Lemma \ref{martingale lemma}, $\{X(\mathcal{N}(t),t)=\exp(u\mathcal{N}(t)-(e^u-1)\Lambda(t))\}_{t\ge 0}$ is a $\mathcal{F}_t$-martingale.
Now using the fact that $X(\mathcal{N}(t),t)=X(\mathcal{N}(s),s)\exp\{u[\mathcal{N}(t)-\mathcal{N}(s)]-(e^u-1)\left(\Lambda(t)-\Lambda(s)\right)\}$ and by the definition of a martingale, we have
\begin{align}
    &\mathbb{E}\left(X(\mathcal{N}(t),t)\big|\mathcal{F}_s\right)=X(\mathcal{N}(s),s)\nonumber\\
    \implies&\mathbb{E}\left(X(\mathcal{N}(s),s)e^{u[\mathcal{N}(t)-\mathcal{N}(s)]-(e^u-1)\left(\Lambda(t)-\Lambda(s)\right)}\big|\mathcal{F}_s\right)=X(\mathcal{N}(s),s)\nonumber\\
    \implies &X(\mathcal{N}(s),s)\mathbb{E}\left(e^{u[\mathcal{N}(t)-\mathcal{N}(s)]-(e^u-1)\left(\Lambda(t)-\Lambda(s)\right)}\big|\mathcal{F}_s\right)=X(\mathcal{N}(s),s)\nonumber\\
    \implies &\mathbb{E}\left(e^{u[\mathcal{N}(t)-\mathcal{N}(s)]-(e^u-1)\left(\Lambda(t)-\Lambda(s)\right)}\big|\mathcal{F}_s\right)=1\nonumber\\
    \implies &\mathbb{E}\left(e^{u[\mathcal{N}(t)-\mathcal{N}(s)]}\big|\mathcal{F}_s\right)=e^{(e^u-1)\left(\Lambda(t)-\Lambda(s)\right)}.\label{increment_distribution_npp}
\end{align}
Thus $\mathcal{N}(t)-\mathcal{N}(s)\big|\mathcal{F}_s\sim$ Poi$\left(\Lambda(t)-\Lambda(s)\right)$.
Now taking expectation on both sides of \eqref{increment_distribution_npp}, we get
\begin{align}
    &\mathbb{E}\left[\mathbb{E}\left(e^{u[\mathcal{N}(t)-\mathcal{N}(s)]}\big|\mathcal{F}_s\right)\right]=\mathbb{E}\left(e^{(e^u-1)\left(\Lambda(t)-\Lambda(s)\right)}\right)\nonumber\\
    \implies&\mathbb{E}\left(e^{u[\mathcal{N}(t)-\mathcal{N}(s)]}\right)=e^{(e^u-1)\left(\Lambda(t)-\Lambda(s)\right)},\nonumber
\end{align}
that is, $\mathcal{N}(t)-\mathcal{N}(s)\sim$ Poi$\left(\Lambda(t)-\Lambda(s)\right)$. Taking $s=0$, we have $\mathcal{N}(t)\sim$ Poi$\left(\Lambda(t)\right)$, that is, $\{\mathcal{N}(t)\}_{t\ge 0}$ is a NPP.
\end{proof}

The following result is a generalization of Lemma \ref{martingale lemma} for a weighted sum of $k$ non-homogeneous counting processes.
\begin{proposition}\label{martingale_proposition}
    Let $\{\mathcal{N}_j(t)\}_{t\ge 0}$ be $k$ independent counting processes with jump size $+1$ and rates $\lambda_j(t)$ for $j=1,2,\dots,k$ such that $\mathcal{N}_j(0)=0$ and $X(t)=\sum_{j=1}^{k}uj\mathcal{N}_j(t)-\sum_{j=1}^{k}(e^{uj}-1)\Lambda_j(t)$, where $\Lambda_j(t)=\int_{o}^{t}\lambda_j(u)du$. Then $\{\exp\left(X(t)\right)\}_{t\ge 0}$ is a martingale with respect to $\mathcal{F}_t=\sigma(\{\mathcal{N}_1(s),\mathcal{N}_2(s),\dots,\mathcal{N}_k(s)\},\,s\le t)$, $t\ge 0$ if for each $j$, the process $\{\mathcal{N}_j(t)-\Lambda_j(t)\}_{t\ge 0}$ is a martingale with respect to $\mathcal{F}_t^j=\sigma(\{\mathcal{N}_j(s)\},\,s\le t)$.  
\end{proposition}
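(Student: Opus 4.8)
The plan is to reduce the statement to the single-process case already settled in Lemma \ref{martingale lemma} and then exploit the independence of the component processes. The first observation is that the exponential factorizes the sum, namely
\[
\exp(X(t)) = \prod_{j=1}^{k}\exp\{u_j\mathcal{N}_j(t)-(e^{u_j}-1)\Lambda_j(t)\} =: \prod_{j=1}^{k} X_j(t),
\]
where each factor $X_j(t)$ is precisely the exponential functional of the single process $\mathcal{N}_j$ treated in Lemma \ref{martingale lemma}. Since by hypothesis $\{\mathcal{N}_j(t)-\Lambda_j(t)\}_{t\ge 0}$ is an $\mathcal{F}_t^j$-martingale for each $j$, Lemma \ref{martingale lemma} gives at once that each $\{X_j(t)\}_{t\ge 0}$ is an $\mathcal{F}_t^j$-martingale.

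Next I would dispose of integrability. Each $X_j(t)$ is non-negative with $X_j(0)=1$, so the martingale property forces $\mathbb{E}[X_j(t)]=1$ for all $t$. Because $\mathcal{N}_1,\dots,\mathcal{N}_k$ are mutually independent, the factors $X_1(t),\dots,X_k(t)$ are independent, whence $\mathbb{E}[\exp(X(t))]=\prod_{j=1}^{k}\mathbb{E}[X_j(t)]=1<\infty$, verifying the first martingale condition.

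The core of the argument is the factorization of the conditional expectation along the joint filtration. Fix $0\le s\le t$ and note $\mathcal{F}_s=\mathcal{F}_s^1\vee\cdots\vee\mathcal{F}_s^k$. I claim that
\[
\mathbb{E}\Big[\prod_{j=1}^{k} X_j(t)\,\Big|\,\mathcal{F}_s\Big] = \prod_{j=1}^{k}\mathbb{E}\big[X_j(t)\mid \mathcal{F}_s^j\big].
\]
The right-hand side is $\mathcal{F}_s$-measurable since each factor is $\mathcal{F}_s^j$-measurable. To identify it as the conditional expectation it suffices, by a $\pi$-system/monotone-class argument, to test against indicators of product sets $A=A_1\cap\cdots\cap A_k$ with $A_j\in\mathcal{F}_s^j$, since these generate $\mathcal{F}_s$ and form a $\pi$-system. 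On such a set the mutual independence of the $\mathcal{F}_t^j$ (inherited from the independence of the processes) splits both $\mathbb{E}[\mathbf{1}_A\prod_j X_j(t)]$ and $\mathbb{E}[\mathbf{1}_A\prod_j\mathbb{E}[X_j(t)\mid\mathcal{F}_s^j]]$ into products of single-index expectations, which agree term by term by the defining property of $\mathbb{E}[\,\cdot\mid\mathcal{F}_s^j]$. Applying the single-process martingale identity $\mathbb{E}[X_j(t)\mid\mathcal{F}_s^j]=X_j(s)$ then yields $\prod_j X_j(s)=\exp(X(s))$, which is the desired martingale property.

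I expect the main obstacle to be the careful justification of this factorization step: conditional expectations do not distribute over products in general, and the conclusion rests essentially on the mutual independence of the component filtrations together with the fact that each factor is adapted to, and a martingale for, its own sub-filtration. Keeping precise track of which filtration each object lives in — the individual $\mathcal{F}_t^j$ versus the joint $\mathcal{F}_t$ — is where the care is concentrated; the remaining computations are routine consequences of Lemma \ref{martingale lemma}.
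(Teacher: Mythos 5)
Your proof is correct, but it follows a genuinely different route from the paper's. The paper works directly with the joint process: it applies the It\^o--Doeblin formula for jump processes to $f(X(t))$ with $f(x)=e^x$, isolates the jump contributions $\sum_{j}(e^{uj}-1)f(X(s-))\,d\mathcal{N}_j(s)$, rewrites $\exp(X(t))$ as $1+\sum_{j=1}^{k}(e^{uj}-1)\int_0^t f(X(s-))\left[d\mathcal{N}_j(s)-\lambda_j(s)\,ds\right]$, and then invokes Theorem 11.4.5 of Shreve (stochastic integrals of predictable integrands against compensated martingales are martingales). You instead factor $\exp(X(t))=\prod_{j=1}^{k}X_j(t)$ into single-process exponential martingales, each covered by Lemma \ref{martingale lemma} (with $u$ replaced by $uj$, legitimate since $u$ is a free parameter there), and glue them with the factorization $\mathbb{E}\left[\prod_j X_j(t)\mid\mathcal{F}_s\right]=\prod_j\mathbb{E}\left[X_j(t)\mid\mathcal{F}_s^j\right]$, justified by a $\pi$-system argument over product sets $A_1\cap\cdots\cap A_k$, $A_j\in\mathcal{F}_s^j$. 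Your route buys two things: it reuses the one-dimensional lemma as a black box, and it makes explicit exactly where independence enters, namely in upgrading statements about the individual filtrations $\mathcal{F}_t^j$ to the joint filtration $\mathcal{F}_t$. This is actually a point the paper's proof leaves implicit: Shreve's theorem requires the integrators $\mathcal{N}_j(t)-\Lambda_j(t)$ to be martingales with respect to the \emph{joint} filtration $\mathcal{F}_t$, whereas the hypothesis grants this only for $\mathcal{F}_t^j$, and the filtration enlargement is precisely the independence argument you spell out. What the paper's route buys in exchange: it does not rely on the multiplicative structure of the exponential, so it adapts to settings where the functional does not factor across components (provided the compensated processes are joint-filtration martingales), and it yields the explicit semimartingale representation of $\exp(X(t))$ as a stochastic integral, which is useful in its own right.
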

\begin{proof}
    Note that $\sum_{j=1}^{k}uj\mathcal{N}_j(t)$ is the jump part and $\sum_{j=1}^{k}(e^{uj}-1)\Lambda_j(t)$ is the continuous part of the process $X(t)$. Let $f(x)=e^x$. Using It\'o-Doeblin formula for a jump process (see page 483, \citet{Shreve2004}), we obtain
    \begin{align}\label{watanabe_proposition1}
        f(X(t))&=f(X(0))-\int_{0}^{t}\sum_{j=1}^{k}\lambda_j(s)(e^{uj}-1)f'(X(s))ds+\sum_{0<s\le t}\left(f(X(s))-f(X(s-))\right).
    \end{align}
    At time $s$, only one process $\mathcal{N}_j(t)$ can jump. If a jump occurs for the $j^{th}$ process, then $f(X(s))-f(X(s-))=(e^{uj}-1)f(X(s-))$, otherwise $f(X(s))-f(X(s-)=0$. So 
    \begin{align*}
        f(X(s))-f(X(s-))=\sum_{j=1}^{k}(e^{uj}-1)f(X(s-))d\mathcal{N}_j(s).
    \end{align*}
    Thus the jump contributions become
    \begin{equation}\label{watanabe_proposition2}
        \sum_{0<s\le t}f(X(s))-f(X(s-))=\int_{0}^{t}\sum_{j=1}^{k}(e^{uj}-1)f(X(s-))d\mathcal{N}_j(s).
    \end{equation}
    From \eqref{watanabe_proposition1} and \eqref{watanabe_proposition2}, we get
    \begin{align}
        f(X(t))&=1-\int_{0}^{t}\sum_{j=1}^{k}\lambda_j(s)(e^{uj}-1)f(X(s))ds+\int_{0}^{t}\sum_{j=1}^{k}(e^{uj}-1)f(X(s-))d\mathcal{N}_j(s)\nonumber\\
        &=1-\int_{0}^{t}\sum_{j=1}^{k}\lambda_j(s)(e^{uj}-1)f(X(s))ds+\int_{0}^{t}\sum_{j=1}^{k}(e^{uj}-1)f(X(s-))d\mathcal{N}_j(s)\nonumber\\
        &-\int_{0}^{t}\sum_{j=1}^{k}(e^{uj}-1)f(X(s-))\lambda_j(s)ds+\int_{0}^{t}\sum_{j=1}^{k}(e^{uj}-1)f(X(s-))\lambda_j(s)ds\nonumber\\
        &=1+\int_{0}^{t}\sum_{j=1}^{k}(e^{uj}-1)f(X(s-))\left[d\mathcal{N}_j(s)-\lambda_j(s)ds\right]-\int_{0}^{t}\sum_{j=1}^{k}\lambda_j(s)(e^{uj}-1)\left(f(X(s))-f(X(s-))\right)ds\nonumber\\
        &=1+\sum_{j=1}^{k}(e^{uj}-1)\int_{0}^{t}f(X(s-))\left[d\mathcal{N}_j(s)-\lambda_j(s)ds\right],
    \end{align}
    where the last step follows because $\{f(X(s))\}_{t\ge 0}$ has finitely many jumps. Now, as $\{f(X(t-))\}_{t\ge 0}$ is predictable and $\{\mathcal{N}_j(t)-\Lambda_j(t)\}_{t\ge 0}$ is a $\mathcal{F}_t^j$-martingale, by Theorem 11.4.5 of \citet{Shreve2004}, $\{\exp\left(X(t)\right)\}_{t\ge 0}$ is a $\mathcal{F}_t$-martingale.
\end{proof}
\subsection{Equivalence of martingales}
Let $\{\mathcal{N}(t)\}_{t \ge 0}$ be a counting process with intensity function $\lambda(t)$ and compensator $\Lambda(t) = \int_0^t \lambda(s)\,ds$. Then $\{\mathcal{N}(t) - \Lambda(t)\}_{t\ge 0}$ and $\left\{\exp\!\left[\theta\,\mathcal{N}(t) - \Lambda(t)(e^{\theta} - 1)\right]\right\}_{t\ge 0}$ for $\theta\in\mathbb{R}$ are called the compensated and the stochastic exponential processes associated with $\mathcal{N}(t)$ respectively. The next result shows the equivalence between the compensated and exponential martingale forms for the NPP.
\begin{theorem}\label{martingale_equivalence}
    For a NPP $\{\mathcal{N}(t)\}_{t\ge 0}$, the process $\{\mathcal{N}(t)-\Lambda(t)\}_{t\ge 0}$ is a $\mathcal{F}_t$-martingale if and only if $\{X(t)=\exp[u\mathcal{N}(t)-(e^u-1)\Lambda(t)]\}_{t\ge 0}$ is a $\mathcal{F}_t$-martingale, where $\mathcal{F}_t=\sigma(\{\mathcal{N}(s)\},\,s\le t)$, $t\ge 0$.
\end{theorem}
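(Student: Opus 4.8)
The plan is to prove the two implications separately, taking advantage of the fact that one direction is essentially already packaged in Lemma \ref{martingale lemma}. For the direction ``compensated martingale $\Rightarrow$ exponential martingale'', I would simply invoke Lemma \ref{martingale lemma}: that lemma asserts precisely that if $\{\mathcal{N}(t)-\Lambda(t)\}_{t\ge 0}$ is an $\mathcal{F}_t$-martingale, then $\{X(t)=\exp[u\mathcal{N}(t)-(e^u-1)\Lambda(t)]\}_{t\ge 0}$ is an $\mathcal{F}_t$-martingale. Hence no additional computation is needed for this half, and the whole content of the theorem lies in the converse.

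For the converse, ``exponential martingale $\Rightarrow$ compensated martingale'', I would start from the martingale identity $\mathbb{E}[X(t)\mid\mathcal{F}_s]=X(s)$ and exploit the multiplicative structure $X(t)=X(s)\exp\{u[\mathcal{N}(t)-\mathcal{N}(s)]-(e^u-1)(\Lambda(t)-\Lambda(s))\}$, exactly as in the proof of Theorem \ref{watanabe_npp}. Since $X(s)$ is $\mathcal{F}_s$-measurable and strictly positive, it cancels, leaving the conditional moment generating function identity $\mathbb{E}[e^{u[\mathcal{N}(t)-\mathcal{N}(s)]}\mid\mathcal{F}_s]=e^{(e^u-1)(\Lambda(t)-\Lambda(s))}$, valid for all $u$. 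I would then differentiate both sides in $u$ and set $u=0$: the right-hand side has derivative $(\Lambda(t)-\Lambda(s))\,e^u\,e^{(e^u-1)(\Lambda(t)-\Lambda(s))}$, which equals $\Lambda(t)-\Lambda(s)$ at $u=0$, while the left-hand side yields $\mathbb{E}[\mathcal{N}(t)-\mathcal{N}(s)\mid\mathcal{F}_s]$. Rearranging gives $\mathbb{E}[\mathcal{N}(t)-\Lambda(t)\mid\mathcal{F}_s]=\mathcal{N}(s)-\Lambda(s)$, which is the compensated martingale property; the integrability requirement $\mathbb{E}|\mathcal{N}(t)-\Lambda(t)|<\infty$ follows since the identity already forces the increments to be Poisson, hence of finite mean.

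An equivalent and perhaps more transparent route for the converse bypasses differentiation: the conditional identity above is exactly the moment generating function of a $\mathrm{Poisson}(\Lambda(t)-\Lambda(s))$ law and is deterministic, so it characterizes $\mathcal{N}(t)-\mathcal{N}(s)$ as conditionally Poisson and independent of $\mathcal{F}_s$, from which the conditional mean $\Lambda(t)-\Lambda(s)$ is read off directly. This is precisely the chain already carried out in Theorem \ref{watanabe_npp}, so the converse can also be phrased as ``exponential martingale $\Rightarrow$ $\mathcal{N}$ is a NPP $\Rightarrow$ compensated martingale'', where the last step is the easy forward direction of Theorem \ref{watanabe_npp}.

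The main obstacle is the rigorous justification of this backward step. Interchanging differentiation and conditional expectation requires a dominated-convergence or analyticity argument for the (conditional) moment generating function, and one must take care that the exponential-martingale hypothesis is understood to hold for all $u$ (equivalently, on an interval around $u=0$), since the martingale property at a single fixed value of $u$ pins down only one functional of the conditional law rather than its mean. Once this is settled, every remaining step is routine algebraic bookkeeping, and the equivalence follows.
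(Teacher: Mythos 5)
Your proposal is correct and follows essentially the same route as the paper: the forward direction is exactly an invocation of Lemma \ref{martingale lemma}, and the converse is obtained by differentiating the exponential-martingale identity with respect to $u$ at $u=0$ to recover $\mathbb{E}[\mathcal{N}(t)-\Lambda(t)\mid\mathcal{F}_s]=\mathcal{N}(s)-\Lambda(s)$ (the paper differentiates $\mathbb{E}[X(t)\mid\mathcal{F}_s]=X(s)$ directly rather than first cancelling $X(s)$, a purely cosmetic difference). Your closing caveat about requiring the hypothesis for all $u$ near $0$ and justifying the interchange of differentiation and conditional expectation is a fair point that the paper itself passes over silently.
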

\begin{proof}
    Suppose that the process $\{\mathcal{N}(t)-\Lambda(t)\}_{t\ge 0}$ is a martingale. By Lemma \ref{martingale lemma}, it is clear that $\{X(t)=\exp[u\mathcal{N}(t)-(e^u-1)\Lambda(t)]\}_{t\ge 0}$ is a martingale. 
    For the converse part, suppose that $\{X(t)=\exp[u\mathcal{N}(t)-(e^u-1)\Lambda(t)]\}_{t\ge 0}$ is a martingale. Therefore by definition, we have
    \begin{align}
        \mathbb{E}\left(X(t)\big|\mathcal{F}_s\right)=X(s).\label{martingale_equivalence_eq1}
    \end{align}
    For fixed $s$ and $t$ with $0<s\le t$, $X(t)$ is a function of $u$. Since $X(t)$ is smooth in $u$, differentiating \eqref{martingale_equivalence_eq1} with respect to $u$ at $u=0$, we get
    \begin{align}
        &\frac{\partial}{\partial u}\mathbb{E}\left(X(t)\big|\mathcal{F}_s\right)\bigg|_{u=0}=\frac{\partial}{\partial u}X(s)\bigg|_{u=0}\nonumber\\
        \implies&\mathbb{E}\left(\frac{\partial}{\partial u}\exp\left\{u\mathcal{N}(t)-(e^u-1)\Lambda(t)\right\}\big|_{u=0}\bigg|\mathcal{F}_s\right)=\left[\left(\mathcal{N}(s)-e^u\Lambda(s)\right)\exp\left\{u\mathcal{N}(s)-(e^u-1)\Lambda(s)\right\}\right]\bigg|_{u=0}\nonumber\\
        \implies&\mathbb{E}\left(\left(\mathcal{N}(t)-e^u\Lambda(t)\right)\exp\left\{u\mathcal{N}(t)-(e^u-1)\Lambda(t)\right\}\big|_{u=0}\bigg|\mathcal{F}_s\right)=\mathcal{N}(s)-\Lambda(s)\nonumber\\
        \implies&\mathbb{E}\left(\mathcal{N}(t)-\Lambda(t)\big|\mathcal{F}_s\right)=\mathcal{N}(s)-\Lambda(s).\nonumber
    \end{align}
    Since $s$ and $t$ are arbitrary, it follows that $\{\mathcal{N}(t)-\Lambda(t)\}_{t\ge 0}$ is a martingale.
\end{proof}

The following result gives an equivalent martingale characterization of a NPP.
\begin{theorem}\label{exponential_npp}
Let $\{\mathcal{N}(t)\}_{t\ge 0}$ be a simple locally finite point process adapted to $\mathcal{F}_t$ where $\{\mathcal{F}_t=\sigma(\mathcal{N}(t))\}_{t\ge 0}$ is the natural filtration of $\{\mathcal{N}(t)\}_{t\ge 0}$. Then $\{\exp\{u\mathcal{N}(t)-(e^u-1)\Lambda(t)\}\}_{t\ge 0}$ is a $\mathcal{F}_t$-martingale for some $\Lambda(t)>0$ if and only if $\{\mathcal{N}(t)\}_{t\ge 0}$ is a NPP with cumulative intensity function $\Lambda(t)$.
\end{theorem}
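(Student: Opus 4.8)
The plan is to recycle the machinery already assembled for Theorem~\ref{watanabe_npp}, now reading the exponential martingale as the hypothesis rather than as a derived object. For the forward implication, suppose $\{\mathcal{N}(t)\}_{t\ge 0}$ is a NPP with cumulative intensity function $\Lambda(t)$. By Theorem~\ref{watanabe_npp} the compensated process $\{\mathcal{N}(t)-\Lambda(t)\}_{t\ge 0}$ is an $\mathcal{F}_t$-martingale, whence Lemma~\ref{martingale lemma} immediately gives that $\{\exp\{u\mathcal{N}(t)-(e^u-1)\Lambda(t)\}\}_{t\ge 0}$ is an $\mathcal{F}_t$-martingale. This half is therefore essentially a one-line consequence of earlier results.

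For the converse, assume $\{X(t)=\exp\{u\mathcal{N}(t)-(e^u-1)\Lambda(t)\}\}_{t\ge 0}$ is an $\mathcal{F}_t$-martingale. The key step is the multiplicative factorization
\[
X(t)=X(s)\exp\{u[\mathcal{N}(t)-\mathcal{N}(s)]-(e^u-1)(\Lambda(t)-\Lambda(s))\},
\]
valid for $0\le s\le t$. Since $X(s)$ is strictly positive and $\mathcal{F}_s$-measurable, cancelling it from the martingale identity $\mathbb{E}(X(t)\mid\mathcal{F}_s)=X(s)$ reproduces exactly \eqref{increment_distribution_npp}, namely $\mathbb{E}(e^{u[\mathcal{N}(t)-\mathcal{N}(s)]}\mid\mathcal{F}_s)=e^{(e^u-1)(\Lambda(t)-\Lambda(s))}$. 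Because the right-hand side is deterministic, the conditional moment generating function of the increment does not depend on the past; this single identity simultaneously forces $\mathcal{N}(t)-\mathcal{N}(s)$ to be independent of $\mathcal{F}_s$ and to have the Poisson law with parameter $\Lambda(t)-\Lambda(s)$. Taking $s=0$ (and using $\mathcal{N}(0)=0$) identifies $\Lambda(t)$ as the cumulative intensity, so $\{\mathcal{N}(t)\}_{t\ge 0}$ has independent increments with the prescribed Poisson marginals and is a NPP.

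The argument carries no genuine obstacle, only one point of care: the deduction that a deterministic conditional moment generating function yields both the Poisson marginal and the increment independence at once, together with the integrability needed to treat the factorization at every real $u$ (unproblematic, since the Poisson moment generating function is entire). These are handled precisely as in the proof of Theorem~\ref{watanabe_npp}, so the converse is in effect a repackaging of that computation. An alternative route would be to differentiate the martingale identity in $u$ at $u=0$, as in the converse of Theorem~\ref{martingale_equivalence}, recovering the compensated martingale and then invoking Theorem~\ref{watanabe_npp}; I would nevertheless favour the direct moment-generating-function computation above, since it produces the NPP structure in a single stroke.
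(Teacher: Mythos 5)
Your proposal is correct, and its converse takes a genuinely more direct route than the paper's. The paper proves this theorem in one line, by citing Theorem \ref{martingale_equivalence} (to pass from the exponential to the compensated martingale, via differentiation in $u$ at $u=0$) and then Theorem \ref{watanabe_npp} (compensated martingale $\Rightarrow$ NPP). Your converse instead runs the moment-generating-function factorization directly from the exponential hypothesis, which is precisely the computation embedded in the second half of the paper's proof of Theorem \ref{watanabe_npp}, beginning at \eqref{increment_distribution_npp}. This buys three things. First, it avoids a round trip: the paper's chain goes exponential $\to$ compensated $\to$ (inside the proof of Theorem \ref{watanabe_npp}, via Lemma \ref{martingale lemma}) exponential again $\to$ NPP, whereas you pass from the exponential martingale to the NPP conclusion in a single sweep. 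Second, it sidesteps a formal circularity: Theorem \ref{martingale_equivalence} is stated ``for a NPP'', so invoking its converse before knowing that $\{\mathcal{N}(t)\}_{t\ge 0}$ is a NPP is not literally licensed by its statement (its proof never uses the NPP hypothesis, so the paper's citation can be repaired, but yours needs no repair). Third, you make explicit that a deterministic conditional moment generating function forces the increment to be independent of $\mathcal{F}_s$, hence independent increments of the process --- a point the paper's proof of Theorem \ref{watanabe_npp} leaves implicit, as it records only the conditional and unconditional Poisson laws. Your forward direction (Theorem \ref{watanabe_npp} plus Lemma \ref{martingale lemma}) coincides with the paper's. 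What the paper's route buys in exchange is economy: a one-line proof reusing the two equivalences already established.
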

\begin{proof}
The result follows using Theorems \ref{watanabe_npp} and \ref{martingale_equivalence}.
\end{proof}
Next, we give a generalization of Theorem \ref{martingale_equivalence} for a weighted sum of $k$ NPPs.
\begin{theorem}\label{martingale_equivalence_ngcp}
    Let $\{\mathcal{N}_j(t)\}_{t\ge 0}$ be $k$ independent NPPs with jump size $+1$ and rates $\lambda_j(t)$ for $j=1,2,\dots,k$ such that $\mathcal{N}_j(0)=0$ and $X(t)=\sum_{j=1}^{k}uj\mathcal{N}_j(t)-\sum_{j=1}^{k}(e^{uj}-1)\Lambda_j(t)$, where $\Lambda_j(t)=\int_{o}^{t}\lambda_j(u)du$. Then $\{\sum_{j=1}^{k}j\mathcal{N}_j(t)-\sum_{j=1}^{k}j\Lambda_j(t)\}_{t\ge 0}$ is a $\mathcal{F}_t$-martingale if and only if $\{\exp\left(X(t)\right)\}_{t\ge 0}$ is a $\mathcal{F}_t$-martingale, where $\mathcal{F}_t=\sigma(\{\mathcal{N}_1(s),\mathcal{N}_2(s),\dots,\mathcal{N}_k(s)\},\,s\le t)$, $t\ge 0$.   
\end{theorem}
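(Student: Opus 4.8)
The plan is to follow the two-step template of Theorem~\ref{martingale_equivalence}, now applied to the weighted combination $X(t)$ of the $k$ independent processes, and to treat the two implications of the equivalence separately.

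For the forward implication I would argue through Proposition~\ref{martingale_proposition}. Assuming $\{\sum_{j=1}^{k}j\mathcal{N}_j(t)-\sum_{j=1}^{k}j\Lambda_j(t)\}_{t\ge 0}$ is a $\mathcal{F}_t$-martingale, I note that each $\{\mathcal{N}_j(t)\}_{t\ge 0}$ is by hypothesis a NPP with cumulative intensity $\Lambda_j(t)$, so Theorem~\ref{watanabe_npp} makes every compensated process $\{\mathcal{N}_j(t)-\Lambda_j(t)\}_{t\ge 0}$ a $\mathcal{F}_t^j$-martingale. The hypotheses of Proposition~\ref{martingale_proposition} are then met, and it delivers at once that $\{\exp(X(t))\}_{t\ge 0}$ is a $\mathcal{F}_t$-martingale; this is the most economical route, since that proposition was built precisely for this weighted exponential.

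For the converse I would assume $\{\exp(X(t))\}_{t\ge 0}$ is a $\mathcal{F}_t$-martingale, so that for $0<s\le t$,
\begin{equation*}
\mathbb{E}\left(\exp(X(t))\,\big|\,\mathcal{F}_s\right)=\exp(X(s)),
\end{equation*}
and then extract the compensated martingale by differentiating in the scalar $u$ and setting $u=0$. Since $X(t)|_{u=0}=0$ forces $\exp(X(t))|_{u=0}=1$, a direct computation gives
\begin{equation*}
\frac{\partial}{\partial u}\exp(X(t))\bigg|_{u=0}=\sum_{j=1}^{k}j\mathcal{N}_j(t)-\sum_{j=1}^{k}j\Lambda_j(t),
\end{equation*}
and the analogous identity holds for $X(s)$. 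Differentiating the conditional-expectation relation at $u=0$ therefore yields
\begin{equation*}
\mathbb{E}\left(\sum_{j=1}^{k}j\mathcal{N}_j(t)-\sum_{j=1}^{k}j\Lambda_j(t)\,\bigg|\,\mathcal{F}_s\right)=\sum_{j=1}^{k}j\mathcal{N}_j(s)-\sum_{j=1}^{k}j\Lambda_j(s),
\end{equation*}
which is exactly the martingale property of the compensated weighted sum.

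The step demanding the most care is the interchange of the $u$-derivative with the conditional expectation, which I would justify as in Theorem~\ref{martingale_equivalence}: the Poisson increments $\mathcal{N}_j(t)-\mathcal{N}_j(s)$ possess finite exponential moments, so on a neighbourhood of $u=0$ the difference quotients of $\exp(X(t))$ are dominated by an integrable random variable, and dominated convergence permits moving $\partial/\partial u$ inside $\mathbb{E}(\cdot\mid\mathcal{F}_s)$. Granting this, both implications close; since $s$ and $t$ are arbitrary, the stated equivalence follows.
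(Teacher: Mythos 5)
Your proof is correct for the theorem as stated, and its converse direction is essentially identical to the paper's: differentiate the martingale identity $\mathbb{E}(\exp(X(t))\mid\mathcal{F}_s)=\exp(X(s))$ in $u$ at $u=0$, using $X(t)|_{u=0}=0$ to extract the compensated weighted sum (you even supply the dominated-convergence justification for interchanging $\partial/\partial u$ with conditional expectation, which the paper passes over with a remark about smoothness). Where you genuinely diverge is the forward direction. You invoke the standing hypothesis that each $\mathcal{N}_j$ is a NPP, apply the easy direction of Theorem \ref{watanabe_npp} to get that each $\{\mathcal{N}_j(t)-\Lambda_j(t)\}_{t\ge 0}$ is a $\mathcal{F}_t^j$-martingale, and then feed this into Proposition \ref{martingale_proposition}. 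The paper instead \emph{derives} the component-wise martingale property from the assumed martingale property of $\{\sum_j j\mathcal{N}_j(t)-\sum_j j\Lambda_j(t)\}_{t\ge 0}$, by conditioning on $\mathcal{F}_s^j\subseteq\mathcal{F}_s$ via the tower property and using the independence of $\mathcal{N}_i$, $i\neq j$, from $\mathcal{F}_s^j$, and only then applies Proposition \ref{martingale_proposition}. The difference matters: your route never actually uses the hypothesis of the forward implication, which exposes that under the literal wording (all $\mathcal{N}_j$ are NPPs) both sides of the equivalence hold automatically and the theorem is vacuous; it buys brevity at the cost of generality. The paper's route is designed so that the argument survives when the $\mathcal{N}_j$ are merely independent counting processes with jump size $+1$ — which is precisely how this theorem is later invoked in the proof of Theorem \ref{martingale_characterization_compensated_ngcp}, where no NPP assumption is available. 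So your proof discharges the stated theorem but would not support that later application, whereas the paper's does.
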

\begin{proof}
    Suppose that the process $\{\sum_{j=1}^{k}j\mathcal{N}_j(t)-\sum_{j=1}^{k}j\Lambda_j(t)\}_{t\ge 0}$ is a $\mathcal{F}_t$-martingale. Therefore, by definition, we have
    \begin{equation}
        \mathbb{E}\left(\sum_{j=1}^{k}j\mathcal{N}_j(t)-\sum_{j=1}^{k}j\Lambda_j(t)\bigg|\mathcal{F}_s\right)=\sum_{j=1}^{k}j\mathcal{N}_j(s)-\sum_{j=1}^{k}j\Lambda_j(s)\label{ngcp_martingale_equivalence_eq1}
    \end{equation}
    We will first prove that for each $j$, the process $\{\mathcal{N}_j(t)-\Lambda_j(t)\}_{t\ge 0}$ is a martingale with respect to $\mathcal{F}_t^j=\sigma(\{\mathcal{N}_j(s)\},\,s\le t)$. Since $\mathcal{F}_s^j\subseteq\mathcal{F}_s$, using tower property, we have
    \begin{align}
        &\mathbb{E}\left(\sum_{i=1}^{k}i\mathcal{N}_i(t)-\sum_{i=1}^{k}i\Lambda_i(t)\bigg|\mathcal{F}_s^j\right)=\mathbb{E}\left(\mathbb{E}\left(\sum_{i=1}^{k}i\mathcal{N}_i(t)-\sum_{i=1}^{k}i\Lambda_i(t)\bigg|\mathcal{F}_s\right)\bigg|\mathcal{F}_s^j\right)\nonumber\\
        \implies&\mathbb{E}\left(j\mathcal{N}_j(t)-j\Lambda_j(t)+\sum_{i\neq j}^k\left(i\mathcal{N}_i(t)-i\Lambda_i(t)\right)\bigg|\mathcal{F}_s^j\right)=\mathbb{E}\left(\sum_{i=1}^{k}i\mathcal{N}_i(s)-\sum_{i=1}^{k}i\Lambda_i(s)\bigg|\mathcal{F}_s^j\right)\quad(\text{using }\eqref{ngcp_martingale_equivalence_eq1})\nonumber\\
        \implies&\mathbb{E}\left(j\mathcal{N}_j(t)-j\Lambda_j(t)\bigg|\mathcal{F}_s^j\right)=\mathbb{E}\left(j\mathcal{N}_j(s)-j\Lambda_j(s)\bigg|\mathcal{F}_s^j\right)+\sum_{i\neq j}^k\mathbb{E}\left(\left(i\mathcal{N}_i(s)-i\Lambda_i(s)\right)\bigg|\mathcal{F}_s^j\right)\nonumber\\
        \implies&\mathbb{E}\left(\mathcal{N}_j(t)-\Lambda_j(t)\bigg|\mathcal{F}_s^j\right)=\mathcal{N}_j(s)-\Lambda_j(s)\label{ngcp_martingale_equivalence_eq2}
    \end{align}
    where in the last two steps, we have used the independence of $\{\mathcal{N}_i(t)\}_{t\ge 0}$ for $i\neq j$ with respect to $\mathcal{F}_s^j$. 
    Thus \eqref{ngcp_martingale_equivalence_eq2} proves that for each $j$, $\{\mathcal{N}_j(t)-\Lambda_j(t)\}_{t\ge 0}$ is a $\mathcal{F}_t^j$-martingale.
    Therefore, by Proposition \ref{martingale_proposition}, $\{\exp\left(X(t)\right)\}_{t\ge 0}$ is a $\mathcal{F}_t$-martingale. For the converse part, suppose that $\{\exp\left(X(t)\right)\}_{t\ge 0}$ is a $\mathcal{F}_t$-martingale. By definition, we have
    \begin{equation}
        \mathbb{E}\left(\exp\{X(t)\}\big|\mathcal{F}_s\right)=\exp\{X(s)\}.\label{ngcp_martingale_equivalence_eq4}
    \end{equation}
    For fixed $s$ and $t$ with $0<s\le t$, $\exp\{X(t)\}$ is a function of $u$. Since $\exp\{X(t)\}$ is smooth in $u$, differentiating \eqref{ngcp_martingale_equivalence_eq4} with respect to $u$ at $u=0$, we get
    \begin{align}
       &\frac{\partial}{\partial u}\mathbb{E}\left(\exp\{X(t)\}\big|\mathcal{F}_s\right)\bigg|_{u=0}=\frac{\partial}{\partial u}\exp\{X(s)\}\bigg|_{u=0}\nonumber\\ 
       \implies&\mathbb{E}\left(\frac{\partial}{\partial u}\exp\{\sum_{j=1}^{k}uj\mathcal{N}_j(t)-\sum_{j=1}^{k}(e^{uj}-1)\Lambda_j(t)\}\big|_{u=0}\bigg|\mathcal{F}_s\right)\nonumber\\
       &=\left[\left(\sum_{j=1}^{k}j\mathcal{N}_j(s)-\sum_{j=1}^{k}je^{uj}\Lambda_j(s)\right)\exp\{\sum_{j=1}^{k}uj\mathcal{N}_j(s)-\sum_{j=1}^{k}(e^{uj}-1)\Lambda_j(s)\}\right]\bigg|_{u=0}\nonumber\\ 
       \implies&\mathbb{E}\left(\left(\sum_{j=1}^{k}j\mathcal{N}_j(t)-\sum_{j=1}^{k}je^{uj}\Lambda_j(t)\right)\exp\{\sum_{j=1}^{k}uj\mathcal{N}_j(t)-\sum_{j=1}^{k}(e^{uj}-1)\Lambda_j(t)\}\big|_{u=0}\bigg|\mathcal{F}_s\right)\nonumber\\
       &=\left(\sum_{j=1}^{k}j\mathcal{N}_j(s)-\sum_{j=1}^{k}j\Lambda_j(s)\right)\nonumber\\
       \implies&\mathbb{E}\left(\sum_{j=1}^{k}j\mathcal{N}_j(t)-\sum_{j=1}^{k}j\Lambda_j(t)\bigg|\mathcal{F}_s\right)=\sum_{j=1}^{k}j\mathcal{N}_j(s)-\sum_{j=1}^{k}j\Lambda_j(s).\nonumber
    \end{align}
    Since $s$ and $t$ are arbitrary, it follows that $\{\sum_{j=1}^{k}j\mathcal{N}_j(t)-\sum_{j=1}^{k}j\Lambda_j(t)\}_{t\ge 0}$ is a $\mathcal{F}_t$-martingale.
\end{proof}

\subsection{Martingale characterization of NGCP}
We begin with the following result that shows the unique representation of a NGCP as a weighted sum of NPPs where the weights are the jump sizes of the NGCP. 
\begin{theorem}\label{ngcp_npp_equality}
    Let $\{\mathcal{N}_1(t)\}_{t\ge 0}$, $\{\mathcal{N}_2(t)\}_{t\ge 0}$, $\dots\{\mathcal{N}_k(t)\}_{t\ge 0}$ be $k$ independent counting processes such that $\mathcal{N}_j(0)=0$ for $j=1,2,\dots,k$. Then the weighted process $\{\sum_{j=1}^{k}j\mathcal{N}_j(t)\}_{t\ge 0}$ is a NGCP if and only if each $\{\mathcal{N}_j(t)\}_{t\ge 0}$ is a non-homogeneous Poisson process (NPP) with rate $\lambda_j(t)$.
\end{theorem}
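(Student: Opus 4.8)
The plan is to prove the two implications separately, treating the ``if'' direction (each $\mathcal{N}_j$ an NPP $\Rightarrow$ the weighted sum is an NGCP) as the routine one and concentrating the effort on the converse.

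For the ``if'' direction I would set $\mathcal{M}(t)=\sum_{j=1}^{k}j\mathcal{N}_j(t)$ and verify the defining features of an NGCP directly. Since $\mathcal{N}_j(0)=0$ we have $\mathcal{M}(0)=0$, and since the $\mathcal{N}_j$ are mutually independent with each having independent increments (being an NPP), $\mathcal{M}$ inherits independent increments. The transition probabilities follow from a short $o(h)$ computation over $[t,t+h]$: by independence the probability that no component jumps is $\prod_{j}(1-\lambda_j(t)h+o(h))=1-\sum_j\lambda_j(t)h+o(h)$; the probability that only $\mathcal{N}_j$ jumps (once) is $\lambda_j(t)h+o(h)$, contributing an increment of size $j$; and any configuration involving two or more jumps has probability $o(h)$. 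These reproduce exactly the NGCP rates of Subsection~\ref{ngcp}. Equivalently one may match generating functions, $\mathbb{E}[s^{\mathcal{M}(t)}]=\prod_j\mathbb{E}[(s^{j})^{\mathcal{N}_j(t)}]=\prod_j e^{\Lambda_j(t)(s^{j}-1)}=e^{\sum_j\Lambda_j(t)(s^{j}-1)}$.

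For the converse I would first record the identity $\prod_{j=1}^{k}G_{\mathcal{N}_j}(s^{j},t)=\exp\!\big(\sum_{j=1}^{k}\Lambda_j(t)(s^{j}-1)\big)$, where $G_{\mathcal{N}_j}(z,t)=\mathbb{E}[z^{\mathcal{N}_j(t)}]$, but I do not expect this alone to finish the argument: as a relation between formal power series it does not determine the individual marginals $G_{\mathcal{N}_j}$, so the unit-jump structure of the components must be exploited. The key step is therefore to recover each $\mathcal{N}_j$ pathwise from $\mathcal{M}$. Because $\mathcal{M}$ is an NGCP it is stochastically continuous; a fixed time $t_0$ of discontinuity of some $\mathcal{N}_j$ would, as the components have non-negative increments, force $P(\Delta\mathcal{M}(t_0)\ge j)>0$, contradicting the stochastic continuity of $\mathcal{M}$, so each $\mathcal{N}_j$ is stochastically continuous as well. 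Independence together with the absence of fixed discontinuities implies that almost surely no two components jump simultaneously; as each component has unit jumps, every jump of $\mathcal{M}$ is a jump of a single $\mathcal{N}_j$, of size exactly $j$. Hence almost surely $\mathcal{N}_j(t)$ equals the number of size-$j$ jumps of $\mathcal{M}$ in $[0,t]$.

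It then remains to identify this size-$j$ jump-counting process as an NPP, and here I would route through the martingale characterization already established. Since $\mathcal{M}$ has independent increments and the natural filtrations satisfy $\mathcal{F}_s^{\,j}\subseteq\sigma(\mathcal{M}(u):u\le s)$, the increment $\mathcal{N}_j(t)-\mathcal{N}_j(s)$ (the number of size-$j$ jumps in $(s,t]$) is independent of $\mathcal{F}_s^{\,j}$, while the NGCP rates give $\mathbb{E}[\mathcal{N}_j(t)-\mathcal{N}_j(s)]=\int_s^{t}\lambda_j(u)\,du=\Lambda_j(t)-\Lambda_j(s)$. Consequently $\mathbb{E}[\mathcal{N}_j(t)-\Lambda_j(t)\mid\mathcal{F}_s^{\,j}]=\mathcal{N}_j(s)-\Lambda_j(s)$, so $\{\mathcal{N}_j(t)-\Lambda_j(t)\}_{t\ge 0}$ is an $\mathcal{F}_t^{\,j}$-martingale, and Theorem~\ref{watanabe_npp} yields that $\mathcal{N}_j$ is an NPP with cumulative intensity $\Lambda_j$. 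The main obstacle is the converse, and within it the decomposition step: the PGF identity is insufficient, and the real content is the pathwise argument that the unit-jump components carry disjoint jump sets, which is exactly what makes the representation unique and lets the size-$j$ jumps of the NGCP be matched to $\mathcal{N}_j$.
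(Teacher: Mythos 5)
Your proof is correct (read, as you do, with the components being \emph{simple} counting processes with unit jumps), but it follows a genuinely different route from the paper's. For the converse, the paper argues by induction on $k$ combined with moment-generating-function division: assuming the first $m$ components are NPPs, it divides the m.g.f.\ of the NGCP by $\prod_{j=1}^{m}e^{\Lambda_j(t)(e^{uj}-1)}$ to identify the law of $\mathcal{N}_{m+1}(t)$, and repeats the division on joint m.g.f.'s of increments to obtain independent increments of $\mathcal{N}_{m+1}$. Your converse is instead pathwise: stochastic continuity of the NGCP rules out fixed discontinuities of the components, independence then gives almost surely disjoint jump sets, so each $\mathcal{N}_j$ is recovered as the size-$j$ jump counter of $\mathcal{M}$, after which you build the compensated martingale and invoke Theorem~\ref{watanabe_npp}. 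Your route buys three things: it is self-contained (your forward direction is a direct $o(h)$ verification, whereas the paper cites an external distributional identity); it sidesteps the paper's induction, whose inductive step invokes the hypothesis for $\sum_{j=1}^{m}j\mathcal{N}_j(t)$ without ever establishing that this partial sum is itself an NGCP (a real gap in the paper's argument); and it isolates exactly where simplicity of the components is needed, which is in fact essential --- without unit jumps the statement is false, e.g.\ for $k=2$ take $\mathcal{N}_1$ a homogeneous GCP with jump sizes $1,2$ and rates $\lambda_1,\mu$, and $\mathcal{N}_2$ Poisson with rate $\lambda_2-\mu$; then $\mathcal{N}_1(t)+2\mathcal{N}_2(t)$ is a GCP with rates $\lambda_1,\lambda_2$, yet $\mathcal{N}_1$ is not Poisson. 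What the paper's computational route buys is brevity: it needs no analysis of the jump structure at all.

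Two steps in your write-up deserve a line more of justification. First, $\mathbb{E}[\mathcal{N}_j(t)-\mathcal{N}_j(s)]=\Lambda_j(t)-\Lambda_j(s)$ is not immediate from the transition rates of $\mathcal{M}$: you need that the probability of two or more jump epochs of $\mathcal{M}$ in an interval of length $h$ is $o(h)$ (a subdivision argument using independent increments), so that a size-$j$ increment over a short interval is, up to $o(h)$ probability, a single size-$j$ jump. Second, once that is in place, the size-$j$ jump counter has independent increments together with transition probabilities $\lambda_j(t)h+o(h)$ for one jump and $o(h)$ for more than one; that is precisely the definition of an NPP in Subsection~\ref{def:nhpp}, so your detour through the expectation computation and the martingale characterization, while valid, could be compressed into a direct appeal to the definition.
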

\begin{proof}
    Suppose that $\{\mathcal{N}_1(t)\}_{t\ge 0}$, $\{\mathcal{N}_2(t)\}_{t\ge 0}$, $\dots\{\mathcal{N}_k(t)\}_{t\ge 0}$ are $k$ independent NPPs. By Proposition 3.2 of \citet{Tathe2024}, the process $\{\sum_{j=1}^{k}j\mathcal{N}_j(t)\}_{t\ge 0}$ is equal in distribution to a NGCP. For $i\in \mathbb{N}$, consider
    \begin{align}
        \Delta_i\left(\sum_{j=1}^{k}j\mathcal{N}_j(t)\right)=\sum_{j=1}^{k}j\mathcal{N}_j(t_i)-\sum_{j=1}^{k}j\mathcal{N}_j(t_{i-1})=\sum_{j=1}^{k}j\left(\mathcal{N}_j(t_i)-\mathcal{N}_j(t_{i-1})\right).\nonumber
    \end{align}
    Since $\{\mathcal{N}_j(t)\}_{t\ge 0}$ are independent of each other and for fixed $j$, the random variables $\{\Delta_i(\mathcal{N}_j(t))=\mathcal{N}_j(t_i)-\mathcal{N}_j(t_{i-1})\}$ are independent, the collection $\{\Delta_i(\mathcal{N}_j(t)), 1\le j\le k\}$ is independent. As $\Delta_i\left(\sum_{j=1}^{k}j\mathcal{N}_j(t)\right)$ is a linear combination of independent random variables, the collection $\{\Delta_i\left(\sum_{j=1}^{k}j\mathcal{N}_j(t)\right)$, $i\in\mathbb{N}\}$ is independent, i.e., the process $\{\sum_{j=1}^{k}j\mathcal{N}_j(t)\}_{t\ge 0}$ has independent increments. This proves that $\{\sum_{j=1}^{k}j\mathcal{N}_j(t)\}_{t\ge 0}$ is a NGCP. 
    
    We will prove the converse part using the method of mathematical induction. For $k=1$, the result holds true as $\{\mathcal{N}_1(t)\}_{t\ge o}$ is a NPP with rate $\lambda_1(t)$. 
    
    Assume that the result holds true for $k=m$. That is, if the weighted process $\{\sum_{j=1}^{m}j\mathcal{N}_j(t)\}_{t\ge 0}$ is a NGCP, then each process $\{\mathcal{N}_j(t)\}_{t\ge 0}$ is a non-homogeneous Poisson process with rate $\lambda_j(t)$ for $j=1,2,\dots,m$. 
    
    We will now prove the result for $k=m+1$. Consider $m+1$ independent counting processes $\{\mathcal{N}_1(t)\}_{t\ge 0}$, $\{\mathcal{N}_2(t)\}_{t\ge 0}$, $\dots\{\mathcal{N}_{m+1}(t)\}_{t\ge 0}$ with rates $\lambda_1(t),\lambda_2(t),\dots,\lambda_{m+1}(t)$ respectively such that the weighted process $\{\sum_{j=1}^{m+1}j\mathcal{N}_j(t)\}_{t\ge 0}$ is a NGCP. We have
    \begin{equation}
        \mathbb{E}\left(e^{u\sum_{j=1}^{m+1}j\mathcal{N}_{j}(t)}\right)=\mathbb{E}\left(e^{u(m+1)\mathcal{N}_{m+1}(t)}\right)\prod_{j=1}^{m}e^{\Lambda_j(t)(e^{uj}-1)}\label{weighted_sum_equality_eq1}
    \end{equation}
    since $\{\mathcal{N}_1(t)\}_{t\ge 0}$, $\{\mathcal{N}_2(t)\}_{t\ge 0}$, $\dots\{\mathcal{N}_{m}(t)\}_{t\ge 0}$ are independent NPPs by the induction hypothesis. The moment generating function of the NGCP $\{\sum_{j=1}^{m+1}j\mathcal{N}_j(t)\}_{t\ge 0}$ is given by
    \begin{equation}
        \mathbb{E}\left(e^{u\sum_{j=1}^{m+1}j\mathcal{N}_{j}(t)}\right)=e^{\sum_{j=1}^{m+1}\Lambda_j(t)(e^{uj}-1)}.\label{weighted_sum_equality_eq2}
    \end{equation}
    Deviding \eqref{weighted_sum_equality_eq2} by \eqref{weighted_sum_equality_eq1}, we get
    \begin{equation*}
        \mathbb{E}\left(e^{u(m+1)\mathcal{N}_{m+1}(t)}\right)=e^{\Lambda_{m+1}(t)(e^{uj}-1)}.
    \end{equation*}
    Thus $\{\mathcal{N}_{m+1}(t)\}_{t\ge 0}$ is equal in distribution to a NPP. We will now show that $\{\mathcal{N}_{m+1}(t)\}_{t\ge 0}$ has independent increments.
    
    Let $0\le t_0<\ldots<t_r<\infty$. Due to the independence of $m+1$ counting processes, we have
    \begin{align}
        &\mathbb{E}\left(\exp\left(\sum_{i=1}^{r}u_i\sum_{j=1}^{m+1}j\left(\mathcal{N}_{j}(t_i)-\mathcal{N}_{j}(t_{i-1})\right)\right)\right)\nonumber\\
        &=\prod_{i=1}^{r}\mathbb{E}\left(\exp\left(u_i\sum_{j=1}^{m}j\left(\mathcal{N}_{j}(t_i)-\mathcal{N}_{j}(t_{i-1})\right)\right)\right)\mathbb{E}\left(e^{u_i(m+1)\left(\mathcal{N}_{m+1}(t_i)-\mathcal{N}_{m+1}(t_{i-1})\right)}\right)\nonumber\\
        &=\prod_{i=1}^{r}\prod_{j=1}^{m}e^{u_i\left[\Lambda_j(t_i)-\Lambda_j(t_{i-1})\right](e^{u_ij}-1)} \mathbb{E}\left(e^{u_i(m+1)\left(\mathcal{N}_{m+1}(t_i)-\mathcal{N}_{m+1}(t_{i-1})\right)}\right)\label{weighted_sum_equality_eq3}
    \end{align}
    where the last step is obtained using the distribution of increments of $\{\mathcal{N}_{j}(t)\}_{t\ge 0}$ for $1\le j\le m$ which are independent NPPs. Now, using the independent increments of the NGCP $\{\sum_{j=1}^{m+1}j\mathcal{N}_j(t)\}_{t\ge 0}$, we have
    \begin{align}
        \mathbb{E}\left(\exp\left(\sum_{i=1}^{r}u_i\sum_{j=1}^{m+1}j\left(\mathcal{N}_{j}(t_i)-\mathcal{N}_{j}(t_{i-1})\right)\right)\right)&=\prod_{j=1}^{r}\mathbb{E}\left(\exp\left(u_i\sum_{j=1}^{m+1}j\left(\mathcal{N}_{j}(t_i)-\mathcal{N}_{j}(t_{i-1})\right)\right)\right)\nonumber\\
        &=\prod_{i=1}^{r}e^{u_i\sum_{j=1}^{m+1}\left[\Lambda_j(t_i)-\Lambda_j(t_{i-1})\right](e^{u_ij}-1)}\label{weighted_sum_equality_eq4}
    \end{align}
    Dividing \eqref{weighted_sum_equality_eq4} by \eqref{weighted_sum_equality_eq3}, we get
    \begin{align}
        &\prod_{i=1}^{r}e^{u_i\left[\Lambda_{m+1}(t_i)-\Lambda_{m+1}(t_{i-1})\right](e^{u_i(m+1)}-1)}=\prod_{i=1}^{r} \mathbb{E}\left(e^{u_i(m+1)\left(\mathcal{N}_{m+1}(t_i)-\mathcal{N}_{m+1}(t_{i-1})\right)}\right)\nonumber\\
        \implies&e^{\sum_{i=1}^{r}u_i\left[\Lambda_{m+1}(t_i)-\Lambda_{m+1}(t_{i-1})\right](e^{u_i(m+1)}-1)}=\prod_{i=1}^{r} \mathbb{E}\left(e^{u_i(m+1)\left(\mathcal{N}_{m+1}(t_i)-\mathcal{N}_{m+1}(t_{i-1})\right)}\right)\nonumber\\
        \implies&\mathbb{E}\left(e^{\sum_{i=1}^{r}u_i(m+1)\left[\mathcal{N}_{m+1}(t_i)-\mathcal{N}_{m+1}(t_{i-1})\right]}\right)=\prod_{i=1}^{r} \mathbb{E}\left(e^{u_i(m+1)\left(\mathcal{N}_{m+1}(t_i)-\mathcal{N}_{m+1}(t_{i-1})\right)}\right),\nonumber
    \end{align}
    which implies the counting process $\{\mathcal{N}_{m+1}(t)\}_{t\ge 0}$ has independent increments. Thus $\{\mathcal{N}_{m+1}(t)\}_{t\ge 0}$ is a NPP with rate $\lambda_{m+1}(t)$. This proves the theorem. 
\end{proof}
The next result provides the martingale characterization of a NGCP.
\begin{theorem}\label{martingale_characterization_exponential_ngcp}
    Let $\{\mathcal{N}_j(t)\}_{t\ge 0}$ be $k$ independent counting processes with jump size $+1$ and rates $\lambda_j(t)$ for $j=1,2,\dots,k$ such that $\mathcal{N}_j(0)=0$ and $X(t)=\sum_{j=1}^{k}uj\mathcal{N}_j(t)-\sum_{j=1}^{k}(e^{uj}-1)\Lambda_j(t)$, where $\Lambda_j(t)=\int_{o}^{t}\lambda_j(u)du$. Then $\{\exp\left(X(t)\right)\}_{t\ge 0}$ is a $\mathcal{F}_t$-martingale for $\mathcal{F}_t=\sigma(\{\mathcal{N}_1(s),\mathcal{N}_2(s),\dots,\mathcal{N}_k(s)\},\,s\le t)$, $t\ge 0$ if and only if the weighted process $\{\sum_{j=1}^{k}j\mathcal{N}_j(t)\}_{t\ge 0}$ is a NGCP.
\end{theorem}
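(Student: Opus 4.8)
The plan is to prove both implications entirely by chaining together the results already established in this section, so that no new stochastic calculus is needed: this statement is essentially the endpoint that ties together the single-process characterization (Theorem \ref{watanabe_npp}), the weighted-sum decomposition (Theorem \ref{ngcp_npp_equality}), the equivalence of the two martingale forms for the weighted sum (Theorem \ref{martingale_equivalence_ngcp}), and the generating Proposition \ref{martingale_proposition}.

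For the sufficiency direction, I would assume $\{\sum_{j=1}^k j\mathcal{N}_j(t)\}_{t\ge 0}$ is a NGCP. By Theorem \ref{ngcp_npp_equality}, this forces each $\{\mathcal{N}_j(t)\}_{t\ge 0}$ to be a NPP with rate $\lambda_j(t)$. Applying the forward implication of Theorem \ref{watanabe_npp} to each component then gives that $\{\mathcal{N}_j(t)-\Lambda_j(t)\}_{t\ge 0}$ is a $\mathcal{F}_t^j$-martingale for every $j$. This is exactly the hypothesis demanded by Proposition \ref{martingale_proposition}, whose conclusion is precisely that $\{\exp(X(t))\}_{t\ge 0}$ is a $\mathcal{F}_t$-martingale, so I would simply invoke these three results in sequence.

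For the necessity direction, I would assume $\{\exp(X(t))\}_{t\ge 0}$ is a $\mathcal{F}_t$-martingale and first invoke Theorem \ref{martingale_equivalence_ngcp} to pass to the compensated form, obtaining that $\{\sum_{j=1}^k j\mathcal{N}_j(t)-\sum_{j=1}^k j\Lambda_j(t)\}_{t\ge 0}$ is a $\mathcal{F}_t$-martingale. The proof of Theorem \ref{martingale_equivalence_ngcp} already extracts from this, using independence of the $\mathcal{N}_j$ and the tower property, that for each $j$ the process $\{\mathcal{N}_j(t)-\Lambda_j(t)\}_{t\ge 0}$ is a $\mathcal{F}_t^j$-martingale, as recorded in \eqref{ngcp_martingale_equivalence_eq2}; I would reuse that reduction verbatim. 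The converse implication of Theorem \ref{watanabe_npp} then identifies each $\{\mathcal{N}_j(t)\}_{t\ge 0}$ as a NPP with rate $\lambda_j(t)$, and a final application of Theorem \ref{ngcp_npp_equality} yields that $\{\sum_{j=1}^k j\mathcal{N}_j(t)\}_{t\ge 0}$ is a NGCP.

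The only genuine subtlety, and the step I expect to require the most care, is the filtration bookkeeping when moving between the joint filtration $\mathcal{F}_t$ and the marginal filtrations $\mathcal{F}_t^j$. The single-component characterization in Theorem \ref{watanabe_npp} is stated for the natural filtration of that component, so in the sufficiency direction I must verify that the $\mathcal{F}_t^j$-martingale statement is exactly what Proposition \ref{martingale_proposition} requires, and in the necessity direction I must confirm that the reduction to individual compensated martingales respects the correct filtrations. Both points are already settled inside the proofs of Theorems \ref{martingale_equivalence_ngcp} and \ref{ngcp_npp_equality} through independence and the tower property, so no new argument is needed and the proof reduces to assembling the cited implications in the correct order.
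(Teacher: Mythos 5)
Your sufficiency direction (NGCP $\Rightarrow$ exponential martingale) is exactly the paper's: Theorem \ref{ngcp_npp_equality}, then Theorem \ref{watanabe_npp}, then Proposition \ref{martingale_proposition}. Your necessity direction, however, takes a genuinely different route. The paper argues directly from the exponential martingale property: it first notes $\mathbb{E}[\exp(X(t))]=1$ so that $\sum_j j\mathcal{N}_j(t)$ has the NGCP moment generating function, then derives the deterministic conditional-increment identity \eqref{independent_increments_martingale_ngcp1}, and establishes independent increments by iterated conditioning on $\mathcal{F}_{t_{n-1}},\dots,\mathcal{F}_{t_0}$ --- it never decomposes into components and never invokes Theorem \ref{watanabe_npp} in this direction. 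You instead pass to the compensated form via Theorem \ref{martingale_equivalence_ngcp}, extract the component compensated martingales via the tower-property reduction \eqref{ngcp_martingale_equivalence_eq2}, identify each component as a NPP by the converse of Theorem \ref{watanabe_npp}, and reassemble with Theorem \ref{ngcp_npp_equality}. Your route is more modular and exhibits the theorem as a corollary of the scalar Watanabe characterization; the paper's route is self-contained and produces the conditional distribution of increments explicitly. One caveat you should make explicit: Theorem \ref{martingale_equivalence_ngcp} is \emph{stated} under the hypothesis that the $\mathcal{N}_j$ are NPPs, which is unavailable in your necessity direction --- it is essentially what you are trying to prove. Your citation therefore rests on the observation (correct, but worth writing down) that the direction of that theorem's proof you use, namely the differentiation at $u=0$ and the independence/tower reduction, never uses the NPP assumption, only unit jumps, the rates $\lambda_j(t)$, and independence of the components. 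The paper commits the same looseness when it deduces Theorem \ref{martingale_characterization_compensated_ngcp} from Theorems \ref{martingale_equivalence_ngcp} and \ref{martingale_characterization_exponential_ngcp}, so this is a presentational rather than a mathematical gap, but a careful write-up of your argument should address it.
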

\begin{proof}
    Suppose that $\{\exp\left(X(t)\right)=\exp\left(\sum_{j=1}^{k}uj\mathcal{N}_j(t)-\sum_{j=1}^{k}(e^{uj}-1)\Lambda_j(t)\right)\}_{t\ge 0}$ is a $\mathcal{F}_t$-martingale. Then we have $\mathbb{E}\left(\exp\left(X(t)\right)\right)=\mathbb{E}\left(\exp\left(X(0)\right)\right)=1$. Thus
    \begin{equation}
        \mathbb{E}\left(\exp\left(\sum_{j=1}^{k}uj\mathcal{N}_j(t)\right)\right)=\exp\left(\sum_{j=1}^{k}(e^{uj}-1)\Lambda_j(t)\right),\,u\in\mathbb{R}
    \end{equation}
    which implies that the weighted process $\{\sum_{j=1}^{k}j\mathcal{N}_j(t)\}_{t\ge 0}$ is equal in distribution to a NGCP. Again using the martingale property of $\{\exp\left(X(t)\right)\}_{t\ge 0}$, we have for $0 < s \le t$
    \begin{align}
        \mathbb{E}\left(\exp\left(\sum_{j=1}^{k}uj\mathcal{N}_j(t)-\sum_{j=1}^{k}(e^{uj}-1)\Lambda_j(t)\right)\bigg|\mathcal{F}_s\right)&=\exp\left(\sum_{j=1}^{k}uj\mathcal{N}_j(s)-\sum_{j=1}^{k}(e^{uj}-1)\Lambda_j(s)\right)\nonumber\\
        \implies\mathbb{E}\left(\exp\left(u\sum_{j=1}^{k}j(\mathcal{N}_j(t)-\mathcal{N}_j(s))\right)\bigg|\mathcal{F}_s\right)&=\exp\left(\sum_{j=1}^{k}(\Lambda_j(t)-\Lambda_j(s))(e^{uj}-1)\right).\label{independent_increments_martingale_ngcp1}
    \end{align}
    Taking expectation on both sides of \eqref{independent_increments_martingale_ngcp1}, we get
    \begin{equation}
    \mathbb{E}\left(\exp\left(u\sum_{j=1}^{k}j(\mathcal{N}_j(t)-\mathcal{N}_j(s))\right)\right)=\mathbb{E}\left(\exp\left(\sum_{j=1}^{k}(\Lambda_j(t)-\Lambda_j(s))(e^{uj}-1)\right)\right).\label{independent_increments_martingale_ngcp2}
    \end{equation}
    We will now show that the weighted process $\{\sum_{j=1}^{k}j\mathcal{N}_j(t)\}_{t\ge 0}$ has independent increments.
    
    Let $0\le t_0<\ldots<t_n<\infty$. Consider
    \begin{align}
        &\mathbb{E}\left(\exp\left(\sum_{i=1}^{n}u_i\sum_{j=1}^{k}j(\mathcal{N}_j(t_i)-\mathcal{N}_j(t_{i-1}))\right)\right)\nonumber\\
        &=\mathbb{E}\left(\mathbb{E}\left(\exp\left(\sum_{i=1}^{n}u_i\sum_{j=1}^{k}j(\mathcal{N}_j(t_i)-\mathcal{N}_j(t_{i-1}))\right)\bigg|\mathcal{F}_{t_{n-1}}\right)\right)\nonumber\\
        &=\mathbb{E}\left(\exp\left(\sum_{i=1}^{n-1}u_i\sum_{j=1}^{k}j(\mathcal{N}_j(t_i)-\mathcal{N}_j(t_{i-1}))\right)\times\mathbb{E}\left(\exp\left(u_n\sum_{j=1}^{k}j(\mathcal{N}_j(t_n)-\mathcal{N}_j(t_{n-1}))\right)\bigg|\mathcal{F}_{t_{n-1}}\right)\right)\nonumber\\
        &=\mathbb{E}\left(\exp\left(\sum_{i=1}^{n-1}u_i\sum_{j=1}^{k}j(\mathcal{N}_j(t_i)-\mathcal{N}_j(t_{i-1}))\right)\right)\times\exp\left(\sum_{j=1}^{k}(\Lambda_j(t_n)-\Lambda_j(t_{n-1}))(e^{u_nj}-1)\right)\quad(\text{using}~\eqref{independent_increments_martingale_ngcp1})\nonumber\\
        &=\mathbb{E}\left(\exp\left(\sum_{i=1}^{n-1}u_i\sum_{j=1}^{k}j(\mathcal{N}_j(t_i)-\mathcal{N}_j(t_{i-1}))\right)\right)\times\mathbb{E}\left(\exp\left(u_n\sum_{j=1}^{k}j(\mathcal{N}_j(t_n)-\mathcal{N}_j(t_{n-1}))\right)\right)\quad(\text{using}~\eqref{independent_increments_martingale_ngcp2}).\nonumber
    \end{align}
 By applying the conditioning argument successively with respect to the filtrations $\mathcal{F}_{t_{n-2}}, \mathcal{F}_{t_{n-3},}\dots,\mathcal{F}_{t_{0}}$, the above procedure can be iterated. This yields, inductively, 
    \begin{align}
        \mathbb{E}\left(\exp\left(\sum_{i=1}^{n}u_i\sum_{j=1}^{k}j(\mathcal{N}_j(t_i)-\mathcal{N}_j(t_{i-1}))\right)\right)=\prod_{i=1}^{n}\mathbb{E}\left(\exp\left(u_i\sum_{j=1}^{k}j(\mathcal{N}_j(t_i)-\mathcal{N}_j(t_{i-1}))\right)\right)\nonumber
    \end{align}
    which implies the counting process $\{\sum_{j=1}^{k}j\mathcal{N}_j(t)\}_{t\ge 0}$ has independent increments. Thus $\{\sum_{j=1}^{k}j\mathcal{N}_j(t)\}_{t\ge 0}$ is a NGCP. Alternatively, we can show that $\{\sum_{j=1}^{k}j\mathcal{N}_j(t)\}_{t\ge 0}$ is a NGCP as follows. 
    
    Since $\{\exp({X(t)})\}_{t\ge 0}$ is a martingale, we have
    \begin{align}
        &\mathbb{E}\left(\exp\left(\sum_{j=1}^{k}uj\mathcal{N}_j(t)-\sum_{j=1}^{k}(e^{uj}-1)\Lambda_j(t)\right)\bigg|\mathcal{F}_s\right)=\exp\left(\sum_{j=1}^{k}uj\mathcal{N}_j(s)-\sum_{j=1}^{k}(e^{uj}-1)\Lambda_j(s)\right)\nonumber\\
        \implies&\mathbb{E}\left(\frac{\exp\left(\sum_{j=1}^{k}uj\mathcal{N}_j(t)-\sum_{j=1}^{k}(e^{uj}-1)\Lambda_j(t)\right)}{\exp\left(\sum_{j=1}^{k}uj\mathcal{N}_j(s)-\sum_{j=1}^{k}(e^{uj}-1)\Lambda_j(s)\right)}\bigg|\mathcal{F}_s\right)=1\nonumber\\
        \implies&\mathbb{E}\left(e^{u\sum_{j=1}^{k}j\left(\mathcal{N}_j(t)-\mathcal{N}_j(s)\right)}\big|\mathcal{F}_s\right)=e^{\sum_{j=1}^{k}(e^{uj}-1)(\Lambda_j(t)-\Lambda_j(s))}\nonumber\\
        \implies&\sum_{j=1}^{k}j\left(\mathcal{N}_j(t)-\mathcal{N}_j(s)\right)\bigg|\mathcal{F}_s\sim \text{NGCP}\,\, \text{with cumulative rates }\left(\Lambda_j(t)-\Lambda_j(s)\right) \text{for }1\le j\le k.\label{ngcp_conditional_eq}
    \end{align}
    Putting $s=0$ in \eqref{ngcp_conditional_eq}, we conclude that $\{\sum_{j=1}^{k}j\mathcal{N}_j(t)\}_{t\ge 0}$ is equal in distribution to a NGCP with cumulative rates $\Lambda_j(t)$ for $1\le j\le k$. Also, it is clear from \eqref{ngcp_conditional_eq} that $\{\sum_{j=1}^{k}j\mathcal{N}_j(t)\}_{t\ge 0}$ has independent increments since $s$ and $t$ are arbitrary. Hence $\{\sum_{j=1}^{k}j\mathcal{N}_j(t)\}_{t\ge 0}$ is a NGCP.    
     
    For the converse part, assume that the weighted process $\{\sum_{j=1}^{k}j\mathcal{N}_j(t)\}_{t\ge 0}$ is a NGCP. Using Theorem \ref{ngcp_npp_equality} along with Theorem \ref{watanabe_npp} and Proposition \ref{martingale_proposition}, we see that $\{\exp\left(X(t)\right)\}_{t\ge 0}$ is a $\mathcal{F}_t$-martingale. This completes the proof.
\end{proof}
Now we provide an equivalent martingale characterization of NGCP as follows.
\begin{theorem}\label{martingale_characterization_compensated_ngcp}
    Let $\{\mathcal{N}_j(t)\}_{t\ge 0}$ be $k$ independent counting processes with jump size $+1$ and rates $\lambda_j(t)$ for $j=1,2,\dots,k$ such that $\mathcal{N}_j(0)=0$. Then $\{\sum_{j=1}^{k}j\mathcal{N}_j(t)-\sum_{j=1}^{k}j\Lambda_j(t)\}_{t\ge 0}$, where $\Lambda_j(t)=\int_{o}^{t}\lambda_j(u)du$ is a $\mathcal{F}_t$-martingale for $\mathcal{F}_t=\sigma(\{\mathcal{N}_1(s),\mathcal{N}_2(s),\dots,\mathcal{N}_k(s)\},\,s\le t)$, $t\ge 0$ if and only if the weighted process $\{\sum_{j=1}^{k}j\mathcal{N}_j(t)\}_{t\ge 0}$ is a NGCP.
\end{theorem}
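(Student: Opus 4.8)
The plan is to derive this equivalence directly by composing two results already in hand: the equivalence of the compensated and exponential martingale forms for a weighted sum of counting processes, established in Theorem \ref{martingale_equivalence_ngcp}, and the exponential martingale characterization of the NGCP, established in Theorem \ref{martingale_characterization_exponential_ngcp}. The exponential process $\{\exp(X(t))\}_{t\ge 0}$, with $X(t)=\sum_{j=1}^{k}uj\mathcal{N}_j(t)-\sum_{j=1}^{k}(e^{uj}-1)\Lambda_j(t)$, serves as the bridge linking the compensated process $\{\sum_{j=1}^{k}j\mathcal{N}_j(t)-\sum_{j=1}^{k}j\Lambda_j(t)\}_{t\ge 0}$ to the NGCP property.

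Concretely, I would first apply Theorem \ref{martingale_equivalence_ngcp} to conclude that the compensated process is a $\mathcal{F}_t$-martingale if and only if $\{\exp(X(t))\}_{t\ge 0}$ is a $\mathcal{F}_t$-martingale. Then I would apply Theorem \ref{martingale_characterization_exponential_ngcp}, which asserts that $\{\exp(X(t))\}_{t\ge 0}$ is a $\mathcal{F}_t$-martingale if and only if the weighted process $\{\sum_{j=1}^{k}j\mathcal{N}_j(t)\}_{t\ge 0}$ is a NGCP. Chaining the two biconditionals produces the stated equivalence, with the filtration $\mathcal{F}_t=\sigma(\{\mathcal{N}_1(s),\dots,\mathcal{N}_k(s)\},\,s\le t)$ and the rates $\lambda_j(t)$ matching across all three statements.

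Since all the analytic content resides in the two cited theorems, there is no genuine obstacle in the present argument; the only point meriting attention is confirming that the underlying processes and filtrations coincide so that the two theorems may legitimately be composed. If one prefers not to invoke Theorem \ref{martingale_equivalence_ngcp} as a black box, the same conclusion follows by unwinding its two directions: the forward implication uses the tower-property and independence argument to show that each $\{\mathcal{N}_j(t)-\Lambda_j(t)\}_{t\ge 0}$ is a $\mathcal{F}_t^j$-martingale, whence Proposition \ref{martingale_proposition} yields the exponential martingale and Theorem \ref{martingale_characterization_exponential_ngcp} the NGCP property, while the converse follows by differentiating the exponential-martingale identity in $u$ at $u=0$, exactly as in the proof of Theorem \ref{martingale_equivalence_ngcp}.
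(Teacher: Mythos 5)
Your proposal is correct and is essentially identical to the paper's own proof, which consists of the single line ``The result follows using Theorems \ref{martingale_equivalence_ngcp} and \ref{martingale_characterization_exponential_ngcp}.'' Your added remark about checking that the hypotheses line up is well taken (Theorem \ref{martingale_equivalence_ngcp} is formally stated for NPPs rather than general counting processes, though its proof never uses that assumption), and your fallback of unwinding that theorem's proof resolves this cleanly.
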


\begin{proof}
The result follows using Theorems \ref{martingale_equivalence_ngcp} and \ref{martingale_characterization_exponential_ngcp}.     
\end{proof}

\section{Martingale characterizations of fractional variants}\label{sec 4}
\citet{Tathe2025} obtained martingale characterizations for some fractional homogeneous processes. In this section, we investigate martingale characterizations for various time-changed variants of the non-homogeneous processes considered in Section \ref{sec 3} and their Skellam versions. First, note that the martingale characterization of a NGCP (see Theorems \ref{martingale_characterization_exponential_ngcp} and \ref{martingale_characterization_compensated_ngcp}) assumes that the underlying NPPs are independent. However, for fractional variants of NGCP, independence among NPPs is not preserved after time changing them by a common stable and/or inverse stable subordinator. So a different approach is required to establish martingale characterizations in such cases. We obtain the compensated martingale characterizations for these fractional variants. The exponential martingale characterizations follow from their equivalence (see Theorems \ref{martingale_equivalence} and \ref{martingale_equivalence_ngcp}), which is independent of any fractional parameter. We begin with the following lemma (see Lemma 1 of \citet{Aletti2018}) which will be used later.
\begin{lemma}\label{martingale_lemma}
    Let $X$ be a right-continuous martingale. If $T$ and $S$ are stopping times such that $P(T<\infty)=1$ and $\{X(t\wedge T),t\ge 0\}$ is uniformly integrable, then $\mathbb{E}(X(T)|\mathcal{F}_{S\wedge T})=X(S\wedge T)$.
\end{lemma}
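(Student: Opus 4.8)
The plan is to recognize this as a form of Doob's optional sampling theorem for uniformly integrable martingales and to reduce it to the closed-martingale case. First I would introduce the stopped process $M(t):=X(t\wedge T)$, which is again a right-continuous $\mathcal{F}_t$-martingale by optional stopping applied at deterministic (bounded) times. The hypothesis that $\{X(t\wedge T)\}_{t\ge 0}$ is uniformly integrable says precisely that $\{M(t)\}_{t\ge 0}$ is a uniformly integrable martingale, and $M(S\wedge T)=X((S\wedge T)\wedge T)=X(S\wedge T)$ already identifies the right-hand side of the claim.

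Next I would pass to the limit $t\to\infty$. By the martingale convergence theorem, a uniformly integrable martingale converges almost surely and in $L^1$ to an integrable limit $M(\infty)$ and is closed by it, in the sense that $M(t)=\mathbb{E}(M(\infty)\mid\mathcal{F}_t)$ for every $t\ge 0$. The crucial identification here is that $M(\infty)=X(T)$ almost surely: since $P(T<\infty)=1$, for almost every $\omega$ we have $t\wedge T(\omega)\to T(\omega)$, and right-continuity of the paths of $X$ gives $X(t\wedge T)\to X(T)$ pointwise, which must coincide with the almost sure limit $M(\infty)$. Thus $M(t)=\mathbb{E}(X(T)\mid\mathcal{F}_t)$.

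Then I would apply optional sampling for closed martingales to the stopping time $\tau:=S\wedge T$, first noting that $S\wedge T$ is a stopping time (the minimum of two stopping times is a stopping time), so that $\mathcal{F}_{S\wedge T}$ is well defined. Since $M$ is closed by $X(T)$, Doob's theorem yields $\mathbb{E}(X(T)\mid\mathcal{F}_\tau)=M(\tau)$ for this $\tau$. Combining this with $M(S\wedge T)=X(S\wedge T)$ gives $\mathbb{E}(X(T)\mid\mathcal{F}_{S\wedge T})=X(S\wedge T)$, which is the assertion.

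The main obstacle is the careful justification of the closed-martingale representation $M(t)=\mathbb{E}(X(T)\mid\mathcal{F}_t)$: this is exactly where uniform integrability (to secure $L^1$ convergence and closure), finiteness of $T$ (to identify the limit as $X(T)$ rather than an abstract $M(\infty)$), and right-continuity of the paths (to pass the limit inside $X$) all enter simultaneously. Once this representation is in hand, the conclusion is a direct invocation of optional sampling for closed martingales together with the elementary identity $(S\wedge T)\wedge T=S\wedge T$.
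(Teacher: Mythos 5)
Your proof is correct, but there is nothing in the paper to compare it against: the paper does not prove this lemma. It is quoted as Lemma 1 of Aletti et al.\ (2018) and used as a black box in the proof of the Watanabe characterization of the NTFPP (Theorem \ref{Watanabe_characterization_NFPP}). Your proposal therefore supplies the standard argument that the cited reference rests on, and it is the classical one: stop the martingale to form $M(t)=X(t\wedge T)$, use the uniform integrability hypothesis to close $M$ by its a.s.\ and $L^1$ limit $M(\infty)$, identify $M(\infty)=X(T)$ using $P(T<\infty)=1$, and then apply Doob's optional sampling theorem for closed martingales at the stopping time $\tau=S\wedge T$, finishing with $M(S\wedge T)=X(S\wedge T)$. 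Two minor remarks. First, right-continuity is not what identifies the limit: once $t\ge T(\omega)$ one has $t\wedge T(\omega)=T(\omega)$ exactly, so $X(t\wedge T)(\omega)=X(T)(\omega)$ for all large $t$, with no limiting argument inside $X$ needed; right-continuity is instead what legitimizes the stopped-martingale theorem and optional sampling at general (non-discrete) stopping times. Second, those two theorems are usually stated under the usual conditions on the filtration (right-continuity and completeness), which you, like the lemma itself, leave implicit. Neither point affects the validity of the argument.
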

The following result provides the Watanabe characterization of a non-homogeneous time fractional Poisson process (NTFPP) which is obtained by time changing a NPP with an independent inverse stable subordinator.
\begin{theorem}\label{Watanabe_characterization_NFPP}
    Let $\{K(t)\}_{t\ge 0}$ be a simple locally finite point process. Then $\{K(t)\}_{t\ge 0}$ is a NTFPP if and only if there exist a rate function $\lambda:[0,\infty)\to[0,\infty)$ with $\Lambda(t)=\int_{0}^{t}\lambda(u)du<\infty$ and an inverse stable subordinator $\{Y_{\alpha}(t)\}_{t\ge 0}$ such that the process 
    \begin{equation}\label{martingale_form_nfpp}
    \{X(t)\}_{t\ge 0}=\{K(t)-\Lambda(Y_{\alpha}(t))\}_{t\ge 0}    
    \end{equation} 
    is a right-continuous martingale with respect to the induced filtration $\mathcal{F}_t=\sigma\left(K(s),s\le t\right)\vee\sigma\left(Y_{\alpha}(s), s\ge 0\right)$ and for any $T>0$,
    \begin{equation}\label{stopping_times_martingale_nfpp}
        \{X(\tau),\,\tau\,\, \text{is stopping time such that}\,\,Y_{\alpha}(\tau)\le T\}
    \end{equation}
    is uniformly integrable.
\end{theorem}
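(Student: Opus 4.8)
The plan is to prove both implications by transporting the Watanabe-type characterization of the NPP (Theorem \ref{watanabe_npp}) across the random time change $Y_\alpha$ and its inverse, using the optional-sampling result of Lemma \ref{martingale_lemma}. Throughout I write $K(t)=\mathcal{N}(Y_\alpha(t))$ for the NTFPP, where $\{\mathcal{N}(t)\}_{t\ge0}$ is a NPP with compensator $\Lambda$ independent of $\{Y_\alpha(t)\}_{t\ge0}$, and let $\{D_\alpha(t)\}_{t\ge0}$ be the stable subordinator whose inverse is $Y_\alpha$; note that $\sigma(D_\alpha)=\sigma(Y_\alpha)\subseteq\mathcal{F}_0$, so every $D_\alpha(u)$ is $\mathcal{F}_0$-measurable and hence a stopping time.

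For the necessity direction I would start from a NTFPP $K=\mathcal{N}\circ Y_\alpha$. By Theorem \ref{watanabe_npp}, $\{\mathcal{N}(u)-\Lambda(u)\}_{u\ge0}$ is a martingale for the natural filtration $\mathcal{G}_u=\sigma(\mathcal{N}(v),v\le u)$, and by the assumed independence it remains one after adjoining $\sigma(Y_\alpha)$. Since the entire path of $Y_\alpha$ is $\mathcal{F}_0$-measurable, for $0\le s\le t$ the times $Y_\alpha(s)\le Y_\alpha(t)$ can be treated as deterministic after conditioning on $Y_\alpha$; applying the martingale identity for $\mathcal{N}(\cdot)-\Lambda(\cdot)$ at these times and integrating over the law of $Y_\alpha$ gives $\mathbb{E}[X(t)\mid\mathcal{F}_s]=X(s)$ with $X(t)=K(t)-\Lambda(Y_\alpha(t))$. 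Right-continuity is inherited from $K$ and from $\Lambda\circ Y_\alpha$. For the uniform integrability in \eqref{stopping_times_martingale_nfpp}, I would observe that on $\{Y_\alpha(\tau)\le T\}$ monotonicity of $\mathcal{N}$ and $\Lambda$ yields $|X(\tau)|\le\mathcal{N}(T)+\Lambda(T)$, and since $\mathcal{N}(T)$ is Poisson with finite mean $\Lambda(T)$ the whole family is dominated by a single integrable random variable, hence uniformly integrable.

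For sufficiency, suppose $X(t)=K(t)-\Lambda(Y_\alpha(t))$ is a right-continuous $\mathcal{F}_t$-martingale satisfying \eqref{stopping_times_martingale_nfpp}. For $v\le u$ I would apply Lemma \ref{martingale_lemma} with $S=D_\alpha(v)$, $T=D_\alpha(u)$: here $P(T<\infty)=1$ since $D_\alpha(u)<\infty$ a.s., and the required uniform integrability of $\{X(t\wedge D_\alpha(u))\}_{t\ge0}$ follows from \eqref{stopping_times_martingale_nfpp} because $Y_\alpha(t\wedge D_\alpha(u))\le Y_\alpha(D_\alpha(u))=u$. Using $Y_\alpha(D_\alpha(u))=u$, this shows that $\tilde{\mathcal{N}}(u):=K(D_\alpha(u))=X(D_\alpha(u))+\Lambda(u)$ satisfies $\mathbb{E}[\tilde{\mathcal{N}}(u)-\Lambda(u)\mid\tilde{\mathcal{F}}_v]=\tilde{\mathcal{N}}(v)-\Lambda(v)$ for the time-changed filtration $\tilde{\mathcal{F}}_u=\mathcal{F}_{D_\alpha(u)}$, i.e. $\{\tilde{\mathcal{N}}(u)-\Lambda(u)\}_{u\ge0}$ is a martingale. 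Theorem \ref{watanabe_npp} then identifies $\tilde{\mathcal{N}}$ as a NPP with compensator $\Lambda$, and since $\sigma(Y_\alpha)\subseteq\tilde{\mathcal{F}}_0$ its increments are independent of $Y_\alpha$. Inverting the time change via $K(t)=\tilde{\mathcal{N}}(Y_\alpha(t))$ exhibits $K$ as a NTFPP.

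The step I expect to require the most care is the backward time-change inversion, specifically verifying that $\tilde{\mathcal{N}}$ is a genuine simple point process so that Theorem \ref{watanabe_npp} applies. Here the martingale hypothesis does the essential work: on any flat interval of $Y_\alpha$ --- equivalently a jump interval $(D_\alpha(u-),D_\alpha(u))$, which is $\mathcal{F}_0$-measurable --- the compensator $\Lambda(Y_\alpha(\cdot))$ is constant, so for $a<b$ in such an interval the increment $X(b)-X(a)=K(b)-K(a)\ge0$ has vanishing conditional mean and hence is a.s. zero; thus $K$ stays constant on the intervals skipped by $D_\alpha$ and no multiple jumps are collapsed into $\tilde{\mathcal{N}}$. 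This same constancy is exactly what yields the inversion identity $K(t)=\tilde{\mathcal{N}}(Y_\alpha(t))$ despite $D_\alpha(Y_\alpha(t))\ge t$. The uniform-integrability clause \eqref{stopping_times_martingale_nfpp} is precisely what powers the optional-sampling transfer of Lemma \ref{martingale_lemma} in both directions, and is the hypothesis I would track most carefully.
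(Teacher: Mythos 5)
Your proof is correct, and its overall skeleton matches the paper's: necessity by conditioning on the path of $Y_\alpha$ and invoking the compensated-martingale property of the underlying NPP, and sufficiency by inverting the time change through the stopping times $D_\alpha(u)=\inf\{s:Y_\alpha(s)\ge u\}$ (the paper's $Z(u)$), applying Lemma \ref{martingale_lemma} and then Theorem \ref{watanabe_npp}. Two of your sub-arguments, however, genuinely differ, both to your advantage. First, for the uniform integrability of the family \eqref{stopping_times_martingale_nfpp} the paper computes $\mathbb{E}[X^2(\tau)]=\mathbb{E}[\Lambda(Y_\alpha(\tau))]\le\Lambda(T)$ via the law of total variance and concludes from boundedness in $L^2$; you instead dominate the whole family by the single integrable random variable $\mathcal{N}(T)+\Lambda(T)$, using monotonicity of $\mathcal{N}$ and $\Lambda$ on the event $\{Y_\alpha(\tau)\le T\}$ --- shorter, and it avoids any second-moment computation. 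Second, in the converse the paper simply asserts that $K(Z(t))$ is a simple point process and that $K(t)=\mathcal{N}(Y_\alpha(t))$ once $\mathcal{N}:=K\circ Z$ is defined; neither claim is automatic for an arbitrary point process $K$, since jumps of $K$ occurring inside a jump interval of $D_\alpha$ would be collapsed by the composition (destroying simplicity) and would break the inversion identity. Your flat-interval argument --- on an interval where $Y_\alpha$, hence the compensator $\Lambda(Y_\alpha(\cdot))$, is constant, the nonnegative increment $K(b)-K(a)$ equals a martingale increment with zero conditional mean and so vanishes a.s. --- is exactly what rules this out, so you have patched a step the published proof leaves implicit. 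You likewise make explicit two points the paper uses tacitly: that the uniform-integrability clause is what licenses Lemma \ref{martingale_lemma} at the stopping times $D_\alpha(u)$, and that the increments of $\tilde{\mathcal{N}}$ are independent of $Y_\alpha$ (needed for $K=\tilde{\mathcal{N}}\circ Y_\alpha$ to qualify as a NTFPP) because $\sigma(Y_\alpha)$ sits inside the time-zero $\sigma$-field of the time-changed filtration.
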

\begin{proof}
    Let $\{K(t)\}_{t\ge 0}$ be a NTFPP so that $K(t)=\mathcal{N}\left(Y_{\alpha}(t)\right)$, where $\{Y_{\alpha}(t)\}_{t\ge 0}$ is an inverse stable subordinator independent of the NPP $\{\mathcal{N}(t)\}_{t\ge 0}$ with rate function $\lambda(t)$. Then
\begin{equation*}
    X(t):=K(t)-\Lambda\left(Y_\alpha(t)\right)=\mathcal{N}\left(Y_\alpha(t)\right)-\Lambda\left(Y_\alpha(t)\right).
\end{equation*}
First we show that $\{X(t)\}_{t\ge 0}$ is a martingale with respect to $\mathcal{F}_t=\sigma\left(K(s),s\le t\right)\vee\sigma\left(Y_{\alpha}(s), s\ge 0\right)$. For \(0\le s<t\), using the tower property and conditioning on $Y_\alpha(t)$, we have
\begin{equation*}
    E\left[X(t)\mid\mathcal F_s\right]
=E\left[\,E\left[\mathcal{N}\left(Y_\alpha(t)\right)-\Lambda\left(Y_\alpha(t)\right)\;\big|\;Y_\alpha(t)\vee\mathcal F_s\right]\;\Big|\;\mathcal F_s\right].
\end{equation*}
Note that $\mathcal{N}(Y_\alpha(t))|Y_\alpha(t)\sim \text{Poi}(\Lambda(Y_\alpha(t)))$. 
Using the martingale property of a non-homogeneous Poisson process (see Theorem \ref{watanabe_npp}), we get
\[
E\big[\mathcal{N}\big(Y_\alpha(t)\big)-\Lambda\big(Y_\alpha(t)\big)\;\big|\;Y_\alpha(t)\vee\mathcal F_s\big]
= \mathcal{N}\big(Y_\alpha(s)\big)-\Lambda\big(Y_\alpha(s)\big).
\]
Therefore
\[
E\big[X(t)\mid\mathcal F_s\big]=\mathcal{N}\big(Y_\alpha(s)\big)-\Lambda\big(Y_\alpha(s)\big)=X(s),
\]
which proves $\{X(t)\}_{t\ge 0}$ is a martingale. Right-continuity follows from right-continuity of $\mathcal{N}(t)$ and $Y_\alpha(t)$.
   
 Next we show that the family \(\{X(\tau):Y_\alpha(\tau)\le T\}\) is uniformly integrable. Fix \(T>0\) and let \(\tau\) be any stopping time with \(Y_\alpha(\tau)\le T\). Note that  $\mathbb{E}\left[\mathcal{N}(Y_\alpha(\tau))\right]=\mathbb{E}\left[\Lambda(Y_\alpha(\tau))\right]$, which implies 
 \begin{equation}\label{mean_nfpp_martingale}
     \mathbb{E}\left[X(\tau)\right]=\mathbb{E}\left[\mathcal{N}(Y_\alpha(\tau))-\Lambda(Y_\alpha(\tau))\right]=0.
 \end{equation}
Since $Y_{\alpha}(\tau) \le T$ and $\Lambda(t)$ is non-decreasing in $t$, we have $\Lambda\!\left(Y_{\alpha}(\tau)\right) \le \Lambda(T)$. Given that \(\{\mathcal{N}(t)\}_{t\ge 0}\) is a NPP with intensity function \(\lambda(t)\), and that \(\widetilde{X}(t)=X(\tau\wedge t)\) is a martingale bounded in \(L^2\) with \(\widetilde{X}(0)=0\) a.s., it follows that \(\widetilde{X}(t)\) converges to \(X(\tau)\) in \(L^2\). Using the law of total variance, we have 
\begin{align*}
    \mathbb{V}\left(X(\tau)\right)&=\mathbb{E}\left(\mathbb{V}\left[K(\tau)-\Lambda(Y_{\alpha}(\tau))\mid Y_\alpha(\tau)\right]\right)
+\mathbb{V}\left(\mathbb{E}[K(\tau)-\Lambda(Y_{\alpha}(\tau))\mid Y_\alpha(\tau)]\right)\\
\implies\mathbb{V}\left(X(\tau)\right)&=\mathbb{E}\left(\mathbb{V}\left[\mathcal{N}(Y_\alpha(\tau))\mid Y_\alpha(\tau)\right]\right)+\mathbb{V}\left(\mathbb{E}[\mathcal{N}(Y_\alpha(\tau))\mid Y_\alpha(\tau)]-\Lambda(Y_{\alpha}(\tau))\right)\\
    \implies\mathbb{E}\left[X^2(\tau)\right]
&=\mathbb{E}\left(\mathbb{V}\left[\mathcal{N}(Y_\alpha(\tau))\mid Y_\alpha(\tau)\right]\right)+\left(\mathbb{E}\left[X(\tau)\right]\right)^2\quad(\because\mathcal{N}(Y_\alpha(\tau))|Y_\alpha(\tau)\sim \text{Poi}(\Lambda(Y_\alpha(\tau))))\\
&=\mathbb{E}\left(\mathbb{V}\left[\mathcal{N}(Y_\alpha(\tau))\mid Y_\alpha(\tau)\right]\right)\quad(\text{using}~\eqref{mean_nfpp_martingale})\\
&=\mathbb{E}\left(\Lambda[Y_\alpha(\tau)]\right)\le \Lambda(T).
\end{align*}
Therefore the family \eqref{stopping_times_martingale_nfpp} is uniformly bounded in $L^2$, and hence uniformly integrable.

   Conversely, it is enough to prove that $K(t)=\mathcal{N}\left(Y_{\alpha}(t)\right)$, where $\{\mathcal{N}(t)\}_{t\ge 0}$ is a NPP, independent of the inverse stable subordinator $\{Y_{\alpha}(t)\}_{t\ge 0}$ with rate $\lambda(t)>0$. Let $Z(t)=\inf\{s:Y_{\alpha}(s)\ge t\}$ be the inverse of $\{Y_{\alpha}(t)\}_{t\ge 0}$. It can be observed that $\{Z(t)\}_{t\ge 0}$ forms a family of stopping times. Therefore, by Lemma \ref{martingale_lemma}, 
   \begin{equation*}
       X\left(Z(t)\right)=K(Z(t))-\Lambda\left( Y_{\alpha}(Z(t))\right)
   \end{equation*} is still a martingale. As $Y_{\alpha}(.)$ is continuous, we have $Y_{\alpha}(Z(t))=t$ which implies that $\{K(Z(t))-\Lambda(t)\}_{t\ge 0}$ is a martingale. Furthermore, as $Z(t)$ is an increasing process, $K(Z(t))$ is a simple point process. By Theorem \ref{watanabe_npp}, it follows that $K(Z(t))$ is a classical NPP with rate $\lambda(t)>0$. If we define $\mathcal{N}(t)=K(Z(t))$, then the process $\{K(t)=\mathcal{N}(Y_{\alpha}(t))\}_{t\ge 0}$ represents a NTFPP. This proves the theorem.
\end{proof}
A non-homogeneous space fractional Poisson process (NSFPP) is obtained by time changing a NPP with an independent stable subordinator. Since the moments of the NSFPP do not exist, the Watanabe characterization does not apply here. So we will provide the Watanabe characterization of a non-homogeneous tempered space fractional Poisson process (NTSFPP) $\{\mathcal{N}_{\beta,\theta}(t)\}_{t\ge 0}$ and a non-homogeneous tempered space time fractional Poisson process (NTSTFPP) $\{\mathcal{N}^{\alpha}_{\beta,\theta}(t)\}_{t\ge 0}$, which have finite moments and are defined as
\begin{equation*}
    \mathcal{N}_{\beta,\theta}(t):=\mathcal{N}\left(D_{\beta,\theta}(t)\right),\qquad \mathcal{N}^{\alpha}_{\beta,\theta}(t):=\mathcal{N}\left(D_{\beta,\theta}(Y_{\alpha}(t))\right), 
\end{equation*}
where $\{D_{\beta,\theta}(t)\}_{t\ge 0}$ is the tempered stable subordinator (TSS) (see Section \ref{TSS}) and $\{Y_\alpha(t)\}_{t\ge 0}$ is an inverse stable subordinator which are independent of each other and also independent of the NPP $\{\mathcal{N}(t)\}_{t\ge 0}$.
\begin{theorem}\label{Watanabe_characterization_NTSTFPP}
    Let $\{L(t)\}_{t\ge 0}$ be a simple locally finite point process. Then $\{L(t)\}_{t\ge 0}$ is NTSTFPP if and only if there exist a rate function $\lambda:[0,\infty)\to[0,\infty)$ with $\Lambda(t)=\int_{0}^{t}\lambda(u)du<\infty$, a tempered stable subordinator (TSS) $\{D_{\beta,\theta}(t)\}_{t\ge 0}$ and an inverse stable subordinator $\{Y_\alpha(t)\}_{t\ge 0}$ which are independent of each other such that the process 
    \begin{equation*}\label{martingale_form_NTSTFPP}
    \{R(t)\}_{t\ge 0}=\{L(t)-\Lambda(D_{\beta,\theta}(Y_{\alpha}(t)))\}_{t\ge 0}    
    \end{equation*} 
    is a right-continuous martingale with respect to the induced filtration $\mathcal{F}_t=\sigma\left(L(s),s\le t\right)\vee\sigma\left(D_{\beta,\theta}(Y_{\alpha}(s)), s\ge 0\right)$ and for any $T>0$,
    \begin{equation*}\label{stopping_times_martingale_NTSTFPP}
        \{R(\tau),\,\tau\,\, \text{is stopping time such that}\,\,D_{\beta,\theta}(Y_{\alpha}(\tau))\le T\}
    \end{equation*}
    is uniformly integrable.
\end{theorem}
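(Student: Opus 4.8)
The plan is to mirror the structure of the proof of Theorem \ref{Watanabe_characterization_NFPP}, since the NTSTFPP is obtained by composing the inverse stable subordinator $Y_\alpha$ with the tempered stable subordinator $D_{\beta,\theta}$ and then subordinating the NPP. The key observation is that the composite time change $t\mapsto D_{\beta,\theta}(Y_\alpha(t))$ plays exactly the role that $Y_\alpha$ played before, so one can treat $W(t):=D_{\beta,\theta}(Y_\alpha(t))$ as a single (random, non-decreasing, right-continuous) clock and repeat the earlier argument almost verbatim. For the forward direction, I would write $L(t)=\mathcal{N}(W(t))$ with $\mathcal{N}$ a NPP independent of $W$, define $R(t)=\mathcal{N}(W(t))-\Lambda(W(t))$, and verify the martingale property by the same tower-property-plus-conditioning computation: conditioning on $W(t)$ (which contains $W(s)$ since $W$ is non-decreasing), one uses $\mathcal{N}(W(t))\mid W(t)\sim\mathrm{Poi}(\Lambda(W(t)))$ together with the martingale property of the Poisson process to collapse the inner expectation to $\mathcal{N}(W(s))-\Lambda(W(s))=R(s)$. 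Right-continuity follows from right-continuity of $\mathcal{N}$, $D_{\beta,\theta}$, and $Y_\alpha$.

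For uniform integrability I would again establish an $L^2$ bound. First, by the optional-sampling/tower argument one gets $\mathbb{E}[R(\tau)]=0$ for any stopping time $\tau$ with $W(\tau)\le T$. Then, applying the law of total variance conditioned on $W(\tau)$ exactly as in the previous proof, the between-group term vanishes (it is the square of the mean) and one is left with $\mathbb{E}[R^2(\tau)]=\mathbb{E}\big(\mathbb{V}[\mathcal{N}(W(\tau))\mid W(\tau)]\big)=\mathbb{E}[\Lambda(W(\tau))]$. Here the essential monotonicity input is that $\Lambda$ is non-decreasing and $W(\tau)=D_{\beta,\theta}(Y_\alpha(\tau))\le T$, so $\Lambda(W(\tau))\le\Lambda(T)$ and hence $\mathbb{E}[R^2(\tau)]\le\Lambda(T)$. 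This uniform $L^2$ bound over the family of admissible stopping times yields uniform integrability.

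For the converse I would again pass to the inverse clock. Define $Z(t)=\inf\{s:W(s)\ge t\}$, the right-inverse of $W$; since $W$ is non-decreasing and right-continuous, $\{Z(t)\}_{t\ge0}$ is a family of stopping times for the filtration $\mathcal{F}_t$. The given uniform integrability of the family indexed by stopping times with $W(\tau)\le T$ is precisely the hypothesis needed to invoke Lemma \ref{martingale_lemma}, so $R(Z(t))=L(Z(t))-\Lambda(W(Z(t)))$ is again a martingale. Using continuity of the time change to get $W(Z(t))=t$ (so that $\Lambda(W(Z(t)))=\Lambda(t)$), one concludes that $\{L(Z(t))-\Lambda(t)\}_{t\ge0}$ is a martingale, and since $Z$ is increasing, $L(Z(t))$ is a simple point process; Theorem \ref{watanabe_npp} then identifies it as a NPP with rate $\lambda$. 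Setting $\mathcal{N}(t):=L(Z(t))$ recovers $L(t)=\mathcal{N}(W(t))=\mathcal{N}(D_{\beta,\theta}(Y_\alpha(t)))$, i.e.\ a NTSTFPP.

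The main obstacle I anticipate is the claim $W(Z(t))=t$, which was trivial in Theorem \ref{Watanabe_characterization_NFPP} because $Y_\alpha$ is continuous, but is more delicate here: the tempered stable subordinator $D_{\beta,\theta}$ is a pure-jump process and hence \emph{not} continuous, so the composite clock $W=D_{\beta,\theta}\circ Y_\alpha$ has jumps and the identity $W(Z(t))=t$ can fail on the jump set. I would need to argue carefully—either by restricting attention to the range of $W$, by using the right-continuity together with an almost-sure argument that the relevant stopping times $Z(t)$ land in the continuity/hitting structure of $W$, or by invoking that $Y_\alpha$ is continuous and strictly increasing on the relevant set so that the overshoot of $D_{\beta,\theta}$ is handled in the same way the original proof handled the point-process structure. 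Establishing this inversion identity rigorously, and confirming that the $Z(t)$ are genuine stopping times with respect to the enlarged filtration $\mathcal{F}_t$, is where the real work lies.
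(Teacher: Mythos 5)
Your forward direction and uniform-integrability argument are correct and coincide exactly with the paper's intent: the paper's entire proof of this theorem is the one-line remark that it is ``similar to that of Theorem \ref{Watanabe_characterization_NFPP} with $Y_{\alpha}(t)$ replaced by $D_{\beta,\theta}(Y_{\alpha}(t))$,'' which is precisely the substitution $W=D_{\beta,\theta}\circ Y_{\alpha}$ you carry out. The tower-property computation and the law-of-total-variance bound $\mathbb{E}[R^{2}(\tau)]\le\Lambda(T)$ indeed go through verbatim, because they use only that $W$ is a non-decreasing right-continuous clock independent of the underlying NPP.

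The obstacle you flag in the converse, however, is a genuine gap, and it is resolved neither in your proposal nor in the paper. The identity $W(Z(t))=t$ is not merely ``delicate'' here: it fails almost surely for almost every $t$. A tempered stable subordinator is a strictly increasing, driftless, pure-jump process with infinite L\'evy measure, so its closed range is a.s.\ a nowhere dense set of Lebesgue measure zero; since $Y_{\alpha}$ is continuous with range $[0,\infty)$, the range of $W=D_{\beta,\theta}\circ Y_{\alpha}$ coincides with that of $D_{\beta,\theta}$, and hence for a.e.\ $t$ the time $Z(t)$ produces a strict overshoot, $W(Z(t))>t$. Consequently, optional stopping (Lemma \ref{martingale_lemma}) yields only that $L(Z(t))-\Lambda(W(Z(t)))$ is a martingale, and $\Lambda(W(Z(t)))$ cannot be replaced by $\Lambda(t)$; Theorem \ref{watanabe_npp} therefore cannot be applied to $L(Z(\cdot))$, and the candidate $\mathcal{N}(t):=L(Z(t))$ is the wrong object, since it samples $L$ strictly beyond clock value $t$. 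Worse, no pathwise inversion of the kind you sketch can succeed: because the range of $W$ is Lebesgue-null, the putative NPP $\mathcal{N}$ is determined by $L$ only on that null set, so it cannot be recovered from $(L,W)$ at all. A correct converse must either construct $\mathcal{N}$ on an enlarged probability space by filling the gaps of the range of $W$ with independent Poisson increments, or argue distributionally, e.g.\ by optional stopping at the times $Z(\cdot)$ to show that, conditionally on the path of $W$, the process $L$ has independent Poisson increments with means $\Lambda(W(t))-\Lambda(W(s))$ (i.e.\ $L$ is a Cox process directed by $\Lambda\circ W$), which is the law of a NTSTFPP. Either repair requires an idea beyond the substitution; the paper's omitted proof inherits exactly the same defect, so on this point your diagnosis is more careful than the source you were asked to match.
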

\begin{proof}   
The proof is similar to that of Theorem \ref{Watanabe_characterization_NFPP} with $Y_{\alpha}(t)$ replaced by $D_{\beta,\theta}(Y_{\alpha}(t))$ and hence omitted.
\end{proof}
The martingale characterization of a NTSFPP follows from Theorem
\ref{Watanabe_characterization_NTSTFPP} by taking $\alpha=1$ and observing that $Y_{1}(t)=t$ a.s.
Next, we investigate martingale characterizations of the generalized versions of NTFPP, NTSFPP and NTSTFPP which allow multiple arrivals with different rate functions. The next result gives the martingale characterization of a non-homogeneous generalized fractional counting process (NGFCP), the generalized version of NTFPP, defined as
\begin{equation}\label{definition_NGFCP}
    \mathcal{M}^{\alpha}(t):=\mathcal{M}(Y_{\alpha}(t)),
\end{equation}
where $\{\mathcal{M}(t)\}_{t\ge 0}$ is a NGCP and $\{Y_\alpha(t)\}_{t\ge 0}$ is an independent inverse stable subordinator.
\begin{theorem}\label{Watanabe_characterization_NGFCP}
    The process $\{\mathcal{M}^\alpha(t)\}_{t\ge 0}$ is a NGFCP if and only if there exist $k$ simple locally finite point processes $\{\mathcal{N}_j^\alpha(t)\}_{t\ge 0}$, $1\le j\le k$ such that $\mathcal{M}^\alpha(t)=\sum_{j=1}^{k}j\mathcal{N}_j^\alpha(t)$ and $X_j(t)=\mathcal{N}_j^\alpha(t)-\Lambda_j(Y_\alpha(t))$, $t\ge 0$ is a right continuous $\{\mathcal{F}_t^j\}_{t\ge 0}$-martingale, where $\Lambda_j(t)=\int_{0}^{t}\lambda_j(u)du<\infty$ for some rate function $\lambda:[0,\infty)\to[0,\infty)$, $\{Y_\alpha(t)\}_{t\ge 0}$ is an independent inverse stable subordinator and $\mathcal{F}_t^j=\sigma\left(\mathcal{N}_j^\alpha(s),s\le t\right)\vee\sigma\left(Y_\alpha(s),s\ge 0\right)$. Also for any $T>0$, $\{X_j(\tau),\,\tau\,\, \text{is stopping time such that}\,\,Y_{\alpha}(\tau)\le T\}$ is uniformly integrable.
\end{theorem}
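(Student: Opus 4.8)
The plan is to reduce the statement to two results already in hand: the structural decomposition of a NGCP into \emph{independent} NPPs (Theorem \ref{ngcp_npp_equality}) and the Watanabe-type characterization of the NTFPP (Theorem \ref{Watanabe_characterization_NFPP}). The point is that the required decomposition $\mathcal{M}^\alpha(t)=\sum_{j=1}^{k}j\mathcal{N}_j^\alpha(t)$ is simply the weighted-sum representation of the underlying NGCP carried through a common inverse-stable time change, and each summand $\mathcal{N}_j^\alpha$ is itself an inverse-stable time change of a NPP. Hence Theorem \ref{Watanabe_characterization_NFPP} can be applied one component at a time, and the main theorem becomes a bookkeeping assembly of these componentwise facts.

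For the forward implication I would assume $\{\mathcal{M}^\alpha(t)\}_{t\ge 0}$ is a NGFCP, so that $\mathcal{M}^\alpha(t)=\mathcal{M}(Y_\alpha(t))$ for a NGCP $\{\mathcal{M}(t)\}_{t\ge 0}$ and an independent inverse stable subordinator $\{Y_\alpha(t)\}_{t\ge 0}$. First I would invoke Theorem \ref{ngcp_npp_equality} to write $\mathcal{M}(t)=\sum_{j=1}^{k}j\mathcal{N}_j(t)$, where $\{\mathcal{N}_j(t)\}_{t\ge 0}$ are independent NPPs with rates $\lambda_j$. Setting $\mathcal{N}_j^\alpha(t):=\mathcal{N}_j(Y_\alpha(t))$ gives $\mathcal{M}^\alpha(t)=\sum_{j=1}^{k}j\mathcal{N}_j^\alpha(t)$ and exhibits each $\mathcal{N}_j^\alpha$ as a NTFPP. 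The forward direction of Theorem \ref{Watanabe_characterization_NFPP}, applied to each $j$, then shows that $X_j(t)=\mathcal{N}_j^\alpha(t)-\Lambda_j(Y_\alpha(t))$ is a right-continuous $\mathcal{F}_t^j$-martingale and that $\{X_j(\tau):Y_\alpha(\tau)\le T\}$ is uniformly integrable, which is exactly the stated conclusion.

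For the converse I would start from the hypotheses $\mathcal{M}^\alpha(t)=\sum_{j=1}^{k}j\mathcal{N}_j^\alpha(t)$ together with the right-continuous $\mathcal{F}_t^j$-martingale property and uniform integrability of each $X_j$. The converse direction of Theorem \ref{Watanabe_characterization_NFPP} then applies to each $j$ and produces a NPP $\{\mathcal{N}_j(t)\}_{t\ge 0}$ with rate $\lambda_j$ such that $\mathcal{N}_j^\alpha(t)=\mathcal{N}_j(Y_\alpha(t))$; concretely, de-subordinating through the right inverse $Z(t)=\inf\{s:Y_\alpha(s)\ge t\}$ recovers $\mathcal{N}_j(t)=\mathcal{N}_j^\alpha(Z(t))$, which is independent of $Y_\alpha$. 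Since the single subordinator $Y_\alpha$ is shared across all components, $\mathcal{M}^\alpha(t)=\sum_{j=1}^{k}j\mathcal{N}_j(Y_\alpha(t))=\bigl(\sum_{j=1}^{k}j\mathcal{N}_j\bigr)(Y_\alpha(t))$, so it remains only to identify $\sum_{j=1}^{k}j\mathcal{N}_j$ as a NGCP and then appeal to the definition \eqref{definition_NGFCP}.

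The hard part is precisely this last identification: to invoke Theorem \ref{ngcp_npp_equality} and conclude that $\sum_{j=1}^{k}j\mathcal{N}_j$ is a NGCP, the de-subordinated NPPs $\{\mathcal{N}_j\}_{1\le j\le k}$ must be mutually independent, whereas the hypotheses only furnish \emph{marginal} martingale information, each stated with respect to the single-component filtration $\mathcal{F}_t^j$. As emphasized at the start of Section \ref{sec 4}, the common time change $Y_\alpha$ destroys independence at the level of the $\mathcal{N}_j^\alpha$, so this obstacle is genuine and cannot be dodged by the componentwise arguments alone. I would resolve it by carrying along the natural assumption (automatic in the forward construction) that the processes $\mathcal{N}_j^\alpha$ are conditionally independent given $\sigma(Y_\alpha(s),\,s\ge 0)$: because $Z$ is $\sigma(Y_\alpha)$-measurable, de-subordination preserves this conditional independence, and combining it with the fact that each recovered $\mathcal{N}_j$ is a NPP independent of $Y_\alpha$ upgrades it to unconditional mutual independence of the family $\{\mathcal{N}_j\}$. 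Once independence is in place, Theorem \ref{ngcp_npp_equality} gives that $\sum_{j=1}^{k}j\mathcal{N}_j$ is a NGCP, whence $\mathcal{M}^\alpha$ is a NGFCP by \eqref{definition_NGFCP}, completing the argument.
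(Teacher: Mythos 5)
Your proposal has the same skeleton as the paper's proof: the forward direction is exactly the paper's argument (decompose the underlying NGCP via Theorem \ref{ngcp_npp_equality}, push the decomposition through the common time change, then apply Theorem \ref{Watanabe_characterization_NFPP} componentwise), and your converse likewise runs through the componentwise converse of Theorem \ref{Watanabe_characterization_NFPP} followed by Theorem \ref{ngcp_npp_equality} and definition \eqref{definition_NGFCP}. Where you genuinely differ is in how the independence problem in the converse is handled, and your treatment is the more careful one. The paper resolves it by silently inserting the word \emph{independent} into the converse hypotheses (``$k$ independent simple locally finite point processes''), an assumption that appears nowhere in the theorem statement and that is in fact never satisfied in the intended situation: components sharing the common random clock $Y_\alpha$ are dependent, as the paper itself stresses at the start of Section \ref{sec 4}, so the paper's converse hypothesis is inconsistent with its own forward direction. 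You instead isolate the hypothesis that actually holds in the forward construction---conditional independence of the $\mathcal{N}_j^\alpha$ given $\sigma(Y_\alpha(s),\,s\ge 0)$---and your upgrade argument is sound: conditional independence passes to the de-subordinated processes $\mathcal{N}_j=\mathcal{N}_j^\alpha\circ Z$ because $Z$ is $\sigma(Y_\alpha)$-measurable, and combined with the independence of each $\mathcal{N}_j$ from $Y_\alpha$ (supplied by Theorem \ref{Watanabe_characterization_NFPP}) it yields joint independence of $(\mathcal{N}_1,\dots,\mathcal{N}_k,Y_\alpha)$, which is precisely what Theorem \ref{ngcp_npp_equality} and definition \eqref{definition_NGFCP} require. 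One caveat: what you prove is an amended statement, since the theorem's converse is false as literally written---taking every $\mathcal{N}_j^\alpha$ equal to one and the same NTFPP satisfies all stated hypotheses, yet the weighted sum then jumps only by $k(k+1)/2>k$ and so cannot be a NGFCP with jump sizes $1,\dots,k$. But the paper's own proof suffers from the same defect in a worse form, so this is a criticism of the statement rather than of your argument; your version is the one that should replace it.
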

\begin{proof}
    Let $\{\mathcal{N}_1(t)\}_{t\ge 0}$, $\{\mathcal{N}_2(t)\}_{t\ge 0}$,~$\dots,\{\mathcal{N}_k(t)\}_{t\ge 0}$ be $k$ independent NPPs with respective rate functions $\lambda_1(t)$, $\lambda_2(t)$, $\dots,~\lambda_k(t)$. Let $\{Y_\alpha(t)\}_{t\ge 0}$ be an inverse stable subordinator independent of these NPPs. Then, by Theorem \ref{ngcp_npp_equality}, we have $\mathcal{M}(t)=\sum_{j=1}^{k}j\mathcal{N}_j(t)$ which implies $\mathcal{M}(Y_\alpha(t))=\sum_{j=1}^{k}j\mathcal{N}_j(Y_\alpha(t))$, that is, $\mathcal{M}^\alpha(t)=\sum_{j=1}^{k}j\mathcal{N}_j^\alpha(t)$ where $\{\mathcal{N}_j^\alpha(t)\}_{t\ge 0}$, $1\le j\le k$ are NTFPPs. Theorem \ref{Watanabe_characterization_NFPP} implies that $\{X_j(t)=\mathcal{N}_j^\alpha(t)-\Lambda_j(Y_\alpha(t))\}_{\t\ge 0}$ is a $\{\mathcal{F}_t^j\}_{t\ge 0}$-martingale and that for any $T>0$, the family $\{X_j(\tau),\,\tau\,\,$ is stopping time such\\ that$\,\,Y_{\alpha}(\tau)\le T\}$ is uniformly integrable. 
    
    Conversely, suppose that $\mathcal{M}^\alpha(t)=\sum_{j=1}^{k}j\mathcal{N}_j^\alpha(t)$, where $\{\mathcal{N}_j^\alpha(t)\}_{t\ge 0}$ are $k$ independent simple locally finite point processes such that $\{X_j(t)=\mathcal{N}_j^\alpha(t)-\Lambda_j(Y_\alpha(t))\}_{t\ge 0}$, $t\ge 0$ are right continuous $\{\mathcal{F}_t^j\}_{t\ge 0}$-martingales. Furthermore, suppose that for any $T>0$, the family $\{X_j(\tau),\,\tau\,\, \text{is stopping time such that}\,\,Y_{\alpha}(\tau)\le T\}$ is uniformly integrable. Then, by Theorem \ref{Watanabe_characterization_NFPP}, it follows that $\{\mathcal{N}_j^\alpha(t)\}_{t\ge 0}$ is a NTFPP. Consequently, using Theorem \ref{ngcp_npp_equality} along with definition \eqref{definition_NGFCP} of a NGFCP, we conclude that $\{\mathcal{M}^\alpha(t)\}_{t\ge 0}$ is a NGFCP.
\end{proof}
\begin{remark}
    For $\alpha=1$, Theorem \ref{Watanabe_characterization_NGFCP} reduces to the martingale characterization of a NGCP given by Theorem \ref{martingale_characterization_compensated_ngcp}.
\end{remark}
Since the weighted sum of martingales is a martingale for a suitable choice of filtration, the martingale property of a NGFCP shown below follows from Theorem \ref{Watanabe_characterization_NGFCP}.
\begin{corollary}\label{martingale_corollary_NGFCP}
    Let $\{\mathcal{M}^\alpha(t)\}_{t\ge 0}$ be a NGFCP. Then, the process $\{\mathcal{M}^\alpha(t)-\sum_{j=1}^{k}j\Lambda_j\left(Y_{\alpha}(t)\right)\}_{t\ge 0}$ is a $\{\mathcal{F}_t\}_{t\ge 0}$-martingale, where $\Lambda_j(t)=\int_{0}^{t}\lambda_j(u)du<\infty$ for some rate function $\lambda:[0,\infty)\to[0,\infty)$, $\{Y_\alpha(t)\}_{t\ge 0}$ is an inverse stable subordinator and $\mathcal{F}_t=\sigma\left(\mathcal{N}_1^\alpha(s),~\mathcal{N}_2^\alpha(s),\dots,~\mathcal{N}_k^\alpha(s),~s\le t\right)\vee\sigma\left(Y_\alpha(s),s\ge 0\right)$.
\end{corollary}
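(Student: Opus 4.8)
The plan is to reduce the claim to the single-index martingale property already supplied by Theorem \ref{Watanabe_characterization_NGFCP} and then pass to a finite linear combination. First I would invoke that theorem to write $\mathcal{M}^\alpha(t)=\sum_{j=1}^{k}j\,\mathcal{N}_j^\alpha(t)$ with $\mathcal{N}_j^\alpha(t)=\mathcal{N}_j(Y_\alpha(t))$ for independent NPPs $\{\mathcal{N}_j(t)\}_{t\ge 0}$ and an independent inverse stable subordinator $\{Y_\alpha(t)\}_{t\ge 0}$, so that each $X_j(t)=\mathcal{N}_j^\alpha(t)-\Lambda_j(Y_\alpha(t))$ is a right-continuous $\{\mathcal{F}_t^j\}_{t\ge 0}$-martingale, with $\mathcal{F}_t^j=\sigma(\mathcal{N}_j^\alpha(s),s\le t)\vee\sigma(Y_\alpha(s),s\ge 0)$. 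The target process then factors as
\begin{equation*}
\mathcal{M}^\alpha(t)-\sum_{j=1}^{k}j\,\Lambda_j(Y_\alpha(t))=\sum_{j=1}^{k}j\,X_j(t),
\end{equation*}
so integrability and right-continuity of the left-hand side are inherited from those of the finitely many $X_j$.

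The substantive step is to upgrade each $X_j$ from an $\{\mathcal{F}_t^j\}$-martingale to an $\{\mathcal{F}_t\}$-martingale, where $\mathcal{F}_t=\sigma(\mathcal{N}_1^\alpha(s),\dots,\mathcal{N}_k^\alpha(s),s\le t)\vee\sigma(Y_\alpha(s),s\ge 0)$ carries the additional information of the sibling processes $\{\mathcal{N}_i^\alpha:i\ne j\}$. This is exactly where the independence lost under the common time change (as noted at the start of Section \ref{sec 4}) must be recovered by conditioning on the driving subordinator. I would set $\mathcal{G}=\sigma(Y_\alpha(s),s\ge 0)$ and observe that, conditionally on $\mathcal{G}$, the paths $\{\mathcal{N}_j(Y_\alpha(\cdot))\}_{j=1}^{k}$ are independent, since the underlying NPPs are independent of one another and of $Y_\alpha$. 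Because $X_j(t)$ is measurable with respect to $\sigma(\mathcal{N}_j(u),u\ge 0)\vee\mathcal{G}$, the extra generators of $\mathcal{F}_s$ lie in $\sigma(\mathcal{N}_i(u),i\ne j,u\ge 0)\vee\mathcal{G}$, and $\mathcal{G}\subseteq\mathcal{F}_s^j$, the contraction property of conditional independence yields $X_j(t)\ci\sigma(\mathcal{N}_i^\alpha(r):i\ne j,\,r\le s)\mid\mathcal{F}_s^j$.

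With this in hand I would apply the standard fact that $Z\ci\mathcal{B}\mid\mathcal{A}$ implies $\mathbb{E}[Z\mid\mathcal{A}\vee\mathcal{B}]=\mathbb{E}[Z\mid\mathcal{A}]$, giving
\begin{equation*}
\mathbb{E}[X_j(t)\mid\mathcal{F}_s]=\mathbb{E}[X_j(t)\mid\mathcal{F}_s^j]=X_j(s),\qquad 0\le s\le t,
\end{equation*}
where the final equality is the martingale property from Theorem \ref{Watanabe_characterization_NGFCP}. Multiplying by the weights $j$ and summing then yields
\begin{equation*}
\mathbb{E}\Big[\mathcal{M}^\alpha(t)-\sum_{j=1}^{k}j\,\Lambda_j(Y_\alpha(t))\,\Big|\,\mathcal{F}_s\Big]=\sum_{j=1}^{k}j\,X_j(s)=\mathcal{M}^\alpha(s)-\sum_{j=1}^{k}j\,\Lambda_j(Y_\alpha(s)),
\end{equation*}
which is the desired martingale identity. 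I expect the conditional-independence reduction of the second paragraph to be the only delicate point: the siblings $\mathcal{N}_i^\alpha$ are genuinely dependent through the shared time change $Y_\alpha$, and the individual martingale property survives the enlargement of the filtration only because the entire path of $Y_\alpha$ is placed inside $\mathcal{F}_t$, which is precisely what permits conditioning on $\mathcal{G}$ first and restoring independence.
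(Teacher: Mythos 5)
Your proposal is correct, and it follows the same overall route as the paper: decompose $\mathcal{M}^\alpha(t)=\sum_{j=1}^{k}j\,\mathcal{N}_j^\alpha(t)$ via Theorem \ref{Watanabe_characterization_NGFCP}, note that each $X_j(t)=\mathcal{N}_j^\alpha(t)-\Lambda_j(Y_\alpha(t))$ is a martingale, and sum with weights $j$. The difference is in what is actually justified. The paper disposes of the corollary with a single sentence --- ``since the weighted sum of martingales is a martingale for a suitable choice of filtration'' --- which silently passes over the real issue: Theorem \ref{Watanabe_characterization_NGFCP} only gives that $X_j$ is a martingale with respect to its \emph{own} filtration $\mathcal{F}_t^j$, and a sum of martingales adapted to different filtrations need not be a martingale for the joint filtration $\mathcal{F}_t$. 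Your second paragraph supplies exactly the missing step: because the whole path $\sigma(Y_\alpha(s),s\ge 0)$ is included in every $\mathcal{F}_t^j$, the processes $\mathcal{N}_i\circ Y_\alpha$ are conditionally independent given $Y_\alpha$, which gives $X_j(t)\ci\sigma(\mathcal{N}_i^\alpha(r):i\ne j,\,r\le s)\mid\mathcal{F}_s^j$, and hence $\mathbb{E}[X_j(t)\mid\mathcal{F}_s]=\mathbb{E}[X_j(t)\mid\mathcal{F}_s^j]=X_j(s)$ by the standard fact that conditional independence lets you drop the extra generators. This upgrade of each $X_j$ to an $\mathcal{F}_t$-martingale is precisely what the paper's ``suitable choice of filtration'' phrase presupposes without proof; note that this is also the point flagged at the start of Section \ref{sec 4}, where the authors concede that independence of the components is destroyed by the common time change. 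So your argument is not a different proof so much as a rigorous completion of the paper's sketch: the paper's version is shorter, yours is the one that would survive refereeing scrutiny of the filtration-enlargement step.
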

\begin{remark}
Taking $\alpha=1$ in Corollary \ref{martingale_corollary_NGFCP} we observe that for a NGCP $\{\mathcal{M}(t)\}_{t\ge 0}$, the process $\{\mathcal{M}(t)-\sum_{j=1}^{k}j\Lambda_j(t)\}_{t\ge 0}$ is a $\{\mathcal{F}_t\}_{t\ge 0}$-martingale where $\mathcal{F}_t = \sigma(\mathcal{M}(s),s\le t$\} (see \citet{Kataria2025}).
\end{remark}
The next result gives the martingale characterization of a non-homogeneous tempered generalized space time fractional counting process (NTGSTFCP), the generalized version of NTSTFPP, defined as
\begin{equation}\label{definition_NTGSTFCP}
    \mathcal{M}^{\alpha}_{\beta,\theta}(t)=\mathcal{M}\left(D_{\beta,\theta}(Y_{\alpha}(t))\right),
\end{equation}
where $\{D_{\beta,\theta}(t)\}_{t\ge 0}$ is a tempered stable subordinator and $\{Y_{\alpha}(t)\}_{t\ge 0}$ is an inverse stable subordinator independent of each other and also independent of the NGCP $\{\mathcal{M}(t)\}_{t\ge 0}$.
\begin{theorem}\label{Watanabe_characterization_NTGSTFCP}
    The process $\{\mathcal{M}^\alpha_{\beta,\theta}(t)\}_{t\ge 0}$ is a NTGSTFCP if and only if there exist $k$ simple locally finite point processes $\{\mathcal{N}^\alpha_{j\beta,\theta}(t)\}_{t\ge 0}$, $1\le j\le k$ such that $\mathcal{M}^\alpha_{\beta,\theta}(t)=\sum_{j=1}^{k}j\mathcal{N}^{\alpha}_{j\beta,\theta}(t)$ and $X_j(t)=\mathcal{N}^{\alpha}_{j\beta,\theta}(t)-\Lambda_j(D_{\beta,\theta}(Y_{\alpha}((t)))$, $t\ge 0$ is a right continuous $\{\mathcal{F}_t^j\}_{t\ge 0}$-martingale, where $\Lambda_j(t)=\int_{0}^{t}\lambda_j(u)du<\infty$ for some rate function $\lambda_j:[0,\infty)\to[0,\infty)$, $\{D_{\beta,\theta}(t)\}_{t\ge 0}$ is a tempered stable subordinator, $\{Y_{\alpha}(t)\}_{t\ge 0}$ is an independent inverse stable subordinator and $\mathcal{F}_t^j=\sigma\left(\mathcal{N}^{\alpha}_{j\beta,\theta}(s),s\le t\right)\vee\sigma\left(D_{\beta,\theta}(Y_{\alpha}((s)),s\ge 0\right)$. Moreover, for any $T>0$, $\{X_j(\tau),\tau\,\, \text{is stopping time such that}\,\,D_{\beta,\theta}(Y_{\alpha}((\tau))\le T\}$ is uniformly integrable.
\end{theorem}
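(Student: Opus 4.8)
The plan is to mirror exactly the proof strategy of Theorem \ref{Watanabe_characterization_NGFCP}, replacing the inverse stable time change $Y_\alpha(t)$ by the composed subordinator $D_{\beta,\theta}(Y_\alpha(t))$ and invoking the single-process characterization of the NTSTFPP (Theorem \ref{Watanabe_characterization_NTSTFPP}) in place of that of the NTFPP. The key structural fact that makes this reduction work is the decomposition of a NGCP into a weighted sum of independent NPPs (Theorem \ref{ngcp_npp_equality}), which is preserved under a common time change even though the resulting time-changed components cease to be independent. For this reason the characterization must be stated and proved componentwise, using the individual filtrations $\mathcal{F}_t^j=\sigma\left(\mathcal{N}^{\alpha}_{j\beta,\theta}(s),s\le t\right)\vee\sigma\left(D_{\beta,\theta}(Y_{\alpha}(s)),s\ge 0\right)$ rather than a single joint filtration.

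For the forward direction, I would start from $k$ independent NPPs $\{\mathcal{N}_j(t)\}_{t\ge 0}$ with rates $\lambda_j(t)$ together with the independent subordinators $\{D_{\beta,\theta}(t)\}_{t\ge 0}$ and $\{Y_\alpha(t)\}_{t\ge 0}$. By Theorem \ref{ngcp_npp_equality} the NGCP admits the representation $\mathcal{M}(t)=\sum_{j=1}^{k}j\mathcal{N}_j(t)$, and composing with $D_{\beta,\theta}(Y_\alpha(t))$ yields
\begin{equation*}
\mathcal{M}^{\alpha}_{\beta,\theta}(t)=\mathcal{M}\left(D_{\beta,\theta}(Y_\alpha(t))\right)=\sum_{j=1}^{k}j\,\mathcal{N}_j\left(D_{\beta,\theta}(Y_\alpha(t))\right)=\sum_{j=1}^{k}j\,\mathcal{N}^{\alpha}_{j\beta,\theta}(t),
\end{equation*}
so that each $\{\mathcal{N}^{\alpha}_{j\beta,\theta}(t)\}_{t\ge 0}$ is a NTSTFPP. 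Applying Theorem \ref{Watanabe_characterization_NTSTFPP} to each component then delivers directly that $X_j(t)=\mathcal{N}^{\alpha}_{j\beta,\theta}(t)-\Lambda_j(D_{\beta,\theta}(Y_\alpha(t)))$ is a right-continuous $\{\mathcal{F}_t^j\}_{t\ge 0}$-martingale and that for every $T>0$ the family $\{X_j(\tau):D_{\beta,\theta}(Y_\alpha(\tau))\le T\}$ is uniformly integrable.

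For the converse, I would assume the decomposition $\mathcal{M}^{\alpha}_{\beta,\theta}(t)=\sum_{j=1}^{k}j\,\mathcal{N}^{\alpha}_{j\beta,\theta}(t)$ together with the stated martingale and uniform integrability hypotheses on each $X_j$. Theorem \ref{Watanabe_characterization_NTSTFPP} then identifies each $\{\mathcal{N}^{\alpha}_{j\beta,\theta}(t)\}_{t\ge 0}$ as a NTSTFPP, i.e.\ $\mathcal{N}^{\alpha}_{j\beta,\theta}(t)=\mathcal{N}_j(D_{\beta,\theta}(Y_\alpha(t)))$ for an underlying NPP $\mathcal{N}_j$ with rate $\lambda_j(t)$; combining this with Theorem \ref{ngcp_npp_equality} and the defining relation \eqref{definition_NTGSTFCP} shows $\{\mathcal{M}^{\alpha}_{\beta,\theta}(t)\}_{t\ge 0}$ is a NTGSTFCP. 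The only genuinely substantive work lies in Theorem \ref{Watanabe_characterization_NTSTFPP}, whose proof is itself only sketched as ``similar to Theorem \ref{Watanabe_characterization_NFPP}''; the main obstacle one should verify carefully is that the uniform integrability argument still closes, since it rests on the $L^2$-bound $\mathbb{E}[X_j^2(\tau)]=\mathbb{E}\left(\Lambda_j(D_{\beta,\theta}(Y_\alpha(\tau)))\right)\le \Lambda_j(T)$, which requires the monotonicity of $\Lambda_j$ and the finiteness of the mean of the composed subordinator (guaranteed by the finite moments of the TSS). Granting Theorem \ref{Watanabe_characterization_NTSTFPP}, the present theorem is then essentially a bookkeeping exercise in assembling the $k$ componentwise characterizations, and the proof may reasonably be stated as following that of Theorem \ref{Watanabe_characterization_NGFCP} mutatis mutandis.
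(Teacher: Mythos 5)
Your proposal is correct and is essentially identical to the paper's own argument: the paper proves this theorem by declaring it ``similar to Theorem \ref{Watanabe_characterization_NGFCP} with $Y_{\alpha}(t)$ replaced by $D_{\beta,\theta}(Y_{\alpha}(t))$,'' and your write-up is exactly that substitution carried out in detail, using Theorem \ref{ngcp_npp_equality} for the weighted-sum decomposition and Theorem \ref{Watanabe_characterization_NTSTFPP} componentwise in place of the NTFPP characterization. Your additional check that the $L^2$/uniform-integrability bound still closes under the composed time change is a sensible verification of the paper's omitted details (though note the bound $\Lambda_j(D_{\beta,\theta}(Y_\alpha(\tau)))\le\Lambda_j(T)$ needs only the stopping-time constraint and monotonicity of $\Lambda_j$, not finiteness of the subordinator's moments).
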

\begin{proof}
The proof is similar to that of Theorem \ref{Watanabe_characterization_NGFCP} with $Y_{\alpha}(t)$ replaced by $D_{\beta,\theta}(Y_{\alpha}(t))$, where $\{D_{\beta,\theta}(Y_{\alpha}(t))\}_{t\ge 0}$ is independent of $\{\mathcal{M}(t)\}_{t\ge 0}$.
\end{proof}
 A non-homogeneous tempered generalized space fractional counting process (NTGSFCP), the generalized version of NTSFPP, is defined as 
\begin{equation}\label{definition_NTGSFCP}
    \mathcal{M}_{\beta,\theta}(t)=\mathcal{M}\left(D_{\beta,\theta}(t)\right),
\end{equation}
where $\{\mathcal{M}(t)\}_{t\ge 0}$ is a NGCP and $\{D_{\beta,\theta}(t)\}_{t\ge 0}$ is an independent tempered stable subordinator. The martingale characterization of NTGSFCP follows from Theorem \ref{Watanabe_characterization_NTGSTFCP} by taking $\alpha=1$ and noting that $Y_1(t)=t$ a.s.
Next, we investigate the martingale characterization of the mixed fractional extension of a NGCP. The non-homogeneous mixed fractional Poisson process (NMFPP) is defined as
\begin{equation*}
    \mathcal{N}^{\alpha_1,\alpha_2}(t)=\mathcal{N}\left(Y_{\alpha_1,\alpha_2}(t)\right),
\end{equation*}
where $\{\mathcal{N}(t)\}_{t\ge 0}$ is a NPP and $\{Y_{\alpha_1,\alpha_2}(t)\}_{t \ge 0}$ is an independent inverse mixed stable subordinator. The corresponding generalized process, referred to as the non-homogeneous mixed fractional counting process (NMFCP), is defined as
\begin{equation*}\label{definition_NMFCP}
    \mathcal{M}^{\alpha_1,\alpha_2}(t)=\mathcal{M}\left(Y_{\alpha_1,\alpha_2}(t)\right),
\end{equation*}
where $\{\mathcal{M}(t)\}_{t\ge 0}$ is a NGCP and $\{Y_{\alpha_1,\alpha_2}(t)\}_{t\ge 0}$ is an independent inverse mixed stable subordinator. The following theorem presents the martingale characterization of a NMFCP.
\begin{theorem}\label{martingale_characterization_NMFCP}
    The process $\{\mathcal{M}^{\alpha_1,\alpha_2}(t)\}_{t\ge0}$ is a NMFCP if and only if there exist $k$ simple locally finite point processes $\{\mathcal{N}_j^{\alpha_1,\alpha_2}(t)\}_{t\ge0}$, $1\le j\le k$ such that $\mathcal{M}^{\alpha_1,\alpha_2}(t)=\sum_{j=1}^{k}j\,\mathcal{N}_j^{\alpha_1,\alpha_2}(t)$ and $X_j(t):=\mathcal{N}_j^{\alpha_1,\alpha_2}(t)-\Lambda_j(Y_{\alpha_1,\alpha_2}(t))$, $t\ge0$, is a right-continuous $\{\mathcal{F}_t^j\}_{t\ge 0}$-martingale, where $\Lambda_j(t)=\int_0^t \lambda_j(u)\,du<\infty$ for some rate function $\lambda_j:[0,\infty)\to[0,\infty)$, $\{Y_{\alpha_1,\alpha_2}\}_{t\ge 0}$ is an mixed inverse stable subordinator and  $\mathcal{F}_t^j=\sigma(\mathcal{N}_j^{\alpha_1,\alpha_2}(s):s\le t)\vee\sigma(Y_{\alpha_1,\alpha_2}(s):s\le t)$. Moreover, for any $T>0$, $\{X_j(\tau): \tau \text{ is a stopping time with } Y_{\alpha_1,\alpha_2}(\tau)\le T\}$ is uniformly integrable.
\end{theorem}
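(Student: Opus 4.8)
The plan is to follow the two-step strategy used for the NGFCP (Theorem \ref{Watanabe_characterization_NGFCP}) and its tempered analogue (Theorem \ref{Watanabe_characterization_NTGSTFCP}), replacing the inverse stable subordinator $\{Y_{\alpha}(t)\}_{t\ge 0}$ throughout by the inverse mixed stable subordinator $\{Y_{\alpha_1,\alpha_2}(t)\}_{t\ge 0}$. First I would establish a single-arrival Watanabe characterization for the NMFPP $\mathcal{N}^{\alpha_1,\alpha_2}(t)=\mathcal{N}(Y_{\alpha_1,\alpha_2}(t))$, namely the mixed-fractional counterpart of Theorem \ref{Watanabe_characterization_NFPP}, and then lift it to the generalized process through the decomposition $\mathcal{M}=\sum_{j=1}^{k}j\,\mathcal{N}_j$ of Theorem \ref{ngcp_npp_equality}. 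The reason the single-arrival result transfers verbatim is that $\{Y_{\alpha_1,\alpha_2}(t)\}_{t\ge 0}$ shares the two structural features of $\{Y_{\alpha}(t)\}_{t\ge 0}$ that the argument actually uses: as the first-passage time of the strictly increasing pure-jump subordinator $\{L_{\alpha_1,\alpha_2}(t)\}_{t\ge 0}$ (which has infinite L\'evy measure), it is continuous and non-decreasing, and it is taken independent of the base NPP.

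For the forward direction of the single-arrival statement I would set $X(t)=\mathcal{N}^{\alpha_1,\alpha_2}(t)-\Lambda(Y_{\alpha_1,\alpha_2}(t))$ and condition on $Y_{\alpha_1,\alpha_2}(t)$. Since $\mathcal{N}(Y_{\alpha_1,\alpha_2}(t))\mid Y_{\alpha_1,\alpha_2}(t)\sim\mathrm{Poi}(\Lambda(Y_{\alpha_1,\alpha_2}(t)))$ and the compensated Poisson process is a martingale, the tower property gives $\mathbb{E}[X(t)\mid\mathcal{F}_s]=X(s)$, while right-continuity is inherited from $\mathcal{N}$ and $Y_{\alpha_1,\alpha_2}$. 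Uniform integrability of the family $\{X(\tau):Y_{\alpha_1,\alpha_2}(\tau)\le T\}$ then follows from $L^2$-boundedness: by the law of total variance together with $\mathbb{E}[X(\tau)]=0$ one obtains $\mathbb{E}[X^2(\tau)]=\mathbb{E}[\Lambda(Y_{\alpha_1,\alpha_2}(\tau))]\le\Lambda(T)$, using the monotonicity of $\Lambda$ and the constraint $Y_{\alpha_1,\alpha_2}(\tau)\le T$. This is exactly the computation carried out in Theorem \ref{Watanabe_characterization_NFPP}, now with $Y_{\alpha_1,\alpha_2}$ in place of $Y_{\alpha}$.

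For the converse of the single-arrival statement I would introduce $Z(t)=\inf\{s:Y_{\alpha_1,\alpha_2}(s)\ge t\}$, the right-inverse of $Y_{\alpha_1,\alpha_2}$; these form a family of stopping times, so Lemma \ref{martingale_lemma} shows that $X(Z(t))=K(Z(t))-\Lambda(Y_{\alpha_1,\alpha_2}(Z(t)))$ is still a martingale. Continuity of $Y_{\alpha_1,\alpha_2}$ yields $Y_{\alpha_1,\alpha_2}(Z(t))=t$, so $\{K(Z(t))-\Lambda(t)\}_{t\ge 0}$ is a martingale; as $Z$ is increasing, $K(Z(t))$ is a simple point process, and Theorem \ref{watanabe_npp} identifies it as a NPP. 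Defining $\mathcal{N}(t):=K(Z(t))$ then exhibits $K(t)=\mathcal{N}(Y_{\alpha_1,\alpha_2}(t))$ as a NMFPP, which completes the single-arrival characterization.

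Finally I would assemble the NMFCP statement. In the forward direction, Theorem \ref{ngcp_npp_equality} writes the underlying NGCP as $\mathcal{M}=\sum_{j=1}^{k}j\,\mathcal{N}_j$ with independent NPPs $\mathcal{N}_j$; time-changing by the common $Y_{\alpha_1,\alpha_2}$ gives $\mathcal{M}^{\alpha_1,\alpha_2}=\sum_{j=1}^{k}j\,\mathcal{N}_j^{\alpha_1,\alpha_2}$, and the single-arrival result supplies the martingale and uniform-integrability properties of each $X_j$. Conversely, given the decomposition together with the stated martingale and uniform-integrability conditions on each $X_j$, the single-arrival characterization identifies every $\mathcal{N}_j^{\alpha_1,\alpha_2}$ as a NMFPP, so that $\mathcal{N}_j^{\alpha_1,\alpha_2}(t)=\mathcal{N}_j(Y_{\alpha_1,\alpha_2}(t))$ for independent NPPs $\mathcal{N}_j$, whence $\mathcal{M}^{\alpha_1,\alpha_2}(t)=\mathcal{M}(Y_{\alpha_1,\alpha_2}(t))$ with $\mathcal{M}=\sum_{j=1}^{k}j\,\mathcal{N}_j$ a NGCP by Theorem \ref{ngcp_npp_equality}; this is a NMFCP by definition. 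The main obstacle is the structural verification that $\{Y_{\alpha_1,\alpha_2}(t)\}_{t\ge 0}$ is continuous and non-decreasing, since this is what legitimizes both the conditioning identity $\mathcal{N}(Y_{\alpha_1,\alpha_2}(t))\mid Y_{\alpha_1,\alpha_2}(t)\sim\mathrm{Poi}(\Lambda(Y_{\alpha_1,\alpha_2}(t)))$ and the right-inverse identity $Y_{\alpha_1,\alpha_2}(Z(t))=t$ of the converse. This compensated route is unavoidable here because the common time change destroys independence among the $\mathcal{N}_j^{\alpha_1,\alpha_2}$, ruling out the generating-function argument of Theorem \ref{martingale_characterization_exponential_ngcp}.
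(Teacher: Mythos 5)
Your proposal is correct and follows essentially the same route as the paper: the paper's proof of this theorem is simply the proof of Theorem \ref{Watanabe_characterization_NGFCP} with $Y_{\alpha}(t)$ replaced by $Y_{\alpha_1,\alpha_2}(t)$, i.e.\ a single-arrival Watanabe characterization in the style of Theorem \ref{Watanabe_characterization_NFPP} lifted to the generalized process via the decomposition of Theorem \ref{ngcp_npp_equality}, which is exactly what you carry out. Your explicit verification that the inverse mixed stable subordinator is continuous and non-decreasing (justifying the conditioning step and the right-inverse identity) is a detail the paper leaves implicit, but it is the same argument.
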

\begin{proof}
    The proof is similar to that of Theorem \ref{Watanabe_characterization_NGFCP} with $Y_{\alpha}(t)$ replaced by $Y_{\alpha_1,\alpha_2}(t)$ and hence omitted.
\end{proof}
Finally, we obtain the martingale characterizations of Skellam processes. In general, a Skellam process is defined as the difference of two idependent counting processes. The non-homogeneous generalized Skellam process (NGSP) is defined as the difference of two independent NGCPs. Specifically, let $\{\mathcal{M}_{1}(t)\}_{t \ge 0}$ and $\{\mathcal{M}_{2}(t)\}_{t \ge 0}$ be two independent NGCPs with respective intensity functions $\lambda_{j} : [0,\infty) \rightarrow [0,\infty)$ and $\mu_{j} : [0,\infty) \rightarrow [0,\infty)$ for $j = 1, 2, \ldots, k$, and with initial conditions $\mathcal{M}_{1}(0) = \mathcal{M}_{2}(0) = 0$. Then the NGSP is given by $\{\mathcalboondox{S}(t) = \mathcal{M}_{1}(t) - \mathcal{M}_{2}(t)\}_{t\ge 0}$ whose martingale characterization is given as follows.

\begin{theorem}\label{Watanabe_characterization_NGSP}
Let $\{\mathcal{S}_j(t)\}_{t\ge 0}$ be $k$ independent Skellam processes with parameters $\lambda_j(t)$ and $\mu_j(t)$ for $j=1,2,\dots,k$ such that $\mathcal{S}_j(0)=0$. Then $\{\sum_{j=1}^{k}j\mathcal{S}_j(t)-\sum_{j=1}^{k}j(\Lambda_j(t)-T_j(t)\}_{t\ge 0}$, where $\Lambda_j(t)=\int_{o}^{t}\lambda_j(u)du$ and $T_j(t)=\int_{o}^{t}\mu_j(u)du$, is a $\mathcal{F}_t$-martingale for $\mathcal{F}_t=\sigma(\mathcal{S}_1(t),\dots,\mathcal{S}_k(t))$ if and only if the weighted process $\{\sum_{j=1}^{k}j\mathcal{S}_j(t)\}_{t\ge 0}$ is a NGSP.
\end{theorem}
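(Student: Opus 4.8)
The plan is to mirror the proof of Theorem \ref{martingale_characterization_compensated_ngcp}, establishing the Skellam analogues of the compensated--exponential equivalence (Theorem \ref{martingale_equivalence_ngcp}) and of the exponential characterization (Theorem \ref{martingale_characterization_exponential_ngcp}). Writing each $\mathcal{S}_j(t)=\mathcal{N}_j^{+}(t)-\mathcal{N}_j^{-}(t)$ as the difference of its up-jump ($+1$) and down-jump ($-1$) counting processes with rates $\lambda_j$ and $\mu_j$, the weighted sum becomes $\sum_{j=1}^{k}j\mathcal{S}_j(t)=\sum_{j=1}^{k}j\mathcal{N}_j^{+}(t)-\sum_{j=1}^{k}j\mathcal{N}_j^{-}(t)$, so that by Theorem \ref{ngcp_npp_equality} the process is a NGSP precisely when $\{\sum_j j\mathcal{N}_j^{+}(t)\}$ and $\{\sum_j j\mathcal{N}_j^{-}(t)\}$ are independent NGCPs. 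The exponential object I would work with is
\begin{equation*}
Y(t)=\exp\Big\{\sum_{j=1}^{k}uj\,\mathcal{S}_j(t)-\sum_{j=1}^{k}\big[\Lambda_j(t)(e^{uj}-1)+T_j(t)(e^{-uj}-1)\big]\Big\},
\end{equation*}
whose logarithmic derivative at $u=0$ is exactly the compensated process appearing in the statement.

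For the \emph{forward} direction, suppose $\{\sum_j j\mathcal{S}_j(t)\}$ is a NGSP. As the difference of two independent NGCPs it has independent increments and mean $\mathbb{E}[\sum_j j\mathcal{S}_j(t)]=\sum_j j(\Lambda_j(t)-T_j(t))$. I would then compute, for $0\le s<t$, the quantity $\mathbb{E}[\sum_j j\mathcal{S}_j(t)-\sum_j j(\Lambda_j(t)-T_j(t))\mid\mathcal{F}_s]$ by splitting off the increment $\sum_j j(\mathcal{S}_j(t)-\mathcal{S}_j(s))$, replacing its conditional expectation by the unconditional one via independent increments (this equals $\sum_j j[(\Lambda_j(t)-\Lambda_j(s))-(T_j(t)-T_j(s))]$), and telescoping to recover the value at $s$. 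This is the exact analogue of the direct computation in Theorem \ref{watanabe_npp} and should go through routinely.

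For the \emph{converse}, I would first pass from the compensated martingale to $Y(t)$. One implication is obtained by differentiating $\mathbb{E}[Y(t)\mid\mathcal{F}_s]=Y(s)$ in $u$ at $u=0$, exactly as in Theorem \ref{martingale_equivalence}. The reverse implication requires an It\^o--Doeblin computation in the spirit of Proposition \ref{martingale_proposition}, now with jumps of both signs: applying the jump-process formula to $\exp\{X(t)\}$ yields
\begin{equation*}
Y(t)=1+\sum_{j=1}^{k}(e^{uj}-1)\!\int_0^t Y(s-)\,d\big(\mathcal{N}_j^{+}(s)-\Lambda_j(s)\big)+\sum_{j=1}^{k}(e^{-uj}-1)\!\int_0^t Y(s-)\,d\big(\mathcal{N}_j^{-}(s)-T_j(s)\big),
\end{equation*}
so $Y$ is a martingale once each $\{\mathcal{N}_j^{+}-\Lambda_j\}$ and $\{\mathcal{N}_j^{-}-T_j\}$ is a martingale. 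Granting this, I would run the successive-conditioning argument of Theorem \ref{martingale_characterization_exponential_ngcp}: the martingale identity for $Y$ gives $\mathbb{E}[e^{u\sum_j j(\mathcal{S}_j(t)-\mathcal{S}_j(s))}\mid\mathcal{F}_s]=\exp\{\sum_j[(\Lambda_j(t)-\Lambda_j(s))(e^{uj}-1)+(T_j(t)-T_j(s))(e^{-uj}-1)]\}$, which identifies the increment law as that of a NGSP and, upon iterating the conditioning over a partition $t_0<\cdots<t_n$, yields factorization of the joint generating function, i.e. independent increments. Setting $s=0$ then identifies $\{\sum_j j\mathcal{S}_j(t)\}$ as a NGSP.

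The main obstacle is the step inside the converse where one must recover the \emph{separate} structure of the up- and down-components: the compensated process controls only $\mathcal{N}_j^{+}-\mathcal{N}_j^{-}$ together with the single compensator $\Lambda_j-T_j$, whereas both the It\^o--Doeblin representation of $Y$ and the NGSP conclusion require $\mathcal{N}_j^{+}-\Lambda_j$ and $\mathcal{N}_j^{-}-T_j$ to be martingales individually. The key to closing this gap is that, because $\mathcal{S}_j$ has jumps of size $\pm1$, the sign of each jump distinguishes an up-event from a down-event, so that (the components a.s.\ not jumping simultaneously) the natural filtration $\mathcal{F}_t^{j}=\sigma(\mathcal{S}_j(s),s\le t)$ in fact equals $\sigma(\mathcal{N}_j^{+}(s),\mathcal{N}_j^{-}(s),s\le t)$, and the prescribed rates $\lambda_j,\mu_j$ supply the two component compensators. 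With the components thus separated I would first reduce, exactly as in the first half of the proof of Theorem \ref{martingale_equivalence_ngcp} (the tower property together with the independence of the $\mathcal{S}_i$, $i\ne j$), to the statement that each $\{\mathcal{S}_j-(\Lambda_j-T_j)\}$ is an $\mathcal{F}_t^{j}$-martingale, and then apply a Watanabe-type argument (Theorem \ref{watanabe_npp}) to the up- and down-parts to conclude that $\mathcal{N}_j^{+}$ and $\mathcal{N}_j^{-}$ are independent NPPs with rates $\lambda_j$ and $\mu_j$; Theorem \ref{ngcp_npp_equality} then delivers the NGSP conclusion.
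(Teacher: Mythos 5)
Your overall architecture (forward direction from independent increments; converse via a compensated-to-exponential passage, identification of the increment m.g.f., and successive conditioning) mirrors the paper's, and your forward direction is fine. The genuine problem is in the converse, at exactly the step you flag as ``the main obstacle'': your proposed fix does not close it. Knowing that $\{\mathcal{S}_j(t)-(\Lambda_j(t)-T_j(t))\}_{t\ge 0}$ is an $\mathcal{F}_t^{j}$-martingale pins down only the \emph{difference} of the compensators of $\mathcal{N}_j^{+}$ and $\mathcal{N}_j^{-}$: if $A_j^{\pm}$ denote those compensators, the hypothesis forces $A_j^{+}-A_j^{-}=\Lambda_j-T_j$ and nothing more. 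Your observation that the $\pm 1$ jump signs let one reconstruct $\mathcal{N}_j^{\pm}$ from $\mathcal{S}_j$ (so the filtrations coincide) is correct but does not help: measurability of the components does not make the prescribed rates into their actual compensators --- that is an assumption, not a deduction. Concretely, take $k=1$, $\lambda_1=\mu_1\equiv 1$, and let events arrive with the bounded history-dependent intensity $2\bigl(1+\mathbf{1}\{\mathcal{S}_1(t-)\neq 0\}\bigr)$, each event independently marked $+1$ or $-1$ with probability $1/2$. Then $\mathcal{N}_1^{\pm}$ each have intensity $1+\mathbf{1}\{\mathcal{S}_1(t-)\neq 0\}$, so $\mathcal{S}_1-(\Lambda_1-T_1)=\mathcal{S}_1$ is a true martingale, yet $\mathcal{N}_1^{\pm}$ are neither Poisson nor independent and $\mathcal{S}_1$ does not have independent increments, hence is not a NGSP. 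Consequently the Watanabe-type argument you want to run on each of $\mathcal{N}_j^{+},\mathcal{N}_j^{-}$ never gets started, and your It\^o--Doeblin representation of $Y(t)$ --- whose integrators are exactly $d(\mathcal{N}_j^{+}-\Lambda_j)$ and $d(\mathcal{N}_j^{-}-T_j)$ --- cannot be invoked to conclude that $Y$ is a martingale.

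For comparison, the paper does not close this gap either: its converse simply appeals to ``the equivalence of compensated and exponential martingale forms,'' but that equivalence (Theorems \ref{martingale_equivalence} and \ref{martingale_equivalence_ngcp}) was established only for processes with positive unit jumps, and the exponential compensator $\sum_{j=1}^{k}\bigl[(e^{uj}-1)\Lambda_j(t)+(e^{-uj}-1)T_j(t)\bigr]$ is not a function of $\Lambda_j-T_j$ alone; moreover, the paper's independent-increments step silently reuses the representation of each $\mathcal{S}_j$ as a difference of independent NPPs imported from the forward part of its proof. The missing information must therefore come from how one reads the hypothesis ``Skellam processes with parameters $\lambda_j(t)$ and $\mu_j(t)$'': if it is read as fixing the two component compensators separately (i.e., $\mathcal{N}_j^{+}-\Lambda_j$ and $\mathcal{N}_j^{-}-T_j$ are already martingales), then your It\^o--Doeblin computation, the m.g.f. identification, and the successive conditioning all go through and your proof is complete --- indeed cleaner than the paper's; if it is read more weakly, the ``if'' direction fails outright by the counterexample above, for your argument and the paper's alike.
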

\begin{proof}
Let $\{\mathcal{N}_{1j}(t)\}_{t\ge 0}$ and $\{\mathcal{N}_{2j}(t)\}_{t\ge 0}$, $j=1,\dots,k$, be independent NPPs with rates $\lambda_j(t)$ and $\mu_j(t)$ respectively, and define the non-homogeneous Skellam process (NSP) by $\mathcal{S}_j(t)=\mathcal{N}_{1j}(t)-\mathcal{N}_{2j}(t)$. Since a weighted sum of independent NPPs is a NGCP (see Theorem \ref{ngcp_npp_equality}), the process $\{\sum_{j=1}^{k}j\mathcal{S}_j(t)\}_{t\ge 0}$ is a NGSP. By Theorem~\ref{martingale_characterization_compensated_ngcp}, the compensated processes $\{\sum_{j=1}^{k}j\mathcal{N}_{1j}(t)-\sum_{j=1}^{k}j\Lambda_j(t)\}_{t\ge 0}$ and $\{\sum_{j=1}^{k}j\mathcal{N}_{2j}(t)-\sum_{j=1}^{k}jT_j(t)\}_{t\ge 0}$ are $\mathcal{F}_t$-martingales, and thus their difference $\{\sum_{j=1}^k j\mathcal{S}_j(t)-\sum_{j=1}^k j(\Lambda_j(t)-T_j(t))\}_{t\ge 0}$ is a $\mathcal{F}_t$-martingale. 

Conversely, if $\{\sum_{j=1}^{k}j\mathcal{S}_j(t)-\sum_{j=1}^{k}j(\Lambda_j(t)-T_j(t)\}_{t\ge 0}$ is a $\mathcal{F}_t$-martingale, then by the equivalence of compensated and exponential martingale forms, for every $u\in\mathbb{R}$, the process
$\left\{\exp X(t)\right\}_{t\ge 0}$ is also a $\mathcal{F}_t$-martingale, where $X(t)=u\sum_{j=1}^{k}j\mathcal{S}_j(t)
-\sum_{j=1}^{k}\left[(e^{uj}-1)\Lambda_j(t)+(e^{-uj}-1)T_j(t)\right]$. 
By definition of a martingale, we have
\begin{align}
    &\mathbb{E}\!\left(
    e^{u\sum_{j=1}^{k}j\mathcal{S}_j(t)
    -\sum_{j=1}^{k}\!\left[(e^{uj}-1)\Lambda_j(t)+(e^{-uj}-1)T_j(t)\right]}
    \Big|\mathcal{F}_s
    \right)=
    e^{u\sum_{j=1}^{k}j\mathcal{S}_j(s)
    -\sum_{j=1}^{k}\!\left[(e^{uj}-1)\Lambda_j(s)+(e^{-uj}-1)T_j(s)\right]}\nonumber\\
    \implies&\mathbb{E}\!\left(e^{u\sum_{j=1}^{k}j(\mathcal{S}_j(t)-\mathcal{S}_j(s))}\Big|\mathcal{F}_s\right)=\exp\!\left\{
    \sum_{j=1}^{k}\!\left[(e^{uj}-1)\Lambda_j(s,t)+(e^{-uj}-1)T_j(s,t)\right]
    \right\}\nonumber\\
    \implies&\mathbb{E}\!\left(e^{u\sum_{j=1}^{k}j(\mathcal{S}_j(t)-\mathcal{S}_j(s))}\right)=\exp\!\left\{
    \sum_{j=1}^{k}\!\left[(e^{uj}-1)\Lambda_j(s,t)+(e^{-uj}-1)T_j(s,t)\right]
    \right\}\label{ngsp_conditional_eq}
\end{align}
which is the m.g.f. of a NGSP with parameters $\Lambda_j(s,t)=\Lambda_j(t)-\Lambda_j(s)=\int_{s}^{t}\lambda_j(u)du$ and 
$T_j(s,t)=T_j(t)-T_j(s)=\int_{s}^{t}\mu_j(u)du$. Putting $s=0$ in \eqref{ngsp_conditional_eq}, we conclude that $\{\sum_{j=1}^{k}j\mathcal{S}_j(t)\}_{t\ge 0}$ is equal in distribution to a NGSP with parameters $\Lambda_j(t)$ and $T_j(t)$. 
For $i\in\mathbb{N}$, consider the increment 
\begin{equation*}
\Delta_i\left(\sum_{j=1}^{k}j\mathcal{S}_j(t)\right)
=\sum_{j=1}^{k}j\mathcal{S}_j(t_i)-\sum_{j=1}^{k}j\mathcal{S}_j(t_{i-1})
=\sum_{j=1}^{k}j\left(\mathcal{N}_{1j}(t_i)-\mathcal{N}_{1j}(t_{i-1})\right)
-\sum_{j=1}^{k}j\left(\mathcal{N}_{2j}(t_i)-\mathcal{N}_{2j}(t_{i-1})\right). 
\end{equation*}
Since processes $\{\mathcal{N}_{1j}(t)\}_{t\ge 0}$ are independent, the random variables $\{\Delta_i(\mathcal{N}_{1j}(t))=\mathcal{N}_{1j}(t_i)-\mathcal{N}_{1j}(t_{i-1})\}$ are independent for fixed $j$. Similarly, the random variables $\{\Delta_i(\mathcal{N}_{2j}(t))=\mathcal{N}_{2j}(t_i)-\mathcal{N}_{2j}(t_{i-1})\}$ are independent. Since $\{\mathcal{N}_{1j}(t)\}_{t\ge 0}$ and $\{\mathcal{N}_{2j}(t)\}_{t\ge 0}$ are mutually independent, the collection $\{\Delta_i(\mathcal{N}_{1j}(t)),\Delta_i(\mathcal{N}_{2j}(t)):1\le j\le k\}$ is independent. Moreover $\Delta_i\left(\sum_{j=1}^{k}j\mathcal{S}_j(t)\right)$ being a linear combination of these independent random variables implies the collection $\{\Delta_i(\sum_{j=1}^{k}j\mathcal{S}_j(t)):i\in\mathbb{N}\}$ is independent, that is, the process $\{\sum_{j=1}^{k}j\mathcal{S}_j(t)\}_{t\ge0}$ has independent increments.
 Hence, $\{\sum_{j=1}^{k}j\mathcal{S}_j(t)\}_{t\ge0}$ is a NGSP.
\end{proof}

The following result provides the Watanabe characterization of a non-homogeneous time fractional Skellam process (NFSP) which may be defined as the difference of two independent NTFPPs.  
\begin{theorem}\label{Watanabe_characterization_NFSP}
    Let $\{\mathcal{K}_1(t)\}_{t\ge 0}$ be a simple locally finite point process. Then, $\{\mathcal{K}_1(t)\}_{t\ge 0}$ is a NFSP if and only if there exist rate functions $\lambda:[0,\infty)\to[0,\infty)$ and $\mu:[0,\infty)\to[0,\infty)$ with $\Lambda(t)=\int_{0}^{t}\lambda(u)du<\infty$ and $T(t)=\int_{o}^{t}\mu(u)du<\infty$ and two independent inverse stable subordinators $\{Y_{\alpha_1}(t)\}_{t\ge 0}$ and $\{Y_{\alpha_2}(t)\}_{t\ge 0}$ such that the process 
    \begin{equation}\label{martingale_form_nfsp}
    \{\mathcal{X}(t)\}_{t\ge 0}=\{\mathcal{K}_1(t)-\left(\Lambda(Y_{\alpha_1}(t))-T(Y_{\alpha_2}(t))\right)\}_{t\ge 0}    
    \end{equation} 
    is a right-continuous martingale with respect to the induced filtration $\mathcal{F}_t=\sigma\left(\mathcal{K}_1(s),s\le t\right)\vee\sigma\left(Y_{\alpha_1}(s), s\ge 0\right)\vee\sigma\left(Y_{\alpha_2}(s), s\ge 0\right)$ and for any $T>0$,
    \begin{equation}\label{stopping_times_martingale_nfsp}
        \{\mathcal{X}(\tau),\,\tau\,\, \text{is stopping time such that}\,\,\max\left(Y_{\alpha_1}(\tau),Y_{\alpha_2}(\tau)\right)\le T\}
    \end{equation}
    is uniformly integrable.
\end{theorem}
\begin{proof}
Assume $\{\mathcal{K}_1(t)\}_{t\ge 0}$ is a NFSP. By definition,  $\mathcal{K}_1(t)=\mathcal{N}_1(Y_{\alpha_1}(t))-\mathcal{N}_2(Y_{\alpha_2}(t))$, where $\{\mathcal{N}_1(Y_{\alpha_1}(t))\}_{t\ge0}$ and $\{\mathcal{N}_2(Y_{\alpha_2}(t))\}_{t\ge0}$ are independent NTFPPs with parameters $\Lambda(t)$ and $T(t)$, and $\{Y_{\alpha_1}(t)\}_{t\ge 0}$ and $\{Y_{\alpha_2}(t)\}_{t\ge 0}$ are two independent inverse stable subordinators that are also independent of these processes. 

By Theorem \ref{Watanabe_characterization_NFPP}, $\{\mathcal{N}_1(Y_{\alpha_1}(t))-\Lambda(Y_{\alpha_1}(t))\}_{t\ge 0}$ and $\{\mathcal{N}_2(Y_{\alpha_2}(t))-T(Y_{\alpha_2}(t))\}_{t\ge 0}$
 are $\mathcal{F}_t$-martingales. This implies that $\{\mathcal{X}(t)\}_{t\ge 0}$ is a $\mathcal{F}_t$-martingale since the difference of two martingales with respect to a common filtration is a martingale. Now consider the processes $\{\mathcal{N}_1(Y_{\alpha_1}(\tau))-\Lambda(Y_{\alpha_1}(\tau))\}$ and $\{\mathcal{N}_2(Y_{\alpha_2}(\tau))-T(Y_{\alpha_2}(\tau))\}$ where $\tau$ is a stopping time such that $\max\left(Y_{\alpha_1}(\tau),~Y_{\alpha_2}(\tau)\right)\le T$. By Theorem \ref{Watanabe_characterization_NFPP}, these processes are uniformly integrable. Since uniform integrability is closed under linear combinations, the process given by \eqref{stopping_times_martingale_nfsp} is also uniformly integrable.

 The converse part follows from the converse part in the proof of Theorem \ref{Watanabe_characterization_NFPP} applied to the two NTFPPs $\{\mathcal{N}_1(Y_{\alpha_1}(t))\}_{t\ge 0}$ and $\{\mathcal{N}_2(Y_{\alpha_2}(t))\}_{t\ge 0}$, and then taking their difference.
\end{proof}
If a NFSP is defined by time-changing a non-homogeneous Skellam process (NSP) with an independent inverse stable subordinator, the following result provides its Watanabe characterization.
\begin{theorem}\label{Watanabe_characterization_NFSP-II}
    Let $\{\mathcal{K}_2(t)\}_{t\ge 0}$ be a simple locally finite point process. Then, $\{\mathcal{K}_2(t)\}_{t\ge 0}$ is a NFSP if and only if there exist rate functions $\lambda:[0,\infty)\to[0,\infty)$ and $\mu:[0,\infty)\to[0,\infty)$ with $\Lambda(t)=\int_{0}^{t}\lambda(u)du<\infty$ and $T(t)=\int_{o}^{t}\mu(u)du<\infty$ and an independent inverse stable subordinator $\{Y_{\alpha}(t)\}_{t\ge 0}$ such that the process 
    \begin{equation}\label{martingale_form_nfsp-II}
    \{\mathcal{X}(t)\}_{t\ge 0}=\{\mathcal{K}_2(t)-\left(\Lambda(Y_{\alpha}(t))-T(Y_{\alpha}(t))\right)\}_{t\ge 0}    
    \end{equation} 
    is a right-continuous martingale with respect to the induced filtration $\mathcal{F}_t=\sigma\left(\mathcal{K}_2(s),s\le t\right)\vee\sigma\left(Y_{\alpha}(s), s\ge 0\right)$ and for any $T>0$,
    \begin{equation}\label{stopping_times_martingale_nfsp-II}
        \{\mathcal{X}(\tau),\,\tau\,\, \text{is stopping time such that}\,\,Y_{\alpha}(\tau)\le T\}
    \end{equation}
    is uniformly integrable.
\end{theorem}
\begin{proof}
For a non-homogeneous Skellam process $\{\mathcal{S}(t)\}_{t\ge 0}$, note that $\mathcal{K}_2(t)\big|Y_\alpha(t)=\mathcal{S}\left(Y_\alpha(t)\right)\big|Y_\alpha(t)$ has a non-homogeneous Skellam distribution with mean $\Lambda(Y_\alpha(t)) - T(Y_\alpha(t))$. It can also be shown that $\mathbb{E}(X(\tau))=0$ and $\mathbb{E}\left(X^2(\tau)\right)\le \Lambda(T) + T(T)$. The proof of the result follows in a manner analogous to the proof of Theorem~\ref{Watanabe_characterization_NFPP}.    
\end{proof}

The following result establishes the Watanabe characterization of NGFSP obtained by time-changing a NGSP with an independent inverse stable subordinator.
\begin{proposition}\label{Watanabe_characterization_NGFSP-II}
The process $\{\mathcalboondox{S}^{\alpha}(t)\}_{t\ge 0}$ is a NGFSP if and only if there exist $k$ simple locally finite point processes $\{\mathcal{S}^{\alpha}_{j}(t)\}_{t\ge 0}$, $1\le j\le k$ such that $\mathcalboondox{S}^{\alpha}(t)=\sum_{j=1}^{k}j\mathcal{S}^{\alpha}_{j}(t)$ and $X_j(t)=\mathcal{S}^{\alpha}_{j}(t)-(\Lambda_j(Y_{\alpha}(t))-T_j(Y_{\alpha}(t))$, $t\ge 0$ is a right-continuous $\{\mathcal{F}_t^j\}_{t\ge 0}$-martingale, where $\mathcal{F}_t^j=\sigma\left(\mathcal{S}_j^{\alpha}(s),s\le t\right)\vee\sigma\left(Y_{\alpha}(s),s\ge 0\right)$ and $\{Y_\alpha(t)\}_{t\ge 0}$ is an inverse stable subordinator independent of $\{\mathcal{S}_j(t)\}_{t\ge 0}$. Moreover, for any $T>0$, $\{X_j(\tau),\tau\,\, \text{is stopping time such that}\,\,(Y_{\alpha}(\tau))\le T\}$ is uniformly integrable.
\end{proposition}
\begin{proof}
We know that a NGSP can be expressed as the difference between two independent NGCPs, each of which admits a unique representation as a weighted sum of $k$ independent NPPs by Theorem~\ref{ngcp_npp_equality}. Specifically, the NGSP can be written as $\mathcalboondox{S}(t)=\sum_{j=1}^{k} j\mathcal{S}_j(t)$, where $\mathcal{S}_j(t)=\mathcal{N}_{1j}(t)-\mathcal{N}_{2j}(t)$, with $\{\mathcal{N}_{1j}(t)\}_{t\ge0}$ and $\{\mathcal{N}_{2j}(t)\}_{t\ge0}$ denoting independent NPPs having intensity functions $\lambda_j(t)$ and $\mu_j(t)$ respectively. Then the NGFSP $\{\mathcalboondox{S}^{\alpha}(t)\}_{t\ge 0}$ is given by $\mathcalboondox{S}^{\alpha}(t)=\mathcalboondox{S}(Y_{\alpha}(t))=\sum_{j=1}^{k} j\mathcal{S}_j(Y_{\alpha}(t))=\sum_{j=1}^{k} j\mathcal{S}^{\alpha}_j(t)$, where $\{\mathcal{S}_j^{\alpha}(t)\}_{t\ge 0}$ is a NFSP. Therefore for each $\{\mathcal{S}_j^{\alpha}(t)\}_{t\ge 0}$, it follows from Theorem~\ref{Watanabe_characterization_NFSP-II} that the process $\{X_j(t)=\mathcal{S}^{\alpha}_{j}(t)-(\Lambda_j(Y_{\alpha}(t)) - T_j(Y_{\alpha}(t)))\}_{t\ge 0}$ is a right-continuous martingale with respect to the filtration $\mathcal{F}_t^j=\sigma(\mathcal{S}^{\alpha}_{j}(s),s\le t)\vee\sigma(Y_{\alpha}(s),s\ge0)$. The uniform integrability of $\{X_j(\tau)\}$, for stopping times $\tau$ satisfying $Y_{\alpha}(\tau)\le T$, follows from the corresponding property established in Theorem~\ref{Watanabe_characterization_NFSP-II}.

Conversely, suppose there exist simple locally finite point processes $\{\mathcal{S}^{\alpha}_{j}(t)\}_{t\ge0}$ and an independent inverse stable subordinator $\{Y_{\alpha}(t)\}_{t\ge 0}$ such that each $\{X_j(t)=\mathcal{S}^{\alpha}_{j}(t)-(\Lambda_j(Y_{\alpha}(t))-T_j(Y_{\alpha}(t)))\}_{t\ge 0}$ is a right-continuous $\mathcal{F}_t^j$-martingale. Then by Theorem~\ref{Watanabe_characterization_NFSP-II}, each component process $\{\mathcal{S}^{\alpha}_{j}(t)\}_{t\ge 0}$ is a NFSP. Consequently, their weighted sum $\{\mathcalboondox{S}^{\alpha}(t)=\sum_{j=1}^{k} j\,\mathcal{S}^{\alpha}_{j}(t)\}_{t\ge 0}$ is a NGFSP. This completes the proof.
\end{proof}
If the NGFSP is obtained by taking the difference of two independent NGFCPs, the following result provides its martingale characterization.

\begin{proposition}\label{Watanabe_characterization_NGFSP}
The process $\{\mathcalboondox{S}^{\alpha_1,\alpha_2}(t)\}_{t\ge 0}$ is a NGFSP if and only if there exist $k$ simple locally finite point processes $\{\mathcal{S}^{\alpha_1,\alpha_2}_{j}(t)\}_{t\ge 0}$, $1\le j\le k$ such that $\mathcalboondox{S}^{\alpha_1,\alpha_2}(t)=\sum_{j=1}^{k}j\mathcal{S}^{\alpha_1,\alpha_2}_{j}(t)$ and $X_j(t)=\mathcal{S}^{\alpha_1,\alpha_2}_{j}(t)-(\Lambda_j(Y_{\alpha_1}(t))-T_j(Y_{\alpha_2}(t))$, $t\ge 0$ is a right continuous $\{\mathcal{F}_t^j\}_{t\ge 0}$-martingale, where $\mathcal{F}_t^j=\sigma\left(\mathcal{S}_j^{\alpha_1,\alpha_2}(s),s\le t\right)\vee\sigma\left(Y_{\alpha_1}(s),s\ge 0\right)\vee\sigma\left(Y_{\alpha_2}(s),s\ge 0\right)$, and $\{Y_{\alpha_1}(t)\}_{t\ge 0}$ and $\{Y_{\alpha_2}(t)\}_{t\ge 0}$ are two independent inverse stable subordinators independent of $\{S_j(t)\}_{t\ge 0}$. Moreover, for any $T>0$, $\{X_j(\tau),\tau\,\, \text{is stopping time such that}\\\,\,\max\left(Y_{\alpha_1}(\tau),Y_{\alpha_2}(\tau)\right)\le T\}$ is uniformly integrable.
\end{proposition}
\begin{proof}
Let $\{\mathcal{M}_{1}^{\alpha_1}(t)\}_{t\ge 0}$ and $\{\mathcal{M}_{2}^{\alpha_2}(t)\}_{t\ge 0}$ be two independent NGFCPs. By definition, we have $\mathcalboondox{S}^{\alpha_1,\alpha_2}(t)=\mathcal{M}_{1}(Y_{\alpha_1}(t))-\mathcal{M}_{2}(Y_{\alpha_2}(t))=\sum_{j=1}^{k}j\left(\mathcal{N}_{1j}(Y_{\alpha_1}(t))-\mathcal{N}_{2j}(Y_{\alpha_2}(t))\right)=\sum_{j=1}^{k}j(\mathcal{N}_{1j}^{\alpha_1}(t)-\mathcal{N}_{2j}^{\alpha_2}(t))=\sum_{j=1}^{k}j\mathcal{S}_{j}^{\alpha_1,\alpha_2}(t)$, where $\{\mathcal{S}_{j}^{\alpha_1,\alpha_2}(t)\}_{t\ge 0}$ is a NFSP. The result follows from Theorem~\ref{Watanabe_characterization_NFSP} together with the fact that a weighted sum of martingales (here $\{\mathcal{S}_{j}^{\alpha_1,\alpha_2}(t)\}_{t\ge 0}$) with respect to a common filtration (here $\{\mathcal{F}_t^j\}_{t\ge 0}$) is a martingale.
\end{proof}
Next, consider the non-homogeneous tempered generalized space-time fractional Skellam process (NTGSTFSP) and its special case NTGSFSP. If they are defined via time-change, then the martingale characterizations of NTGSTFSP and NTGSFSP follow from Proposition ~\ref{Watanabe_characterization_NGFSP-II} by replacing $Y_\alpha(t)$ with $D_{\beta,\theta}(Y_\alpha(t))$ and $D_{\beta,\theta}(t))$ respectively. However, if they are defined as the difference of two independent counting processes, then the martingale characterizations of NTGSTFSP and NTGSFSP follow from Proposition ~\ref{Watanabe_characterization_NGFSP} by replacing $Y_{\alpha_i}(t)$ with $D_{\beta_i,\theta_i}(Y_{\alpha_i}(t))$ and $D_{\beta_i,\theta_i}(t))$ for $i=1,2$ respectively. The non-homogeneous mixed fractional Skellam process (NMFSP) admits an analogous characterization when the process is defined via an inverse mixed stable subordinator.
Finally, the martingale properties of NGSP, NGFSP, NMFCP, NMFSP, NTGSFCP, NTGSTFCP, NTGSFSP and NTGSTFSP can be obtained from their respective martingale characterizations similar to Corollary \ref{martingale_corollary_NGFCP}.

\section*{Declarations}
\noindent 
{\bf Conflict of Interest}. The authors declare that they have no conflict of interest.

\noindent
{\bf Data availability}. No data was used for the research described in this article.

\bibliographystyle{plainnat}
\bibliography{NGFSP}

\end{document}